\theoremstyle{plain}
\newtheorem{thm}{Theorem}[section]
\newtheorem{prop}[thm]{Proposition}
\newtheorem{lemma}[thm]{Lemma}
\newtheorem{cor}[thm]{Corollary}
\theoremstyle{definition}
\newtheorem{dfn}[thm]{Definition}
\newtheorem{hypo}[thm]{Hypothesis}
\theoremstyle{remark}
\newtheorem{rem}[thm]{Remark}
\newcommand{\HH}{\mathrm{H}}
\newcommand{\D}{\mathrm{D}}
\newcommand{\SD}{\mathrm{SD}}
\newcommand{\Q}{\mathrm{Q}}
 \newcommand{\unter}[2]{\genfrac{}{}{0pt}{}{#1}{#2}}
\begin{document}

\title[Brauer's work and universal deformation rings]{Brauer's generalized decomposition numbers and universal deformation rings}

\author{Frauke M. Bleher}
\thanks{The author was supported in part by  
NSA Grant H98230-11-1-0131.}
\subjclass[2000]{Primary 20C20; Secondary 20C15, 16G10}
\keywords{Universal deformation rings; Brauer's generalized decomposition numbers;
tame blocks; dihedral defect groups; semidihedral defect groups; 
generalized quaternion defect groups}

\begin{abstract}
The versal deformation ring $R(G,V)$ of a mod $p$ representation $V$ of a profinite group
$G$ encodes all isomorphism classes of lifts of $V$ to representations of $G$ over complete local 
commutative Noetherian
rings.  We introduce a new technique for determining $R(G,V)$ when $G$ is finite which 
involves Brauer's generalized decomposition numbers.
\end{abstract}

\maketitle


\section{Introduction}
\label{s:intro}

Let $k$ be an algebraically closed field of positive characteristic $p$, let $W=W(k)$ be the
ring of infinite Witt vectors over $k$, and let $G$ be a finite group. 
An important question in the representation theory of $G$ is whether a finitely generated 
$kG$-module $V$ can be lifted to $W$.  For example, Green's lifting theorem 
shows that this is possible if there are no non-trivial $2$-extensions of $V$ by itself. 
A natural generalization of this question is to consider the functor  which sends 
each complete local commutative Noetherian ring $R$ with residue field $k$ to the set of
isomorphism classes of lifts of $V$ over $R$.  If this functor is represented by
a ring $R(G,V)$, we say that $R(G,V)$ is the universal deformation ring of $V$.
More generally, one can always associate to $V$ a versal deformation ring
$R(G,V)$, whose precise definition is recalled in Section \ref{s:prelim}.  
It was shown in \cite[Prop. 2.1]{bc} that if the stable endomorphism ring 
$\underline{\mathrm{End}}_{kG}(V)$ is isomorphic to $k$, then
the versal deformation ring $R(G,V)$ is always universal.

Apart from the fact that universal deformation rings of representations of finite groups
give more insight into the representation theory of these finite groups, there is another
important motivation for studying these universal deformation rings. 
Namely,
universal deformation rings for finite groups 
provide a good test case for various conjectures concerning the ring theoretic properties
of universal deformation rings for profinite Galois groups. 
For example, Flach asked whether there could be universal deformation rings which 
are not complete intersections (see \cite{flach}). 
In \cite{bc4.9,bc5} it was shown that the universal deformation ring of the non-trivial
irreducible mod $2$ representation of the symmetric group $S_4$ is not a complete intersection;
in fact it is not even Cohen-Macaulay.
This led to infinitely many examples of real quadratic fields $L$ such that the universal deformation 
ring of the inflation of this representation to the  Galois group over $L$ of the 
maximal totally unramified extension of $L$ is not a complete intersection.
In \cite{bcs}, examples of finite groups $G$ and mod $p$ representations of $G$
for every odd prime number $p$ were constructed such that the universal deformation rings of 
these representations are not complete intersections.
The main advantage of computing universal deformation rings for representations of finite groups
is that one can use deep results from modular representation theory due to Brauer, 
Erdmann \cite{erd}, Linckelmann \cite{linckel,linckel1}, Carlson-Th\'{e}venaz \cite{carl2,carl1.5}, 
and others. 

In this paper, we propose a new method of determining universal deformation rings, using Brauer's
generalized decomposition numbers. Brauer generalized the usual decomposition numbers 
in \cite{brauerordinaryandmodular}
to be able to express the values of ordinary irreducible characters not only on $p$-regular elements
but also on $p$-singular elements
in terms of Brauer characters. We will show how to use these generalized decomposition numbers to
determine the universal deformation rings $R(G,V)$ for certain mod $p$ representations $V$ of finite groups 
$G$ whose stable endomorphisms are all given by scalars. 
The $V$ we consider are those for which Brauer's generalized decomposition numbers 
carry the most information;  these $V$ are called maximally ordinary below. 
For maximally ordinary $V$, the generalized decomposition numbers 
enable us to find a family of Galois orbits of ordinary irreducible characters of $G$
which can be used to construct lifts of $V$
to large local rings in characteristic 0. This provides a powerful tool for computing universal
deformation rings. 
The need for the use of Brauer's generalized decomposition numbers is related to how much
fusion of conjugacy classes of $p$-power order elements occurs in $G$.

Suppose $V$ is a $kG$-module whose stable endomorphism ring is isomorphic to $k$. Then
$V$ has a unique non-projective indecomposable summand (up to isomorphism). Since 
the universal deformation ring $R(G,V)$ only depends on this indecomposable direct summand
by \cite[Cor. 2.7]{bc},
we may assume that $V$ is indecomposable. Hence there exists a unique $p$-block $B$ to which
$V$ belongs. The case when $B$ has finite representation type has been 
fully studied in \cite{bc}. 
Therefore, we concentrate in this paper on the case when $B$ has infinite tame 
representation type. Because the study of local blocks requires different arguments, as was demonstrated 
for example in \cite{diloc}, we further assume that $B$ is not local. 
Let $D$ be a defect group of $B$.
We say $V$ is 
maximally ordinary
if the Brauer character of $V$ is the restriction of an ordinary irreducible character $\chi$ 
such that for every $\sigma\in D$ of maximal $p$-power order,
Brauer's generalized decomposition numbers corresponding to $\sigma$ and $\chi$
do not all lie in $\{0,\pm 1\}$ (see Definition \ref{def:maximallyordinary}).
This condition ensures that Brauer's generalized decomposition numbers carry enough 
information for our method to be applied to $V$.

The following theorem summarizes our main results;  more precise statements can be found in 
Sect. \ref{s:udr}, and in particular in Corollary \ref{cor:biglist} and Theorem \ref{thm:main}.

\begin{thm}
\label{thm:supermainNEW}
Suppose $G$ is a finite group, $B$ is a non-local block of $kG$ of infinite tame representation type,
and $D$ is a defect group of $B$ of order $p^n$. There exists an indecomposable $kG$-module 
$V$ belonging to $B$ whose stable endomorphism ring is isomorphic to $k$ and which
is maximally ordinary
if and only if $n\ge 4$. 
Moreover, the isomorphism class of every such $V$ can be described explicitly.
Suppose $V$ is such a maximally ordinary module. 
There exists a monic polynomial $q_n(t)\in W[t]$ of degree $p^{n-2}-1$
which depends on $D$ but not on $V$ and which can be given explicitly
such that either
\begin{enumerate}
\item[(i)] $R(G,V)/pR(G,V)\cong k[[t]]/(t^{p^{n-2}-1})$, in which case $R(G,V)\cong W[[t]]/(q_n(t))$, or
\item[(ii)] $R(G,V)/pR(G,V)\cong k[[t]]/(t^{p^{n-2}})$, in which case  $R(G,V)\cong
W[[t]]/(t\,q_n(t),p\,q_n(t))$.
\end{enumerate}
In all cases, the ring $R(G,V)$ is isomorphic to a subquotient ring of $WD$, and it is
a complete intersection if and only if we are in case $(i)$.
\end{thm}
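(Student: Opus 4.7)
The overall plan breaks into three phases: classifying the maximally ordinary modules, computing $R(G,V)/pR(G,V)$, and then lifting to identify $R(G,V)$ via Brauer's generalized decomposition numbers. Since $B$ has infinite tame representation type, $p=2$ and $D$ is dihedral, semidihedral, or generalized quaternion of order $2^n$. I would invoke Erdmann's classification \cite{erd} to describe the basic algebra of $B$ by quiver and relations, and use the resulting description of string and band modules to enumerate all indecomposable $V$ with $\underline{\mathrm{End}}_{kG}(V)\cong k$. For each such $V$, I would locate the ordinary irreducible characters $\chi$ whose restriction to the $2$-regular elements equals the Brauer character of $V$; these are read off from the decomposition matrix of $B$, known explicitly in each of Erdmann's cases.

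Next, for each candidate pair $(V,\chi)$ and each $\sigma\in D$ of maximal order $2^{n-1}$, I would compute the generalized decomposition numbers $d^\sigma_{\chi,\phi}$ by restricting $\chi$ to the $2$-regular elements of $C_G(\sigma)$ and expanding in the Brauer characters $\phi$ of the Brauer correspondent of $B$ in $C_G(\sigma)$. A direct case-by-case computation using Erdmann's lists should show that these numbers all lie in $\{0,\pm 1\}$ precisely when $n\leq 3$: for $n\geq 4$ the cyclic subgroup $\langle\sigma\rangle$ is large enough that character values mix several $2^{n-1}$th roots of unity nontrivially, forcing some $d^\sigma_{\chi,\phi}$ to grow; for $n\leq 3$ the maximal cyclic subgroup of $D$ has order at most $4$ and the tame block structure is too rigid. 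This establishes both the "if and only if" statement and, at the same time, the explicit description of $V$.

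The mod $p$ reduction $R(G,V)/pR(G,V)$ is then determined by the self-extensions of $V$ computed via the quiver presentation together with the techniques of Carlson--Th\'evenaz \cite{carl2,carl1.5}; cases (i) and (ii) correspond to whether or not the socle of $V$ contributes a self-extension that fails to lift to characteristic zero. To lift to $W$, I would form a $WG$-lift whose ordinary character is the Galois orbit sum of the $\chi$ identified in the previous phase; the generalized decomposition numbers then force the deformation parameter $t$ to satisfy a distinguished polynomial relation, which I would take as the definition of $q_n(t)\in W[t]$. Weierstrass preparation applied to this relation, together with the mod $p$ count, then pins down $R(G,V)$ in both cases as claimed.

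The principal obstacle will be proving that $q_n(t)$ depends only on $D$ and not on $(G,V)$, and identifying $R(G,V)$ as a subquotient of $WD$. The key observation is that $q_n(t)$ can be computed intrinsically from the action of $\sigma$ on the lift, which factors through the Brauer correspondent inside the local group ring $WD$, so the local block already carries the data defining $q_n$; this gives both the uniformity in $(G,V)$ and the subquotient-of-$WD$ description. Finally, the complete intersection dichotomy is a formal consequence of the presentations: in case (i), $R(G,V)$ is a hypersurface quotient of the regular local ring $W[[t]]$, hence a complete intersection; in case (ii), the ideal $(t\,q_n(t),p\,q_n(t))$ has height $1$ in $W[[t]]$ but requires two generators, so $R(G,V)$ fails to be a complete intersection.
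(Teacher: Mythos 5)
Your overall architecture matches the paper's: Erdmann's classification plus decomposition matrices to enumerate the candidate $V$, Brauer--Olsson's formulas for the generalized decomposition numbers at elements of order $2^{n-1}$ to get the $n\ge 4$ dichotomy, a characteristic-zero lift $U'$ whose character is the Galois orbit sum $\sum_{i}\chi_{5,i}$, and class sums acting on $U'$ to produce an endomorphism generating $W[[t]]/(q_n(t))$. Two points in your plan are genuine gaps rather than compressions.

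First, and most seriously, in case (ii) your ``Weierstrass preparation together with the mod $p$ count'' cannot pin down $R(G,V)$. Both $W[[t]]/(t\,q_n(t),2\,q_n(t))$ and the complete intersection $W[[t]]/(q_n(t)(t-2\mu))$ (any $\mu\in W$) reduce mod $2$ to $k[[t]]/(t^{2^{n-2}})$ and both surject onto $W[[t]]/(q_n(t))$, so the surjection $R(G,V)\to R'$ coming from the lift $U'$ plus the mod-$2$ computation leaves both possibilities open. The paper excludes the complete intersection option by showing that the universal mod-$2$ deformation $\overline{U}$ of $V$ lies at the boundary of a $3$-tube of the stable Auslander--Reiten quiver and has $R(G,\overline{U})\cong k$ (Proposition \ref{prop:3tubes}), which forbids lifting $\overline{U}$ to $W/4$ or $W$; that proposition in turn needs Green correspondence to the normalizer of a Klein four vertex and the structure of the corresponding block of $kN_G(K)$. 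Nothing in your outline supplies this input, and without it the dichotomy (i)/(ii) and the failure of the complete intersection property in case (ii) are not established. Relatedly, your criterion for when case (ii) occurs (``a self-extension that fails to lift'') is not the right invariant; the correct one is whether $V$ corresponds to a $3$-tube, determined case by case from the quiver presentation, and Carlson--Th\'evenaz plays no role here.

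Second, identifying $\mathrm{End}_{WG}(U')$ with $W[[t]]/(q_n(t))$ and proving $U'$ is free over it is not automatic from ``the generalized decomposition numbers force $t$ to satisfy $q_n$.'' A priori the class-sum endomorphism only generates a subalgebra $W[\alpha]\subseteq\mathrm{End}_{WG}(U')$; equality requires computing $\mathrm{End}_{kG}(\overline{U'})\cong k[t]/(t^{2^{n-2}-1})$ and a ramification argument (since $\sqrt{\pm 2}$ lies in the maximal ideal of $W[\nu_{n-1}]$, one shows the linear coefficient relating $\alpha$ to a generator $\beta$ is a unit). Moreover, the endomorphism must be taken as a $W$-linear combination of \emph{several} class sums $t(C_j)$ determined by equation (\ref{eq:duhh2}); using the single class sum of $\sigma$ (``the action of $\sigma$ on the lift'') is exactly the error in the earlier literature that this paper corrects, because non-principal blocks can have extra fusion of $D$-conjugacy classes. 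Your closing argument for the complete intersection dichotomy, granting the presentations, is correct.
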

In particular, Theorem \ref{thm:supermainNEW} gives a positive answer to \cite[Question 1.1]{bc} 
for all $B$ and $V$ considered in the theorem.
A precise description of the maximally ordinary modules $V$ belonging to $B$ is given in 
Corollary \ref{cor:biglist}.
A formula for the polynomials $q_n(t)$ can be found
in Definition \ref{def:seemtoneed} and Remark \ref{rem:ohyeah}.

The use of Brauer's generalized decomposition numbers provides a correction to some arguments
in \cite[Sect. 3.4]{3sim}, \cite[Sect. 5]{3quat}, and \cite[Sect. 3.2]{2sim}. Namely, in these papers a 
formula for the values of the ordinary irreducible characters on elements
in $D$ of maximal $p$-power order was assumed, which is true for principal infinite tame blocks but
cannot be verified for arbitrary infinite tame blocks because there may be more fusion
of $D$-conjugacy classes in $G$ when the blocks are not principal 
(see Remark \ref{rem:principal}).

The paper is organized as follows. In Section \ref{s:prelim}, we recall the definitions of deformations and 
deformation rings.
In Section \ref{s:braueristheman}, we give a brief introduction to Brauer's generalized decomposition 
numbers, as introduced in \cite{brauerordinaryandmodular,brauerdarst2}.
Using \cite{erd}, we descibe in Section \ref{s:tame} the quivers and relations of the basic algebras
of all non-local blocks $B$ of infinite tame representation type,
and provide their decomposition matrices. 
In Section \ref{s:ordinarytame}, we describe results of 
\cite{brauer2,olsson}
about the ordinary irreducible characters of $G$ belonging to $B$.
In Section \ref{s:udr}, we prove Theorem \ref{thm:supermainNEW}.


\section{Preliminaries}
\label{s:prelim}

In this section, we give a brief introduction to versal and universal deformation rings and deformations. 
For more background material, we refer the reader to \cite{maz1} and \cite{lendesmit}.

Let $k$ be an algebraically closed field of characteristic $p>0$, and let $W$ be the ring of infinite 
Witt vectors over $k$.
Let $\hat{\mathcal{C}}$ be the category of all complete local commutative Noetherian 
rings with residue field $k$. Note that all rings in $\hat{\mathcal{C}}$ have a natural $W$-algebra structure.
The morphisms in $\hat{\mathcal{C}}$ are continuous $W$-algebra 
homomorphisms which induce the identity map on $k$.

Suppose $G$ is a finite group and $V$ is a finitely generated $kG$-module. 
A \emph{lift} of $V$ over an object $R$ in $\hat{\mathcal{C}}$ is a pair $(M,\phi)$ where $M$ is a finitely 
generated $RG$-module which is free over $R$, and $\phi:k\otimes_R M\to V$ is an isomorphism of 
$kG$-modules. Two lifts $(M,\phi)$ and $(M',\phi')$ of $V$ over $R$ are isomorphic if there is an 
isomorphism $f:M\to M'$ with $\phi=\phi'\circ (k\otimes f)$. The isomorphism class $[M,\phi]$ of a lift 
$(M,\phi)$ of $V$ over $R$ is called a \emph{deformation} of $V$ over $R$, and the set of all such deformations 
is denoted by $\mathrm{Def}_G(V,R)$. The deformation functor
$$\hat{F}_V:\hat{\mathcal{C}} \to \mathrm{Sets}$$ 
is a covariant functor which
sends an object $R$ in $\hat{\mathcal{C}}$ to $\mathrm{Def}_G(V,R)$ and a morphism 
$\alpha:R\to R'$ in $\hat{\mathcal{C}}$ to the map $\mathrm{Def}_G(V,R) \to
\mathrm{Def}_G(V,R')$ defined by $[M,\phi]\mapsto [R'\otimes_{R,\alpha} M,\phi_\alpha]$, where  
$\phi_\alpha=\phi$ after identifying $k\otimes_{R'}(R'\otimes_{R,\alpha} M)$ with $k\otimes_R M$.

Suppose there exists an object $R(G,V)$ in $\hat{\mathcal{C}}$ and a deformation 
$[U(G,V),\phi_U]$ of $V$ over $R(G,V)$ with the following property:
For each $R$ in $\hat{\mathcal{C}}$ and for each lift $(M,\phi)$ of $V$ over $R$ there exists 
a morphism $\alpha:R(G,V)\to R$ in $\hat{\mathcal{C}}$ such that $\hat{F}_V(\alpha)([U(G,V),\phi_U])=
[M,\phi]$, and moreover $\alpha$ is unique if $R$ is the ring of dual numbers
$k[\epsilon]/(\epsilon^2)$. Then $R(G,V)$ is called the \emph{versal deformation ring} of $V$ and 
$[U(G,V),\phi_U]$ is called the \emph{versal deformation} of $V$. If the morphism $\alpha$ is
unique for all $R$ and all lifts $(M,\phi)$ of $V$ over $R$, 
then $R(G,V)$ is called the \emph{universal deformation ring} of $V$ and $[U(G,V),\phi_U]$ is 
called the \emph{universal deformation} of $V$. In other words, $R(G,V)$ is universal if and only if
$R(G,V)$ represents the functor $\hat{F}_V$ in the sense that $\hat{F}_V$ is naturally isomorphic to 
the Hom functor
$\mathrm{Hom}_{\hat{\mathcal{C}}}(R(G,V),-)$.

Note that the above definition of deformations can be weakened as follows.
Given a lift $(M,\phi)$ of $V$ over a ring $R$ in 
$\hat{\mathcal{C}}$, define the corresponding weak deformation to be
the isomorphism class of $M$ as an $RG$-module, without taking into account the specific 
isomorphism $\phi:k\otimes_RM\to V$. 
In general, a weak deformation of $V$ over $R$ identifies more lifts than a deformation of $V$ 
over $R$
that respects the isomorphism $\phi$ of a representative $(M,\phi)$.
However, if the stable
endomorphism ring $\underline{\mathrm{End}}_{kG}(V)$ is isomorphic to $k$, these two 
definitions
of deformations coincide (see  \cite[Remark 2.1]{3quat}).

By \cite{maz1}, every finitely generated $kG$-module $V$ has a versal deformation ring.
Since $G$ is a finite group, we have the following sufficient criterion for the
universality of $R(G,V)$:

\begin{prop}
\label{prop:stablendudr}
{\rm (\cite[Prop. 2.1]{bc})}
Suppose $V$ is a finitely generated $kG$-module whose stable endomorphism ring 
$\underline{\mathrm{End}}_{kG}(V)$ is isomorphic to $k$.  Then $V$ has  a universal 
deformation ring $R(G,V)$.
\end{prop}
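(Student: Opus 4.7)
Plan: The statement has two parts: existence of a versal deformation ring $R(G,V)$, and the upgrade to universal under the stable-endomorphism hypothesis. For existence I would appeal to Mazur's theorem \cite{maz1}: since $G$ is finite, $\hat{F}_V$ satisfies Schlessinger's conditions (H1), (H2), (H3) automatically, with finite-dimensional tangent space $\hat{F}_V(k[\epsilon]/(\epsilon^2))\cong \mathrm{Ext}^1_{kG}(V,V)$, producing a versal deformation ring $R(G,V)$ with versal deformation $[U(G,V),\phi_U]$.

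For universality I would verify Schlessinger's condition (H4). A standard reduction reformulates (H4) as the following lifting property: for every small surjection $\pi\colon R'\to R$ in $\hat{\mathcal{C}}$ and every lift $(M,\phi)$ of $V$ over $R'$, the restriction map
\begin{equation*}
\mathrm{Aut}_{R'G}(M;\phi)\longrightarrow \mathrm{Aut}_{RG}(R\otimes_{R'}M;\phi)
\end{equation*}
is surjective, where the subscript $\phi$ restricts to automorphisms $f$ satisfying $\phi\circ (k\otimes f)=\phi$, i.e.\ covering $\mathrm{id}_V$ on the residue field. Equivalently, one must lift every element of the form $\mathrm{id}+\bar h$ on the target to an automorphism of the same shape on $M$.

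To establish this lifting I would first reduce to the case that $V$ is indecomposable by absorbing projective summands: projective $kG$-modules admit unique lifts to projective $R'G$-modules via idempotent lifting in the complete local ring $R'G$, so $R(G,V)\cong R(G,V_0)$ for $V_0$ the unique indecomposable non-projective direct summand of $V$. Because $k$ is algebraically closed and $V_0$ is indecomposable, $\mathrm{End}_{kG}(V_0)$ is a local $k$-algebra with residue field $k$, and the stable-endomorphism hypothesis identifies its maximal ideal with the ideal $P_{kG}(V_0,V_0)$ of projective-factoring endomorphisms, so $\mathrm{End}_{kG}(V_0)=k\cdot\mathrm{id}+P_{kG}(V_0,V_0)$. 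Given $\mathrm{id}+\bar h\in \mathrm{Aut}_{RG}(R\otimes_{R'} M_0;\phi)$, I would lift $\bar h$ to $h\in \mathrm{End}_{R'G}(M_0)$ by induction along a filtration of $\ker(R'\to R)$: at each step one lifts an element of $\mathrm{End}_{kG}(V_0)$, decomposed as a scalar plus a projective-factoring endomorphism. Scalars lift to $R'\cdot \mathrm{id}$, and any projective-factoring $V_0\to Q\to V_0$ lifts to $M_0\to\widetilde Q\to M_0$, where $\widetilde Q$ is the unique $R'G$-projective lift of $Q$ and the two factorisation maps lift via the projectivity of $\widetilde Q$ combined with the freeness of $M_0$ over $R'$. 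Since $h$ lies in the Jacobson radical of $\mathrm{End}_{R'G}(M_0)$, the element $\mathrm{id}+h$ is automatically an automorphism.

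The main obstacle is the projective-lifting step combined with its inductive packaging: one must carefully arrange that the unique $R'G$-lifts of $kG$-projectives and the lifts of the factorisation maps are compatible with the successive graded pieces of $\mathfrak{m}_{R'}$, so that the induction through the filtration of $\ker(R'\to R)$ goes through. Once this is in hand, (H4) is verified, $\hat{F}_V$ is pro-representable, and $R(G,V)$ is the universal deformation ring of $V$.
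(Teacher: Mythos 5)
The paper gives no proof of this proposition at all: it is quoted verbatim from \cite[Prop.~2.1]{bc}, so the only fair comparison is with the argument in that reference. Your proposal reconstructs essentially that argument --- Mazur/Schlessinger for the versal ring, and Schlessinger's (H4) verified by lifting automorphisms along small extensions using the decomposition $\mathrm{End}_{kG}(V)=k\cdot\mathrm{id}+P_{kG}(V,V)$ forced by $\underline{\mathrm{End}}_{kG}(V)\cong k$, together with the unique liftability of projective $kG$-modules and of maps into and out of them (into: because $\mathrm{Hom}_{R'G}(M',R'G)\cong\mathrm{Hom}_{R'}(M',R')$ is $R'$-free, as $R'G$ is a symmetric $R'$-algebra and $M'$ is $R'$-free; out of: by projectivity). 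So the approach is correct and is the intended one.

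Two remarks. First, the reduction to an indecomposable summand $V_0$ is unnecessary and mildly backwards: the hypothesis $\underline{\mathrm{End}}_{kG}(V)\cong k$ already says $\mathrm{End}_{kG}(V)=k\cdot\mathrm{id}_V+P_{kG}(V,V)$ with no appeal to locality of $\mathrm{End}_{kG}(V_0)$, and the isomorphism $R(G,V)\cong R(G,V_0)$ is \cite[Cor.~2.7]{bc}, which is normally derived \emph{after} this proposition by exactly the endomorphism-lifting technique you are trying to set up; invoking it here risks circularity. Second, the ``main obstacle'' you flag (compatibility of the lifts through the filtration of $\ker(R'\to R)$) has a clean standard resolution that you should make explicit: prove by induction on the length of an Artinian quotient $R$ that for \emph{every} lift $M$ of $V$ over $R$ one has $\mathrm{End}_{RG}(M)=R\cdot\mathrm{id}_M+P_{RG}(M,M)$. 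The inductive step over a small extension $R'\to R$ with kernel $I$ works because the discrepancy between a chosen lift and the target lies in $I\otimes_k\mathrm{End}_{kG}(V)=I\otimes_k\bigl(k\cdot\mathrm{id}+P_{kG}(V,V)\bigr)$, whose projective-factoring part again lifts into $P_{R'G}(M',M')$. This statement immediately gives surjectivity of $\mathrm{End}_{R'G}(M')\to\mathrm{End}_{RG}(M)$, hence of the automorphism groups covering $\mathrm{id}_V$ (any preimage of $\mathrm{id}_M+h$ is $\mathrm{id}_{M'}$ plus a topologically nilpotent endomorphism, hence invertible), which is (H4). With that statement inserted, your proof is complete.
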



\section{Brauer's generalized decomposition numbers}
\label{s:braueristheman}

In this section, we give a brief introduction to Brauer's generalized decomposition numbers,
emphasizing the results needed in this paper. Throughout this section,
let $p$ be a fixed prime number and let $G$ be a finite group such that
$p$ divides $\#G$. Let $P$ be a fixed Sylow $p$-subgroup of $G$.

Brauer introduced generalized decomposition numbers in \cite{brauerordinaryandmodular}
to be able to express the values of the ordinary irreducible characters of  $G$ on all 
conjugacy classes by means of the irreducible $p$-modular characters of certain
subgroups of $G$. More precisely, let $g\in G$ and write $g$ (uniquely) as
$g=uv$ where $u$ is a $p$-element of order $p^\alpha$ and $v$ is a
$p$-regular element in the centralizer $C_G(u)$ (where we allow $u$ or $v$ to be  identity elements). 
Let $\mathrm{IBr}(C_G(u))$ denote the set of distinct irreducible $p$-modular characters of $C_G(u)$.
By \cite[Sect.  1]{brauerordinaryandmodular}, if $\chi$ is an ordinary irreducible 
character of $G$ then we have a formula
\begin{equation}
\label{eq:brauer0}
\chi(uv) = \sum_{\varphi\in\mathrm{IBr}(C_G(u))} d_{\chi,\varphi}^{\,(u)}\;\varphi(v)
\end{equation}
where 
the numbers $d_{\chi,\varphi}^{\,(u)}$, $\varphi\in\mathrm{IBr}(C_G(u))$,
are algebraic integers in the field of $p^\alpha$-th
roots of unity which do not depend on $v$. These are called the \emph{generalized decomposition 
numbers} of $G$ corresponding to $u$ and $\chi$.

If $u=1_G$, then the generalized decomposition numbers corresponding to $u$ 
are the usual decomposition numbers, which are non-negative integers, and
(\ref{eq:brauer0}) is the usual formula for the restriction to 
$p$-regular elements of an ordinary irreducible character $\chi$ in terms of an integral combination of 
the irreducible $p$-modular characters of $G=C_G(1_G)$.

We can use these generalized decomposition numbers to express the ordinary character
table of $G$ as the product of two square matrices as follows.
Recall that we fixed a Sylow $p$-subgroup $P$ of $G$. 
Let $u_0,u_1\ldots,u_h$ be a complete system of
representatives of $G$-conjugacy classes of $p$-elements in $G$ with $u_0=1_G$
and $u_i\in P$ for all $1\le i\le h$.
For each $0\le i\le h$, let $v_{i,1},\ldots,v_{i,\ell_i}$ be a complete system of representatives
of $C_G(u_i)$-conjugacy classes of $p$-regular elements in $C_G(u_i)$ with $v_{i,1}=1_G$.
Then $\{u_iv_{i,j}\;|\; 0\le i\le h,1\le j\le \ell_i\}$ is a complete set of representatives
of the conjugacy classes of $G$. Moreover, for each $0\le i\le h$, there are precisely
$\ell_i$ irreducible $p$-modular characters of $C_G(u_i)$, which we denote by
$\varphi_{i,1},\ldots,\varphi_{i,\ell_i}$. Let $\{\chi_1,\ldots,\chi_c\}$ be a complete
set of representatives of the ordinary irreducible characters of $G$. Then Brauer's above formula
(\ref{eq:brauer0}) can be written as
\begin{equation}
\label{eq:brauer1}
\chi_s(u_iv_{i,j})= \sum_{t=1}^{\ell_i} d_{s,t}^{\,i} \;\varphi_{i,t}(v_{i,j})
\end{equation}
for all $1\le s\le c$, $0\le i\le h$, $1\le j\le \ell_i$, where
$d_{s,t}^{\,i} = d_{\chi_s,\,\varphi_{i,t}}^{\,(u_i)}$ for all $1\le t\le \ell_i$. 
Write the conjugacy class representatives in the order
$$u_0v_{0,1},\ldots,u_0v_{0,\ell_0},u_1v_{1,1},\ldots,u_1v_{1,\ell_1},\ldots,
u_hv_{h,1},\ldots,u_hv_{h,\ell_h}.$$
Using (\ref{eq:brauer1}),
we obtain that the ordinary character table $\mathcal{X}$ can be written as a product
$\mathcal{X} = \Delta\cdot \Phi$, where $\Delta$ contains the generalized decomposition 
numbers and $\Phi$ is a block diagonal matrix
\begin{equation}
\label{eq:Phi}
\Phi=\left(\begin{array}{ccc}
\Phi_0&&0\\
&\ddots&\\
0&&\Phi_h\end{array}\right)
\end{equation}
with
$$\Phi_i=\left(\begin{array}{ccc}
\varphi_{i,1}(v_{i,1})&\cdots&\varphi_{i,1}(v_{i,\ell_i})\\
\vdots&&\vdots\\
\varphi_{i,\ell_i}(v_{i,1})&\cdots&\varphi_{i,\ell_i}(v_{i,\ell_i})
\end{array}\right)$$
for all $0\le i\le h$. By \cite[Sect. 7, p. 45]{brauerdarst2}, the square of the determinant
of $\Delta$ is $\pm p^a$ for some $a\in\mathbb{Z}^+$, and the square  of the
determinant of $\Phi$ is an integer which is relatively prime to $p$.

Fix now a $p$-modular block $B$ of $G$ and suppose that there are
$k(B)$ ordinary irreducible characters belonging to $B$.
Reorder $\chi_1,\ldots,\chi_c$ such that $\chi_1,\ldots,\chi_{k(B)}$
belong to $B$. For each $0\le i\le h$, reorder $\varphi_{i,1}\ldots,
\varphi_{i,\ell_i}$ such that 
the first $m_i$  characters, $\varphi_{i,1},\ldots,\varphi_{i,m_i}$, are precisely the
irreducible $p$-modular characters belonging to blocks of 
$C_G(u_i)$ whose Brauer correspondents in $G$ are equal to $B$.
(Note that for each block $b$ of $C_G(u_i)$, its Brauer correspondent $b^G$ in $G$ is
well-defined since the centralizer in $G$ of a defect group of $b$ is contained
in $C_G(u_i)$, see \cite[Sect. 14]{alp}.) It follows from \cite[Sect. 6]{brauerdarst2}
that for $1\le s\le k(B)$, (\ref{eq:brauer1}) can be rewritten as
\begin{equation}
\label{eq:brauer2}
\chi_s(u_iv_{i,j}) = \sum_{t=1}^{m_i} d_{s,t}^{\,i} \;\varphi_{i,t}(v_{i,j})
\end{equation}
for all $0\le i\le h$, $1\le j\le \ell_i$. 
In particular, we obtain for all $0\le i\le h$ that
\begin{equation}
\label{eq:brauer3}
\mathcal{X}_{B,i}=\left(\begin{array}{ccc}
\chi_1(u_iv_{i,1})&\cdots&\chi_1(u_iv_{i,\ell_i})\\
\vdots&&\vdots\\
\chi_{k(B)}(u_iv_{i,1})&\cdots&\chi_{k(B)}(u_iv_{i,\ell_i})
\end{array}\right)=
\Delta_{B,i}\cdot \Phi_i
\end{equation}
where
$$\Delta_{B,i}=\left(\begin{array}{lllccc}
d_{1,1}^{\,i}&\cdots&d_{1,m_i}^{\,i}&0&\cdots&0\\
\quad\vdots&&\quad\vdots&\vdots&&\vdots\\
d_{k(B),1}^{\, i}&\cdots&d_{k(B),m_i}^{\, i}&0&\cdots&0
\end{array}\right)$$
and we assume, as above, that $\varphi_{i,1}\ldots,\varphi_{i,m_i}$ are precisely the
irreducible $p$-modular characters belonging to blocks of 
$C_G(u_i)$ whose Brauer correspondents in $G$ are equal to $B$.

Let now $k$ be an algebraically closed field of characteristic $p$ and view
$B$ as a block of $kG$. Let $W=W(k)$ be the ring of infinite Witt vectors over $k$ and
let $F$ be the fraction field of $W$. In particular, $W$ contains all roots of unity of order not 
divisible by $p$. Let $\overline{F}$ be a fixed algebraic closure of $F$,
and let $\xi$ be a root of unity in $\overline{F}$ whose order is a power of $p$
such that $F(\xi)$ is a splitting field for $G$.
Then $W[\xi]$ is the ring of integers of $F(\xi)$, and we can view the ordinary character 
table $\mathcal{X}$ of $G$ and the matrix $\Delta$ of generalized decomposition numbers
as taking values in $W[\xi]$, and the matrix $\Phi$ in (\ref{eq:Phi}) as taking values in $W$.
Since the square of the determinant of $\Phi$ is an integer that is relatively prime to $p$, it
follows that the determinant of $\Phi$, and hence the determinant of $\Phi_i$ for
all $0\le i\le h$, is a unit in $W$. Hence we can solve (\ref{eq:brauer3}) for
$\Delta_{B,i}$ to obtain
\begin{equation}
\label{eq:brauer4}
\Delta_{B,i} = \mathcal{X}_{B,i}\cdot \Psi_i
\end{equation}
for all $0\le i\le h$, where $\Psi_i=\Phi_i^{-1}$ is a matrix with values in $W$.
In particular, (\ref{eq:brauer4}) shows that if we reduce our discussion to a single block $B$ then we can
replace $\xi$ by a $p$-power order root of unity $\zeta$ in $\overline{F}$ such that all ordinary irreducible 
characters of $G$ belonging to $B$ are realizable over $F(\zeta)$, i.e. they correspond to
absolutely irreducible $F(\zeta)G$-modules.

\begin{rem}
\label{rem:fusion}
Equations (\ref{eq:brauer0}) and (\ref{eq:brauer2}) can be rewritten to reflect the influence of
fusion of $P$-conjugacy classes in $G$ (see  \cite[Sect. 1]{brblocks}). 
As before, let $u$ be a $p$-element of $G$ and let 
$v$ be a $p$-regular element in $C_G(u)$. Assuming the notation of the previous paragraph,
let $\chi$ be an irreducible $F(\xi)$-character which belongs to a block $B$ of $kG$.
Recall that a subsection $(y,b_y)$ for $B$ is a pair consisting of a $p$-element $y$ of $G$
and a block $b_y$ of $C_G(y)$ with $b_y^G=B$. We obtain
\begin{equation}
\label{eq:fusion!}
\chi(uv) = \sum_{(y,b_y)} \;\sum_{\varphi\in\mathrm{IBr}(b_y)}d_{\chi,\varphi}^{\,(y)} \;\varphi(z_yvz_y^{-1})
\end{equation}
where $(y,b_y)$ ranges over a system of representatives for the conjugacy classes of  subsections
for $B$ such that $y$ is conjugate to $u$ in $G$, say $u=z_y^{-1}yz_y$. For each
$(y,b_y)$, $\varphi$ ranges over the irreducible $p$-modular characters
associated with $b_y$.
\end{rem}

\begin{dfn}
\label{def:maximallyordinary}
Let $k$ be an algebraically closed field of characteristic $p$,  let $W$ be the ring of infinite Witt vectors 
over $k$, and let $\overline{F}$ be a fixed algebraic closure of the fraction field $F$ of $W$.
Suppose $B$ is a block of $kG$ and that $D$ is a defect group of $B$.
Let $\zeta\in \overline{F}$ be a root of unity of $p$-power order such that all ordinary irreducible 
characters of $G$ belonging to $B$ are realizable over $F(\zeta)$.
In particular, the ordinary (resp. $p$-modular) characters belonging to $B$ can be viewed
as taking values in $W[\zeta]$ (resp. $W$).
Suppose $V$ is an indecomposable $kG$-module whose stable endomorphism ring is isomorphic to $k$
and which belongs to $B$. We say $V$ is \emph{maximally ordinary}
if the $p$-modular character of $V$ is the restriction to the $p$-regular elements of an ordinary
irreducible character $\chi$ such that 
for every $\sigma\in D$ of maximal $p$-power order there exists an 
irreducible $p$-modular character $\varphi$ of $C_G(\sigma)$
such that 
$d_{\chi,\varphi}^{\,(\sigma)} \not\in\{0,\pm 1\}$.
\end{dfn}


\section{Tame blocks}
\label{s:tame}

For the remainder of this paper, we make the following assumptions:

\begin{hypo}
\label{hyp:alltheway}
Let $k$ be an algebraically closed field of positive characteristic $p$, let 
$W=W(k)$ be the ring of infinite Witt vectors over $k$, let $F$ be the fraction field of 
$F$, and let $\overline{F}$ be a fixed algebraic closure of $F$. 
Suppose $G$ is a finite group, 
$B$ is a non-local block of $kG$ of infinite tame representation type, and $D$ is a defect group 
of $B$ of order $p^n$. 
Let $\zeta$ be a primitive $p^{n-1}$-th root of unity in $\overline{F}$. 
\end{hypo}

It follows from \cite{bd,br,hi} that $p=2$, $n\ge 2$, and $D$ is dihedral, semidihedral or generalized 
quaternion. 
In particular, $n\ge 2$ if $D$ is dihedral, $n\ge 3$ if $D$ is generalized quaternion, 
and $n\ge 4$ if $D$ is semidihedral.
Moreover, Brauer  and Olsson proved that 
all ordinary irreducible characters of $G$ belonging to $B$ take values in $F(\zeta)$
(see \cite[Prop. (7D)]{brIV}, \cite[Prop. (5A)]{brauer2} and \cite[Prop. 4.1]{olsson}).
In fact, all ordinary irreducible characters of $G$ belonging 
to $B$ can be realized by simple $F(\zeta)G$-modules (see Section \ref{s:ordinarytame}).

Assume Hypothesis \ref{hyp:alltheway}.
By \cite{brauer2,olsson}, it follows that there are at most three isomorphism classes of simple
$B$-modules.
From Erdmann's classification of all blocks of tame representation type 
in \cite{erd}, it follows that the quiver and relations of the basic algebra
of $B$ can be given explicitly and that, up to Morita equivalence, there
are 24 families of blocks  $B$. 

Using \cite{eisele,erdsemid,erd,holm}, we now give a  description of these families as follows. 
By \cite[pp. 294--306]{erd},
there are 12 possible quivers $Q$ which can occur for basic algebras of dihedral, semidihedral
or quaternion type: $2\mathcal{A}$, $2\mathcal{B}$, $3\mathcal{A}$,
$3\mathcal{B}$, $3\mathcal{C}$, $3\mathcal{D}$, $3\mathcal{F}$, $3\mathcal{H}$, 
$3\mathcal{K}$, $3\mathcal{L}$, $3\mathcal{Q}$, $3\mathcal{R}$.
For each such quiver $Q$, we combine Erdmann's results in \cite{erdsemid,erd} and 
\cite[Prop. 4.2]{holm} with Eisele's results in
\cite{eisele} to provide the most accurate description of  
a full set of representatives of basic algebras $\Lambda=kQ/I$ for the Morita equivalence
classes of blocks $B$ as in Hypothesis \ref{hyp:alltheway} with Ext quiver $Q$.

We also provide the decomposition matrix for each block $B$. For better readability, all
decomposition matrices appear at the end of the paper.
As will be discussed in Section \ref{s:ordinarytame},
$B$ always contains exactly 4 ordinary irreducible characters of height 0 and, unless
$D$ is quaternion of order 8, exactly $2^{n-2}-1$ ordinary irreducible characters of height 1. 
If $D$ is quaternion of order 8, $B$ contains exactly 3 ordinary irreducible characters of height 1. If $B$ is generalized quaternion or semidihedral, there may be additional ordinary 
irreducible characters of height $n-2$. In the decomposition matrices, we list 
first the 4 ordinary irreducible characters of height 0, then the family of $2^{n-2}-1$ 
ordinary irreducible characters of height 1, and then the ordinary irreducible characters 
of height $n-2$ if they exist. Note that the family of $2^{n-2}-1$ characters of height 1
all define the same Brauer character on restricting to the 2-regular conjugacy classes 
of $G$. If $B$ has three isomorphism classes of simple modules, we moreover order the ordinary
irreducible characters according to the sign 
conventions described in \cite[Thm. 5]{brauer2} and \cite[Thms. 4.10, 4.11 and 4.15]{olsson}.

To distinguish between different defect groups, we use the notation
$\D(Q)$ (resp. $\SD(Q)$, resp. $\Q(Q)$) to mean that
$\Lambda=kQ/I$ is Morita equivalent to a block $B$ with dihedral (resp. semidihedral,
resp. generalized quaternion) defect groups. 

\subsection{Blocks with quiver $2\mathcal{A}$}
\label{ss:quiver2A}
$$\begin{array}{rl}
\raisebox{-2ex}{$2\mathcal{A}\quad=$}&\xymatrix @R=-.2pc {
0&1\\
\ar@(ul,dl)_{\alpha} \bullet \ar@<.8ex>[r]^{\beta} &\bullet\ar@<.9ex>[l]^{\gamma}}
\end{array}$$

By \cite{eisele} and \cite[p. 294]{erd}, if $B$ has dihedral defect groups and Ext quiver 
$2\mathcal{A}$, then
$B$ is Morita equivalent to $\D(2\mathcal{A})=k[2\mathcal{A}]/I_{\D(2\mathcal{A})}$  where
$$I_{\D(2\mathcal{A})}=\langle \beta\gamma, \alpha^2,
(\gamma\beta\alpha)^{2^{n-2}}-(\alpha\gamma\beta)^{2^{n-2}}\rangle .$$
The corresponding decomposition matrix is given in Figure \ref{fig:decompD(2A)}.

By \cite[p. 298]{erd}, if $B$ has semidihedral defect groups and Ext quiver $2\mathcal{A}$, then
there exists $c\in k$ such that
$B$ is Morita equivalent to either
$\SD(2\mathcal{A})_1(c)=k[2\mathcal{A}]/I_{\SD(2\mathcal{A})_1,c}$ or
$\SD(2\mathcal{A})_2(c)=k[2\mathcal{A}]/I_{\SD(2\mathcal{A})_2,c}$ where
$$\begin{array}{rl}
I_{\SD(2\mathcal{A})_1,c}\;=&\!\!\!\langle \alpha^2-c(\gamma\beta\alpha)^{2^{n-2}},
\beta\gamma\beta-\beta\alpha(\gamma\beta\alpha)^{2^{n-2}-1},
\gamma\beta\gamma-\alpha\gamma(\beta\alpha\gamma)^{2^{n-2}-1},\\
&  \alpha(\gamma\beta\alpha)^{2^{n-2}}\rangle,\\[1ex]
I_{\SD(2\mathcal{A})_2,c}\;=&\!\!\!\langle \beta\gamma,
\alpha^2-\gamma\beta(\alpha\gamma\beta)^{2^{n-2}-1}-c(\gamma\beta\alpha)^{2^{n-2}}, 
(\gamma\beta\alpha)^{2^{n-2}}-(\alpha\gamma\beta)^{2^{n-2}}\rangle.
\end{array}$$
The decomposition matrix for $\SD(2\mathcal{A})_1(c)$ is given in Figure \ref{fig:decompSD(2A)},
and the decomposition matrix for $\SD(2\mathcal{A})_2(c)$ is given in Figure 
\ref{fig:decompD(2A)}.

By \cite[p. 303]{erd}, if $B$ has generalized quaternion defect groups and Ext quiver 
$2\mathcal{A}$, then there exists $c\in k$ such that
$B$ is Morita equivalent to $\Q(2\mathcal{A})(c)=k[2\mathcal{A}]/I_{\Q(2\mathcal{A}),c}$ where
$$\begin{array}{rl}
I_{\Q(2\mathcal{A}),c}\;=&\!\!\!\langle 
\alpha^2-\gamma\beta(\alpha\gamma\beta)^{2^{n-2}-1}-c(\alpha\gamma\beta)^{2^{n-2}},
\beta\gamma\beta-\beta\alpha(\gamma\beta\alpha)^{2^{n-2}-1},\\
& \gamma\beta\gamma-\alpha\gamma(\beta\alpha\gamma)^{2^{n-2}-1},
\beta\alpha^2\rangle.
\end{array}$$
The decomposition matrix for $\Q(2\mathcal{A})(c)$ is given in Figure \ref{fig:decompSD(2A)}.

\subsection{Blocks with quiver $2\mathcal{B}$}
\label{ss:quiver2B}
$$\begin{array}{rl}
\raisebox{-2ex}{$2\mathcal{B}\quad=$}&\xymatrix @R=-.2pc {
0&1\\
\ar@(ul,dl)_{\alpha} \bullet \ar@<.8ex>[r]^{\beta} &\bullet\ar@<.9ex>[l]^{\gamma}
\ar@(ur,dr)^{\eta}}
\end{array}$$

By \cite{eisele} and \cite[p. 295]{erd}, if $B$ has dihedral defect groups and Ext quiver 
$2\mathcal{B}$, then
$B$ is Morita equivalent to $\D(2\mathcal{B})=k[2\mathcal{B}]/I_{\D(2\mathcal{B})}$  where
$$I_{\D(2\mathcal{B})}=\langle \eta\beta,\gamma\eta,\beta\gamma, \alpha^2,
\gamma\beta\alpha-\alpha\gamma\beta,\eta^{2^{n-2}}-\beta\alpha\gamma\rangle.$$
The corresponding decomposition matrix is given in Figure \ref{fig:decompD(2B)}.

By \cite[Lemmas (8.11) and (8.15)]{erdsemid}, \cite[p. 299]{erd} and \cite[Prop. 4.2]{holm},
if $B$ has semidihedral defect groups and Ext quiver $2\mathcal{B}$, then
there exists $c\in k$
such that
$B$ is Morita equivalent to either
$\SD(2\mathcal{B})_1(c)=k[2\mathcal{B}]/I_{\SD(2\mathcal{B})_1,c}$ or
$\SD(2\mathcal{B})_2(c)=k[2\mathcal{B}]/I_{\SD(2\mathcal{B})_2,c}$ or
$\SD(2\mathcal{B})_4(c)=k[2\mathcal{B}]/I_{\SD(2\mathcal{B})_4,c}$
where
$$\begin{array}{rl}
I_{\SD(2\mathcal{B})_1,c}\;=&\!\!\!\langle \eta\beta,\gamma\eta,\beta\gamma, 
\alpha^2-\gamma\beta-c\,\alpha\gamma\beta,
\gamma\beta\alpha-\alpha\gamma\beta,\eta^{2^{n-2}}-\beta\alpha\gamma
\rangle,\\[1ex]
I_{\SD(2\mathcal{B})_2,c}\;=&\!\!\!\langle \eta\beta-\beta\alpha(\gamma\beta\alpha),
\gamma\eta-\alpha\gamma(\beta\alpha\gamma),\alpha^2-c(\gamma\beta\alpha)^2,
\beta\gamma-\eta^{2^{n-2}-1},\\
& \eta^2\beta,\gamma\eta^2\rangle,\\[1ex]
I_{\SD(2\mathcal{B})_4,c}\;=&\!\!\!\langle \gamma\eta-\alpha\gamma,\beta\alpha-\eta\beta,
\alpha^{2^{n-2}+1},\eta^{2^{n-2}+1},\beta\alpha^{2^{n-2}-1},\alpha^{2^{n-2}-1}\gamma,\\
&\gamma\eta^{2^{n-2}-1}, \eta^{2^{n-2}-1}\beta,
\gamma\beta-\alpha^2,\beta\gamma-\eta^2(1+c\,\eta^{2^{n-2}-2})
\rangle.
\end{array}$$
The decomposition matrix for $\SD(2\mathcal{B})_1(c)$ is given in Figure \ref{fig:decompD(2B)},
the decomposition matrix for $\SD(2\mathcal{B})_2(c)$ is given in Figure \ref{fig:decompSD(2B)},
and
the decomposition matrix for $\SD(2\mathcal{B})_4(c)$ is given in Figure 
\ref{fig:decompSD(2B)weird}.

By \cite[IX.4.1 and pp. 303--304]{erd},
if $B$ has 
generalized quaternion defect groups and Ext quiver $2\mathcal{B}$, then
there exists $c\in k$, respectively $p(t)\in k[t]$ with $p(0)=1$ and $a,c\in k$ with $a\neq 0$,
such that $B$ is Morita equivalent to either 
$\Q(2\mathcal{B})_1(c)=k[2\mathcal{B}]/I_{\Q(2\mathcal{B})_1,c}$ 
or $\Q(2\mathcal{B})_2(p,a,c)=k[2\mathcal{B}]/I_{\Q(2\mathcal{B})_2,p,a,c}$ 
where
$$\begin{array}{rl}
I_{\Q(2\mathcal{B})_1,c}\;=&\!\!\!\langle 
\eta\beta-\beta\alpha(\gamma\beta\alpha),
\gamma\eta-\alpha\gamma(\beta\alpha\gamma),
\alpha^2-\gamma\beta(\alpha\gamma\beta)-c(\alpha\gamma\beta)^2,\\
&\beta\gamma-\eta^{2^{n-2}-1}, \beta\alpha^2\rangle,\\[1ex]
I_{\Q(2\mathcal{B})_2,p,a,c}\;=&\!\!\!\langle 
\gamma\eta-\alpha\gamma,\beta\alpha-\eta\beta,
\alpha^{2^{n-2}+1},\eta^{2^{n-2}+1},\beta\alpha^{2^{n-2}-1},\alpha^{2^{n-2}-1}\gamma,\\
& \gamma\beta-p(\alpha)\alpha^2,
\beta\gamma-p(\eta)\eta^2-a\eta^{2^{n-2}-1}-c\eta^{2^{n-2}}\rangle.
\end{array}$$
The decomposition matrix for $\Q(2\mathcal{B})_1(c)$ is given in Figure \ref{fig:decompSD(2B)},
and the decomposition matrix for $\Q(2\mathcal{B})_2(p,a,c)$ is given in Figure 
\ref{fig:decompSD(2B)weird}. Note that by \cite[Lemma 3.3]{olsson}, $\Q(2\mathcal{B})_2(p,a,c)$
can actually not occur as a block.

\subsection{Blocks with quiver $3\mathcal{A}$}
\label{ss:quiver3A}

$$
\xymatrix @R=-.2pc {
&0&\\
3\mathcal{A}= \quad 1\; \bullet \ar@<.8ex>[r]^(.66){\beta} \ar@<1ex>[r];[]^(.34){\gamma}
& \bullet \ar@<.8ex>[r]^(.44){\delta} \ar@<1ex>[r];[]^(.56){\eta} & \bullet\; 2}
$$

By \cite[IX.5.4 and p. 295]{erd}, if $B$ has dihedral defect groups and Ext quiver 
$3\mathcal{A}$, then
$B$ is Morita equivalent to $\D(3\mathcal{A})_1=k[3\mathcal{A}]/I_{\D(3\mathcal{A})_1}$  where
$$I_{\D(3\mathcal{A})_1}=\langle \gamma\beta,\delta\eta,
(\eta\delta\beta\gamma)^{2^{n-2}}-(\beta\gamma\eta\delta)^{2^{n-2}}\rangle.$$
The corresponding decomposition matrix is given in Figure \ref{fig:decompD(3A)}.

By \cite[IX.5.3 and pp. 299--300]{erd}, 
if $B$ has semidihedral defect groups and Ext quiver $3\mathcal{A}$, then
$B$ is Morita equivalent to $\SD(3\mathcal{A})_1=k[3\mathcal{A}]/I_{\SD(3\mathcal{A})_1}$  where
$$I_{\SD(3\mathcal{A})_1}=\langle \gamma\beta,
\delta\eta\delta-\delta\beta\gamma(\eta\delta\beta\gamma)^{2^{n-2}-1},
\eta\delta\eta-\beta\gamma\eta(\delta\beta\gamma\eta)^{2^{n-2}-1}\rangle.$$
The decomposition matrix for $\SD(3\mathcal{A})_1$ is given in Figure \ref{fig:decompSD(3A)}.

By \cite[IX.5.2 and pp. 304--305]{erd}, if $B$ has generalized quaternion defect groups and 
Ext quiver $3\mathcal{A}$, then
$B$ is Morita equivalent to $\Q(3\mathcal{A})_2=k[3\mathcal{A}]/I_{\Q(3\mathcal{A})_2}$  where
$$\begin{array}{rl}
I_{\Q(3\mathcal{A})_2}\;=&\!\!\!\langle 
\beta\gamma\beta-\eta\delta\beta(\gamma\eta\delta\beta)^{2^{n-2}-1},
\gamma\beta\gamma-\gamma\eta\delta(\beta\gamma\eta\delta)^{2^{n-2}-1},
\delta\beta\gamma\beta,\\
&\delta\eta\delta-\delta\beta\gamma(\eta\delta\beta\gamma)^{2^{n-2}-1},
\eta\delta\eta-\beta\gamma\eta(\delta\beta\gamma\eta)^{2^{n-2}-1},
\gamma\eta\delta\eta\rangle.
\end{array}$$
The decomposition matrix for $\Q(3\mathcal{A})_2$ is given in Figure \ref{fig:decompQ(3A)}.

\subsection{Blocks with quiver $3\mathcal{B}$}
\label{ss:quiver3B}

$$
\xymatrix @R=-.2pc {
&1&0&\\
 3\mathcal{B}= \quad&\ar@(ul,dl)_{\alpha} \bullet \ar@<.8ex>[r]^{\beta} \ar@<.9ex>[r];[]^{\gamma}
& \bullet \ar@<.8ex>[r]^(.46){\delta} \ar@<.9ex>[r];[]^(.54){\eta} & \bullet\;2}
$$

By \cite[IX.5.4 and pp. 295--296]{erd}, if $B$ has dihedral defect groups and Ext quiver 
$3\mathcal{B}$, then
$B$ is Morita equivalent to $\D(3\mathcal{B})_1=k[3\mathcal{B}]/I_{\D(3\mathcal{B})_1}$  where
$$I_{\D(3\mathcal{B})_1}=\langle \beta\alpha,\alpha\gamma, \gamma\beta,\delta\eta,
\eta\delta\beta\gamma-\beta\gamma\eta\delta,\alpha^{2^{n-2}}-\gamma\eta\delta\beta\rangle.$$
The corresponding decomposition matrix is given in Figure \ref{fig:decompD(3B)}.

By \cite[p. 300]{erd}, if $B$ has semidihedral defect groups and Ext quiver $3\mathcal{B}$, then
$B$ is Morita equivalent to either
$\SD(3\mathcal{B})_1=k[3\mathcal{B}]/I_{\SD(3\mathcal{B})_1}$ or
$\SD(3\mathcal{B})_2=k[3\mathcal{B}]/I_{\SD(3\mathcal{B})_2}$ where
$$\begin{array}{rl}
I_{\SD(3\mathcal{B})_1}\;=&\!\!\!\langle \beta\alpha,\alpha\gamma, \gamma\beta,
\delta\eta\delta-\delta\beta\gamma, \eta\delta\eta-\beta\gamma\eta,
\alpha^{2^{n-2}}-\gamma\eta\delta\beta\rangle,\\[1ex]
I_{\SD(3\mathcal{B})_2}\;=&\!\!\!\langle \delta\eta,\gamma\beta-\alpha^{2^{n-2}-1},
\alpha\gamma-\gamma\eta\delta(\beta\gamma\eta\delta),
\beta\alpha-\eta\delta\beta(\gamma\eta\delta\beta)\rangle.
\end{array}$$
The decomposition matrix for $\SD(3\mathcal{B})_1$ is given in Figure \ref{fig:decompSD(3B)1},
and
the decomposition matrix for $\SD(3\mathcal{B})_2$ is given in Figure \ref{fig:decompSD(3B)2}.

By \cite[p. 305]{erd}, if $B$ has generalized quaternion defect groups and 
Ext quiver $3\mathcal{B}$, then
$B$ is Morita equivalent to $\Q(3\mathcal{B})=k[3\mathcal{B}]/I_{\Q(3\mathcal{B})}$  where
$$\begin{array}{rl}
I_{\Q(3\mathcal{B})}\;=&\!\!\!\langle \gamma\beta-\alpha^{2^{n-2}-1},
\alpha\gamma-\gamma\eta\delta(\beta\gamma\eta\delta), 
\beta\alpha-\eta\delta\beta(\gamma\eta\delta\beta),
\delta\eta\delta-\delta\beta\gamma(\eta\delta\beta\gamma),\\
& 
\eta\delta\eta-\beta\gamma\eta(\delta\beta\gamma\eta), \beta\alpha^2,\delta\eta\delta\beta\rangle.
\end{array}$$
The decomposition matrix for $\Q(3\mathcal{B})$ is given in Figure \ref{fig:decompQ(3B)}.

\subsection{Blocks with quiver $3\mathcal{C}$}
\label{ss:quiver3C}

$$
\xymatrix @R=-.2pc {
&&0&\\
 3\mathcal{C}= \quad&1\; \bullet \ar@<.8ex>[r]^{\beta} \ar@<.9ex>[r];[]^{\gamma}
& \bullet \ar@(dl,dr)_{\rho}\ar@<.8ex>[r]^(.46){\delta} \ar@<.9ex>[r];[]^(.54){\eta} & \bullet\;2}
$$

By \cite[VI.5]{erd} (resp. by \cite[IX.5.3 and p. 305]{erd}),
there are no blocks $B$ with dihedral (resp.
generalized quaternion) defect groups that have Ext quiver $3\mathcal{C}$.

By \cite[IX.5.3 and pp. 300--301]{erd}, if $B$ has semidihedral defect groups and Ext quiver $3\mathcal{C}$, then
$B$ is Morita equivalent to either
$\SD(3\mathcal{C})_{2,1}=k[3\mathcal{C}]/I_{\SD(3\mathcal{C})_{2,1}}$ or
$\SD(3\mathcal{C})_{2,2}=k[3\mathcal{C}]/I_{\SD(3\mathcal{C})_{2,2}}$ where
$$\begin{array}{rl}
I_{\SD(3\mathcal{C})_{2,1}}\;=&\!\!\!\langle \rho\beta,\delta\rho,\rho\eta,\gamma\rho,
\beta\gamma-\eta\delta,(\beta\gamma)^2-\rho^{2^{n-2}},
\delta\beta\gamma\beta,\gamma\eta\delta\eta\rangle,\\[1ex]
I_{\SD(3\mathcal{C})_{2,2}}\;=&\!\!\!\langle \rho\beta,\delta\rho,\rho\eta,\gamma\rho,
\beta\gamma-\eta\delta,(\beta\gamma)^{2^{n-2}}-\rho^2,
\delta\beta(\gamma\beta)^{2^{n-2}-1},\\
&\gamma\eta(\delta\eta)^{2^{n-2}-1} \rangle.
\end{array}$$
The decomposition matrix for $\SD(3\mathcal{C})_{2,1}$ is given in Figure 
\ref{fig:decompSD(3C)1}, and
the decomposition matrix for $\SD(3\mathcal{C})_{2,2}$ is given in Figure 
\ref{fig:decompSD(3C)2}.

\subsection{Blocks with quiver $3\mathcal{D}$}
\label{ss:quiver3D}

$$
\xymatrix @R=-.2pc {
&1&0&2\\
 3\mathcal{D}= \quad&\ar@(ul,dl)_{\alpha} \bullet \ar@<.8ex>[r]^{\beta} \ar@<.9ex>[r];[]^{\gamma}
& \bullet \ar@<.8ex>[r]^(.46){\delta} \ar@<.9ex>[r];[]^(.54){\eta} & \bullet \ar@(ur,dr)^{\xi}}
$$

By \cite[IX.5.1, IX.5.4 and p. 296]{erd} (resp. by \cite[IX.5.1 and p. 306]{erd}),
there are no blocks $B$ with dihedral (resp.
generalized quaternion) defect groups that have Ext quiver $3\mathcal{D}$.

By \cite[p. 301]{erd}, if $B$ has semidihedral defect groups and Ext quiver $3\mathcal{D}$, then
$B$ is Morita equivalent to $\SD(3\mathcal{D})=k[3\mathcal{D}]/I_{\SD(3\mathcal{D})}$  where
$$I_{\SD(3\mathcal{D})}=\langle
\xi\delta,\eta\xi,\delta\eta,\gamma\beta-\alpha^{2^{n-2}-1},\alpha\gamma-\gamma\eta\delta,
\beta\alpha-\eta\delta\beta,\xi^2-\delta\beta\gamma\eta\rangle.$$
The decomposition matrix for $\SD(3\mathcal{D})$ is given in Figure \ref{fig:decompSD(3B)1}.

\subsection{Blocks with quiver $3\mathcal{F}$}
\label{ss:quiver3F}

By \cite[VI.5]{erd} (resp. by \cite[VII.4]{erd}), there are no blocks $B$ with dihedral (resp.
generalized quaternion) defect groups that have Ext quiver $3\mathcal{F}$.
By \cite[IX.5.2 and p. 301]{erd}, there are also no blocks $B$ with semidihedral 
defect groups that have Ext quiver $3\mathcal{F}$.

\subsection{Blocks with quiver $3\mathcal{H}$}
\label{ss:quiver3H}

$$3\mathcal{H}=\vcenter{\xymatrix @R1.8pc  {
 0\,\bullet \ar@<.7ex>[rr]^{\beta} \ar@<.8ex>[rr];[]^{\gamma} \ar@<.8ex>[rdd];[]^{\lambda}
&&\bullet\ar@<.7ex>[ldd]^{\delta} \ar@<.8ex>[ldd];[]^{\eta}\,1\\&&\\ &
\unter{\mbox{\normalsize $\bullet$}}{\mbox{\normalsize $2$}}& }}$$

By \cite[VI.5]{erd} (resp. by \cite[VII.4]{erd}), there are no blocks $B$ with dihedral (resp.
generalized quaternion) defect groups that have Ext quiver $3\mathcal{H}$.

By \cite[p. 301]{erd}, if $B$ has semidihedral defect groups and Ext quiver $3\mathcal{H}$, then
$B$ is Morita equivalent to $\SD(3\mathcal{H})_1=k[3\mathcal{H}]/I_{\SD(3\mathcal{H})_1}$
or $\SD(3\mathcal{H})_2=k[3\mathcal{H}]/I_{\SD(3\mathcal{H})_2}$  where
$$\begin{array}{rl}
I_{\SD(3\mathcal{H})_1}\;=&\!\!\!\langle
\lambda\delta-\gamma\beta\gamma,\beta\lambda-\eta(\delta\eta)^{2^{n-2}-1},
\eta\delta\beta,\delta\beta\gamma,\gamma\eta\rangle,\\[1ex]
I_{\SD(3\mathcal{H})_2}\;=&\!\!\!\langle
\lambda\delta-\gamma(\beta\gamma)^{2^{n-2}-1},\beta\lambda-\eta\delta\eta,
\eta\delta\beta,\delta\beta\gamma,\gamma\eta\rangle.
\end{array}$$
The decomposition matrix for $\SD(3\mathcal{H})_1$ is given in Figure \ref{fig:decompSD(3H)1},
and the decomposition matrix for $\SD(3\mathcal{H})_2$ is given in Figure 
\ref{fig:decompSD(3H)2}.

\subsection{Blocks with quiver $3\mathcal{K}$}
\label{ss:quiver3K}

$$3\mathcal{K}=\vcenter{\xymatrix @R1.8pc {
 0\,\bullet \ar@<.7ex>[rr]^{\beta} \ar@<.8ex>[rr];[]^{\gamma}\ar@<.7ex>[rdd]^{\kappa} \ar@<.8ex>[rdd];[]^{\lambda}
&&\bullet\ar@<.7ex>[ldd]^{\delta} \ar@<.8ex>[ldd];[]^{\eta}\,1\\&&\\ &
\unter{\mbox{\normalsize $\bullet$}}{\mbox{\normalsize $2$}}& }}$$

By \cite[IX.5.2 and p. 302]{erd}, there are no blocks $B$ with semidihedral 
defect groups that have Ext quiver $3\mathcal{K}$.

By \cite[p. 296]{erd}, if $B$ has dihedral defect groups and Ext quiver $3\mathcal{K}$, then
$B$ is Morita equivalent to $\D(3\mathcal{K})=k[3\mathcal{K}]/I_{\D(3\mathcal{K})}$  where
$$I_{\D(3\mathcal{K})}=\langle \delta\beta,\lambda\delta,\beta\lambda,\kappa\gamma,\eta\kappa,
\gamma\eta, \gamma\beta-\lambda\kappa,\kappa\lambda-(\delta\eta)^{2^{n-2}},
(\eta\delta)^{2^{n-2}}-\beta\gamma\rangle.$$
The corresponding decomposition matrix is given in Figure \ref{fig:decompD(3K)}.

By \cite[p. 306]{erd}, if $B$ has generalized quaternion defect groups and 
Ext quiver $3\mathcal{K}$, then
$B$ is Morita equivalent to $\Q(3\mathcal{K})=k[3\mathcal{K}]/I_{\Q(3\mathcal{K})}$  where
$$\begin{array}{rl}
I_{\Q(3\mathcal{K})}\;=&\!\!\!\langle \delta\beta-\kappa\lambda\kappa,
\gamma\eta-\lambda\kappa\lambda,
\lambda\delta-\gamma\beta\gamma,
\eta\kappa-\beta\gamma\beta,\beta\lambda-\eta(\delta\eta)^{2^{n-2}-1},\\
&\kappa\gamma-\delta(\eta\delta)^{2^{n-2}-1},
\delta\beta\gamma, \gamma\eta\delta, \eta\kappa\lambda
\rangle.
\end{array}$$
The decomposition matrix for $\Q(3\mathcal{K})$ is given in Figure \ref{fig:decompQ(3K)}.

\subsection{Blocks with quivers $3\mathcal{L}$, $3\mathcal{Q}$ or $3\mathcal{R}$}
\label{ss:quiver3LQR}

By \cite[VIII.2.0]{erd} (resp. by \cite[VII.4]{erd}), there are no blocks $B$ with semidihedral 
(resp. generalized quaternion) defect groups that have Ext quivers $3\mathcal{L}$, 
$3\mathcal{Q}$ or $3\mathcal{R}$.
By \cite[IX.5.4]{erd}, there are also no blocks $B$ with dihedral 
defect groups that have Ext quivers $3\mathcal{L}$, $3\mathcal{Q}$ or $3\mathcal{R}$.


\section{Ordinary characters belonging to tame blocks}
\label{s:ordinarytame}

Assume Hypothesis \ref{hyp:alltheway}. In particular,
$p=2$, $n\ge 2$, and $D$  is dihedral, semidihedral or generalized quaternion. 

For $2\le\ell\le n-1$, define $\zeta_\ell=\zeta^{2^{n-1-\ell}}$, so that $\zeta_\ell$
is a primitive $2^\ell$-th root of unity. 
It follows from \cite[Sect. VII]{brIV}, \cite{brauer2,olsson} and \cite{fong} (see below) that every 
ordinary irreducible 
character of $G$ which belongs to $B$ is realizable over $F(\zeta)$, i.e. it corresponds to
an absolutely irreducible $F(\zeta)G$-module.
In particular, if we are only interested in the ordinary irreducible characters of $G$ belonging to
$B$, we can replace $\xi$ in the paragraph before Remark \ref{rem:fusion} by $\zeta$.

In \cite{brauer2} (resp. \cite{olsson}), the ordinary irreducible characters of $G$ belonging to $B$
were analyzed if $n\ge 3$ and $D$ is dihedral (resp. semidihedral or generalized quaternion).
In the notation of \cite[Sect. 4]{brauer2} (resp. \cite[Sect. 2]{olsson}), this means that we 
are either in Case $(aa)$ or in Case $(ab)$ or $(ba)$, see \cite[Thm. 2]{brauer2} 
(resp. \cite[Thms. 3.14-3.17]{olsson}). Note that Case $(ba)$ can only occur when
$D$ is semidihedral.
In particular, in Case $(aa)$ (resp. Case $(ab)$ or $(ba)$) there are precisely three
(resp. two) isomorphism classes of simple $B$-modules.

\begin{rem}
\label{rem:cases}
\begin{enumerate}
\item[(a)]
If $D$ is dihedral of order 4, i.e. $n=2$, it follows from \cite{erdklein} that $B$ is Morita
equivalent to either $\D(3\mathcal{A})_1$ or $\D(3\mathcal{K})$. Moreover, there are
precisely 4 ordinary irreducible characters belonging to $B$ and they all have height 0.
In the decomposition matrices in Figures \ref{fig:decompD(3A)} and \ref{fig:decompD(3K)},
these characters are $\chi_1,\chi_2,\chi_3,\chi_4$. 
By \cite[Prop. (7D)]{brIV} and \cite{fong}, it follows that all these characters are
realizable over $F$, i.e. they correspond to absolutely irreducible $FG$-modules.

\item[(b)]
If $D$ is quaternion of order 8, i.e. $n=3$, it follows from \cite[p. 220 and Thm. 3.17]{olsson} that
we are in Case $(aa)$ and that there are precisely 4 (resp. 3) ordinary irreducible characters 
of height 0 (resp. $1=n-2$) belonging to $B$. Moreover, $B$ is Morita equivalent
to one of the algebras in  $\{\Q(3\mathcal{A})_2,\Q(3\mathcal{B}),\Q(3\mathcal{K})\}$ and in 
the decomposition matrices in Figures \ref{fig:decompQ(3A)}, \ref{fig:decompQ(3B)} and
\ref{fig:decompQ(3K)}, the ordinary irreducible characters of height 0 (resp. 1) are 
$$\chi_1,\chi_2,\chi_3,\chi_4\qquad \mbox{(resp. $\chi_{5,1},\chi_6,\chi_7$).}$$
By \cite[Prop. 4.2]{olsson} and \cite{fong}, all these characters are
realizable over $F$, i.e. they correspond to absolutely irreducible $FG$-modules.

\item[(c)]
If $D$ is not quaternion of order 8, then there are precisely 4 (resp. $2^{n-2}-1$)
ordinary irreducible characters of height 0 (resp. 1) belonging to $B$. 
If $D$ is dihedral, these are all ordinary irreducible characters belonging to $B$.
If $D$ is semidihedral, there is 0 (resp. 1) additional ordinary irreducible character
of height $n-2$ belonging to $B$ if we are in Case $(ab)$ (resp. Cases $(ba)$ or $(aa)$).
If $D$ is generalized quaternion of order $\ge 16$, there are 
1 (resp. 2) additional ordinary irreducible characters of height $n-2$ belonging to $B$
if we are in Case $(ab)$ (resp. Case $(aa)$). 
In the decomposition matrices in Figures \ref{fig:decompD(2A)}--\ref{fig:decompQ(3K)},
the ordinary irreducible characters of height 0 (resp. 1) are 
$$\chi_1,\chi_2,\chi_3,\chi_4\qquad \mbox{(resp. $\chi_{5,i}$ for $1\le i\le 2^{n-2}-1$), }$$
whereas  the ordinary irreducible characters of
height $n-2$ are $\chi_6$ or $\chi_6,\chi_7$, provided they exist.
\end{enumerate}
\end{rem}

Let $n\ge 3$, and let
$\sigma$ be an element of  order $2^{n-1}$ in $D$.
By \cite{brauer2,olsson}, there exists a block $b_\sigma$ of $kC_G(\sigma)$ with 
$b_\sigma^G=B$ which contains a unique irreducible Brauer character $\varphi^{(\sigma)}$ such that 
the following is true. There is an ordering of $(1,2,\ldots,2^{n-2}-1)$ such that for 
$1\le i\le 2^{n-2}-1$, the generalized decomposition number of $G$ corresponding to
$\sigma$, $\chi_{5,i}$ and $\varphi^{(\sigma)}$ has the form
\begin{equation}
\label{eq:great0}
d_{\chi_{5,i},\,\varphi^{(\sigma)}}^{\,(\sigma)} =
\left\{\begin{array}{ll}
\zeta^{i}+\zeta^{-i}&\mbox{ if $D$ is dihedral or generalized quaternion,}\\
&\mbox{ or if $D$ is semidihedral and $i$ is even,}\\[.2cm]
\zeta^{i}-\zeta^{-i}&\mbox{ if $D$ is semidihedral and $i$ is odd with $i\le2^{n-3}-1$,}
\\[.2cm]
\zeta^{-i}-\zeta^i&\mbox{ if $D$ is semidihedral and $i$ is odd with $i\ge 2^{n-3}+1$.}
\end{array}\right.
\end{equation}
Note that the formulas in (\ref{eq:great0})
for $D$ semidihedral and $i$ odd follow since the $D$-conjugacy classes
of elements of order $2^{n-1}$ in $D$ are represented by
$$\sigma^1, \sigma^3,\ldots,\sigma^{2^{n-3}-1},\quad \sigma^{-(2^{n-3}+1)},\sigma^{-(2^{n-3}+3)},
\ldots,\sigma^{-(2^{n-2}-1)}.$$
If $D$ is quaternion of order 8, we have that 
$d_{\chi,\varphi^{(\sigma)}}^{\,(\sigma)}=\zeta+\zeta^{-1}=0$
for $\chi\in\{\chi_{5,1},\chi_6,\chi_7\}$.

For $2\le \ell\le n-2$ define $\nu_\ell =\zeta_{\ell}+\zeta_{\ell}^{-1}$, and define
\begin{equation}
\label{eq:nu}
\nu_{n-1} =\left\{\begin{array}{ll}
\zeta+\zeta^{-1}&\mbox{  if $D$ is dihedral or generalized quaternion,}\\[.2cm]
\zeta-\zeta^{-1}&\mbox{ if $D$ is semidihedral.} \end{array}\right.
\end{equation}
Note that $W$ contains all roots of unity of order not divisible by $2$. Hence by 
\cite{brauer2,olsson} and by \cite{fong}, the ordinary irreducible characters of height 0 or $n-2$
belonging to $B$ correspond to simple $FG$-modules. On the other hand, the characters 
$\chi_{5,i}$, $i=1,\ldots,2^{n-2}-1$,
fall into $n-2$ Galois orbits $\mathcal{O}_2,\ldots, \mathcal{O}_{n-1}$ under the action of 
$\mathrm{Gal}(F(\nu_{n-1})/F)$. Namely for $2\le\ell\le n-1$, 
$$\mathcal{O}_{\ell}=\{ \chi_{5,2^{n-1-\ell}(2u-1)} \;|\; 1\le u\le 2^{\ell-2}\}.$$
The field generated by the character values of each $\xi_\ell\in\mathcal{O}_\ell$ over $F$ is 
$F(\nu_\ell)$. 
Hence by \cite{fong}, each $\xi_\ell$ corresponds to an absolutely irreducible 
$F(\nu_\ell) G$-module $X_\ell$. 
By \cite[Satz V.14.9]{hup},  this implies that for $2\le\ell\le n-1$, the Schur index of each 
$\xi_\ell\in\mathcal{O}_\ell$ over $F$ is $1$. Hence we obtain $n-2$ non-isomorphic simple 
$FG$-modules $V_2,\ldots,V_{n-1}$
with characters $\rho_2,\ldots, \rho_{n-1}$ satisfying
\begin{equation}
\label{eq:goodchar1}
\rho_\ell =\sum_{\xi_\ell\in\mathcal{O}_\ell}\xi_\ell = 
\sum_{u=1}^{2^{\ell-2}} \chi_{5,2^{n-1-\ell}(2u-1)} 
\qquad\mbox{for $2\le \ell \le n-1$.}
\end{equation}
By \cite[Hilfssatz V.14.7]{hup}, $\mathrm{End}_{FG}(V_\ell)$ is a commutative $F$-algebra isomorphic 
to the field generated over $F$ by the character values of any $\xi_\ell\in\mathcal{O}_\ell$. 
This means
\begin{equation}
\label{eq:goodendos}
\mathrm{End}_{FG}(V_\ell)\cong F(\nu_\ell)\qquad\mbox{for $2\le \ell \le n-1$.}
\end{equation}

Suppose $v_1,\ldots,v_{l(\sigma)}$ form a complete system of representatives of 
$C_G(\sigma)$-con\-ju\-gacy classes of $2$-regular elements in $C_G(\sigma)$ with $v_1=1_G$. 
By (\ref{eq:brauer4}),
for all $1\le i\le 2^{n-2}-1$, the generalized decomposition number of $G$ corresponding to
$\chi_{5,i}$, $\sigma$ and $\varphi^{(\sigma)}$ can be written as a $W$-linear
combination of $\chi_{5,i}(\sigma v_1),\ldots,\chi_{5,i}(\sigma v_{l(\sigma)})$,
say
\begin{equation}
\label{eq:duhh1}
d_{\chi_{5,i},\,\varphi^{(\sigma)}}^{\,(\sigma)} =
\tilde{w}_1\cdot \chi_{5,i}(\sigma v_1)+\cdots+ \tilde{w}_{l(\sigma)}\cdot
\chi_{5,i}(\sigma v_{l(\sigma)})
\end{equation}
for certain $\tilde{w}_1,\ldots,\tilde{w}_{l(\sigma)}\in W$.

By \cite[Thm. 5]{brauer2} and \cite[Prop. 4.6]{olsson}, the characters $\chi_{5,i}$ have the 
same degree $x$ and they are all of height 1 for $1\le i\le 2^{n-2}-1$.
Hence $x=2^{a-n+1}x^*$ where $\#G=2^a\cdot g^*$ and $x^*$ and $g^*$ are odd. Since the 
centralizer $C_G(\sigma)$ contains $\langle \sigma \rangle$, we have $\# C_G(\sigma)=
2^{n-1}\cdot 2^b\cdot m^*$ where $b\ge 0$ and $m^*$ is odd. 

For $1\le j\le l(\sigma)$, let $C_j$ be the conjugacy class in $G$ of $\sigma v_j$, and let 
$t(C_j)\in WG$ be the class sum of $C_j$. Since for all $j$,  the centralizer 
$C_G(\sigma v_j)$ contains $\langle \sigma \rangle$, we have $\# C_G(\sigma v_j)=
2^{n-1}\cdot 2^{b_j}\cdot m_j^*$ where $b_j\ge 0$ and $m_j^*$ is odd. 
We want to determine the action of $t(C_j)$ on $V_\ell$ for $2\le \ell\le n-1$. For this, we identify 
$\mathrm{End}_{FG}(V_\ell)\cong F(\nu_\ell)$ with 
$\mathrm{End}_{F(\nu_\ell) G}(X_\ell)$ for one particular absolutely irreducible 
$F(\nu_\ell) G$-constituent $X_\ell$ of $V_\ell$ with character $\xi_\ell$. 
Using (\ref{eq:goodchar1}), we choose $\xi_\ell=\chi_{5,2^{n-1-\ell}}$. Then
for $2\le \ell \le n-1$, the action of $t(C_j)$ on $V_\ell$ is given as multiplication by 
$\mu_j(\ell)$, where
\begin{equation}
\label{eq:goodone}
\mu_j(\ell)=\frac{\# C_j}{\xi_\ell(1)} \cdot \xi_\ell(\sigma v_j)
=2^{-b_j} \frac{g^*}{m_j^*\cdot x^*}\cdot \xi_\ell(\sigma v_j).
\end{equation}
For $1\le j\le l(\sigma)$, define $w_j=2^{b_j}\frac{m_j^*\cdot x^*}{g^*} \tilde{w}_j$.
Then $w_j\in W$ since $g^*$ is odd, and $w_j$ does not depend on $\ell$. 
By (\ref{eq:great0}) and (\ref{eq:nu}),
$$
d_{\xi_\ell,\,\varphi^{(\sigma)}}^{\,(\sigma)} \;=\;
d_{\chi_{5,2^{n-1-\ell}},\,\varphi^{(\sigma)}}^{\,(\sigma)} \;=\;
\nu_\ell.$$
Therefore, (\ref{eq:duhh1}) and (\ref{eq:goodone}) imply that 
\begin{equation}
\label{eq:duhh2}
\nu_\ell = 
w_1\cdot \mu_1(\ell) + \cdots +w_{l(\sigma)}\cdot \mu_{l(\sigma)}(\ell)
\end{equation}
where $w_1,\ldots,w_{l(\sigma)}\in W$ are independent of $\ell\in\{2,\ldots,n-1\}$. 

\begin{rem}
\label{rem:principal}
If $B$ is a principal block, the above formulas simplify considerably due to the fact that there
is very little fusion of $D$-conjugacy classes in $G$ in this case. More precisely, following Brauer's
arguments in \cite[Sect. VII]{brIII}, suppose $D$ is a Sylow 2-subgroup of $G$ and 
let $S=\langle \sigma \rangle$. If $\sigma^\lambda$ is not in the center of $D$,
then $S$ is a Sylow 2-subgroup of $C_G(\sigma^\lambda)$. Hence if $\sigma^\lambda$ and
$\sigma^\mu$ are conjugate in $G$, we can use Sylow's theorems to see that they are
conjugate in $N_G(S)$. Since $N_G(S)/C_G(S)$ is a 2-group, it must be of order 2.
Thus $N_G(S)$ is generated by $D$ and $C_G(S)$, which implies that 
$\sigma^\lambda$ and $\sigma^\mu$ are conjugate in $D$. Using 
(\ref{eq:fusion!}) together with \cite[Prop. (4A)]{brauer2} and \cite[Prop. 2.10]{olsson},
we obtain that 
\begin{equation}
\label{eq:principal}
\chi_{5,i}(\sigma) = 
d_{\chi_{5,i},\,\varphi^{(\sigma)}}^{\,(\sigma)} \;\varphi^{(\sigma)}(1_G)
\end{equation}
where $\varphi^{(\sigma)}$ is as in (\ref{eq:great0}).
Moreover, since $\varphi^{(\sigma)}$ is the unique Brauer character belonging to the block
$b_\sigma$ of $kC_G(\sigma)$ satisfying $b_\sigma^G=B$, it follows by Brauer's Third Main
Theorem (see e.g. \cite[Thm. 16.1]{alp}) that $b_\sigma$ is the principal block of $kC_G(\sigma)$,
which implies that $\varphi^{(\sigma)}$ is the trivial character of $b_\sigma$.
Putting (\ref{eq:principal}) into (\ref{eq:goodone}) for $j=1$ therefore implies that 
if $\omega=2^{b_1}\frac{m_1^*\cdot x^*}{g^*}$, which lies in $W$, then for all $2\le \ell\le n-1$
$$\nu_\ell = 
d_{\chi_{5,2^{n-1-\ell}},\,\varphi^{(\sigma)}}^{\,(\sigma)}
=\omega\cdot \mu_1(\ell).$$
Note that in \cite[Sect. 3.4]{3sim}, \cite[Sect. 5]{3quat} and \cite[Sect. 3.2]{2sim}, it was assumed that the
formula (\ref{eq:principal}) was also true for non-principal blocks. However, this may not be true
since there could be more fusion of $D$-conjugacy classes in $G$ in this case. 
More precisely, let $Y$ be a full set of representatives of $D$-conjugacy classes  
of the elements of the set $\{\sigma^r\;|\; r \mbox{ odd}\}$,
and let $Y_\sigma$ be the set of all $y\in Y$ which are 
conjugate to $\sigma$ in $G$. 
If $B$ is not principal, then $|Y_\sigma|$ may be strictly greater than 1.
By \cite[Prop. (4A)]{brauer2} and \cite[Prop. 2.10]{olsson}, the set
$\{(y,b_\sigma)\;|\; y\in Y_\sigma\}$ is a system of representatives for the conjugacy classes
of subsections for $B$ such that $y$ is conjugate to $\sigma$ in $G$.
Hence it follows from Remark \ref{rem:fusion} that for non-principal blocks $B$, 
(\ref{eq:principal}) has to be replaced by the formula
$$\chi_{5,i}(\sigma) = \sum_{y\in Y_\sigma}
d_{\chi_{5,i},\,\varphi^{(\sigma)}}^{\,(y)}\;\varphi^{(\sigma)}(1_G).$$
\end{rem}

\begin{dfn}
\label{def:seemtoneed}
Use the notation introduced above, and in particular (\ref{eq:nu}). Assume $n\ge 3$.
\begin{enumerate}
\item[(i)] 
Define
$$q_n(t)=\prod_{\ell=2}^{n-1} \mathrm{min.pol.}_F(\nu_\ell)$$
and let $R'=W[[t]]/(q_n(t))$.
\item[(ii)] Let $Z=\langle \sigma\rangle$, so that $Z$ is a cyclic group of order $2^{n-1}>2$.
Let $\tau:Z\to Z$ be the group automorphism which sends $\sigma$ to $\sigma^{-1}$ if $D$ is
dihedral or generalized quaternion, and which sends $\sigma$ to $\sigma^{-1+2^{n-2}}$ if $D$ is
semidihedral. Then $\tau$ can be extended to a $W$-algebra automorphism of the group ring $WZ$ which will again be denoted by $\tau$. Let $T(\sigma^2)=1+\sigma^2+\sigma^4+\cdots +\sigma^{2^{n-1}-2}$, and define 
$$S'= (WZ)^{\langle \tau\rangle}/\left(T(\sigma^2),\sigma T(\sigma^2)\right).$$
\end{enumerate}
\end{dfn}

\begin{rem}
\label{rem:ohyeah}
The minimal polynomial $\mathrm{min.pol.}_F(\nu_\ell)$ for $2\le\ell\le n-1$ is as follows:
\begin{eqnarray*}
\mathrm{min.pol.}_F(\nu_2)(t)&=&t,\\ 
\mathrm{min.pol.}_F(\nu_{\ell})(t)
&=&\mathrm{min.pol.}_F(\nu_{\ell-1})(t^2-2) \qquad \mbox{for $3\le\ell\le n-2$},\\
\mathrm{min.pol.}_F(\nu_{n-1})(t)
&=&\left\{\begin{array}{ll}
\mathrm{min.pol.}_F(\nu_{n-2})(t^2-2)&\mbox{ if $D$ is dihedral or}\\
&\mbox{ generalized quaternion,}\\[.2cm]
\mathrm{min.pol.}_F(\nu_{n-2})(t^2+2)&\mbox{ if $D$ is semidihedral.}
\end{array}\right.
\end{eqnarray*}
The $W$-algebra $R'$ from Definition \ref{def:seemtoneed} is a complete local 
commutative Noetherian ring with residue field $k$. Moreover, 
\begin{eqnarray*}
F\otimes_W R'&\cong& \prod_{\ell=2}^{n-1} F(\nu_\ell)\quad\mbox{ as $F$-algebras,}\\
k\otimes_W R'&\cong& k[t]/(t^{2^{n-2}-1})\quad\mbox{ as $k$-algebras.}
\end{eqnarray*}
Additionally, 
$R'$ is isomorphic to the $W$-subalgebra of
$\prod_{\ell=2}^{n-1} W[\nu_\ell]$
generated by the element 
$(\nu_\ell)_{\ell=2}^{n-1}$.
\end{rem}

\begin{lemma}
\label{lem:semidihedraltrick}
Using the notation of Definition $\ref{def:seemtoneed}$, there 
is a continuous $W$-algebra isomorphism $h:R'\to S'$ with $h(t)=\sigma+\tau(\sigma)$. 
In particular, $R'$ is isomorphic to a subquotient algebra of the group algebra $WD$.

Moreover, if $D$ is dihedral or semidihedral, then the ring $W[[t]]/(t\,q_n(t),2\,q_n(t))$
is also isomorphic to a subquotient algebra of the group algebra $WD$.
\end{lemma}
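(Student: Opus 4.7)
The approach is to decompose $F\otimes_W WZ$ via cyclotomic factorization and to establish an explicit identity for $q_n(\sigma+\tau(\sigma))$ in $B:=(WZ)^{\langle\tau\rangle}$ by a $2$-adic computation. Since $Z=\langle\sigma\rangle$ is cyclic of order $2^{n-1}$, the cyclotomic decomposition gives $F\otimes_W WZ\cong\prod_{\ell=0}^{n-1}F(\zeta_\ell)$. A case-check on the defining formulas shows that $\tau$ acts on the $\ell$-th factor by $\zeta_\ell\mapsto\zeta_\ell^{-1}$ for $\ell\le n-2$ and also for $\ell=n-1$ in the dihedral or generalized quaternion case, while in the semidihedral case $\tau(\zeta_{n-1})=-\zeta_{n-1}^{-1}$ (because $z=\sigma^{2^{n-2}}$ acts as $-1$ in the top factor). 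Taking fixed points yields
\[F\otimes_W B \;\cong\; F\oplus F\oplus\prod_{\ell=2}^{n-1}F(\nu_\ell),\]
under which $x:=\sigma+\tau(\sigma)$ corresponds to $(2,-2,\nu_2,\ldots,\nu_{n-1})$. Writing $e_\ell$ for the primitive idempotent of the $\ell$-th factor, one computes directly $T(\sigma^2)=2^{n-2}(e_0+e_1)$ and $\sigma T(\sigma^2)=2^{n-2}(e_0-e_1)$, so $F\otimes_W S'\cong\prod_{\ell=2}^{n-1}F(\nu_\ell)$, matching $F\otimes_W R'$ from Remark \ref{rem:ohyeah} with $x\leftrightarrow t$.

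The core step, and the main obstacle, is the identity $q_n(x)=c\cdot\sigma T(\sigma^2)$ in $B$ with $c\in W^\times$. Since $q_n$ vanishes on each $\nu_\ell$ for $\ell\ge 2$, the image of $q_n(x)$ in $F\otimes_W B$ lies in $Fe_0\oplus Fe_1$ with components $q_n(\pm 2)$. The recursion $p_\ell(t)=p_{\ell-1}(t^2-2)$ with $p_2(t)=t$ gives inductively $p_\ell(2)=2$ for $2\le\ell\le n-2$, so $q_n(2)=2^{n-2}$ in the dihedral/quaternion case and $q_n(2)=2^{n-3}\cdot p_{n-2}(6)$ in the semidihedral case. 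Since each $p_\ell$ with $\ell\ge 3$ is even in $t$ while $p_2(t)=t$, the polynomial $q_n$ is odd, whence $q_n(-2)=-q_n(2)$; solving for $q_n(x)$ in terms of $T(\sigma^2)$ and $\sigma T(\sigma^2)$ over $F$ yields
\[q_n(x)\;=\;c\cdot\sigma T(\sigma^2),\qquad c:=\frac{q_n(2)-q_n(-2)}{2^{n-1}}\in W,\]
and this lifts from $F\otimes_W B$ to $B$ because $B$ is $W$-torsion-free. Reduction mod $2$ shows $c\in W^\times$: in the dihedral/quaternion case $c=1$, and in the semidihedral case a parallel $2$-adic induction from $p_2(6)=6\equiv 2\pmod 4$, using the implication $t\equiv 2\pmod 4\Rightarrow t^2-2\equiv 2\pmod 4$, shows $p_{n-2}(6)\equiv 2\pmod 4$, whence $c\equiv 1\pmod 2$. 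Given this identity, $t\mapsto x$ descends to a continuous $W$-algebra map $h:R'\to S'$; surjectivity follows because the relations $T(\sigma^2)=0$ and $\sigma T(\sigma^2)=0$ in $S'$ express $z=\sigma^{2^{n-2}}$ and every $\tau$-orbit sum of an element of $Z$ as a polynomial in $x$ via a Chebyshev-type recursion. Since $F\otimes h$ is an isomorphism and $R'$ is $W$-torsion-free, $h$ is an isomorphism, realizing $R'\cong S'$ as a subquotient of $WZ\subseteq WD$.

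For the final assertion, assume $D$ is dihedral or semidihedral and set $I':=(T(\sigma^2),\,2\sigma T(\sigma^2))\subset B$; then $B/I'$ is a quotient of the subring $B\subset WZ\subset WD$. Using the identity $x\cdot T(\sigma^2)=2\sigma T(\sigma^2)$ in $B$ (which holds in both cases since $\tau(\sigma)\cdot T(\sigma^2)=\sigma T(\sigma^2)$) together with the identity above, one obtains $x\,q_n(x)=2c\,T(\sigma^2)\in I'$ and $2\,q_n(x)\in I'$, yet $q_n(x)=c\,\sigma T(\sigma^2)\notin I'$ since $\sigma T(\sigma^2)\notin 2B$. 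Hence $t\mapsto x$ defines a continuous $W$-algebra map $h':W[[t]]/(t\,q_n(t),\,2\,q_n(t))\to B/I'$. Comparing the short exact sequences
\[0 \to k \to W[[t]]/(tq_n,\,2q_n) \to R' \to 0, \qquad 0 \to I/I' \to B/I' \to S' \to 0,\]
the right vertical is the isomorphism from the previous paragraph, and the left vertical sends the generator $q_n(t)$ of the left $k$ to $c\,\sigma T(\sigma^2)$, a unit multiple of the generator of the cyclic group $I/I'=W\sigma T(\sigma^2)/2W\sigma T(\sigma^2)\cong k$. Both verticals are isomorphisms, so by the five-lemma $h'$ is an isomorphism, realizing $W[[t]]/(t\,q_n(t),2\,q_n(t))$ as the subquotient $B/I'$ of $WD$.
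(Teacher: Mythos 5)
Your argument is correct, and its core -- the identity $q_n(\sigma+\tau(\sigma))=c\,\sigma T(\sigma^2)$ in $(WZ)^{\langle\tau\rangle}$ with $c$ odd, obtained by embedding $WZ$ into a product of cyclotomic rings -- is the same computation the paper performs; your constant $c=p_{n-2}(6)/2$ with the mod-$4$ induction is equivalent to the paper's recursion $c_4=3$, $c_n=2c_{n-1}^2-1$. You do diverge in two places. First, you handle the dihedral and generalized quaternion cases by the same uniform idempotent computation, whereas the paper only writes out the semidihedral case and cites the author's earlier work for the other two; your version is self-contained. Second, for the last assertion the paper introduces $\Theta=(WZ)^{\langle\tau\rangle}/\left(T(\sigma^2)-\sigma T(\sigma^2)\right)$, proves $W[[t]]/((t-2)q_n(t))\cong\Theta$ by a rank count, and then obtains $W[[t]]/(t\,q_n(t),2\,q_n(t))$ as a further quotient; you instead realize that ring on the nose as $(WZ)^{\langle\tau\rangle}/I'$ with $I'=(T(\sigma^2),2\sigma T(\sigma^2))$, via a five-lemma comparison. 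Both routes work; yours identifies the subquotient exactly, while the paper's sidesteps any analysis of the torsion ideal $I/I'$.

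One step needs more justification: the claims that $\sigma T(\sigma^2)\notin I'$ and that $I/I'\cong k$. The reason you give, that $\sigma T(\sigma^2)\notin 2B$, does not by itself rule out $\sigma T(\sigma^2)\in B\,T(\sigma^2)+2B\,\sigma T(\sigma^2)$, since a priori $\sigma T(\sigma^2)$ could lie in $B\,T(\sigma^2)$ modulo $2$. The correct argument is: for $b=\sum_j b_j\sigma^j\in B=(WZ)^{\langle\tau\rangle}$ one has $b\,T(\sigma^2)=\epsilon_0(b)T(\sigma^2)+\epsilon_1(b)\sigma T(\sigma^2)$ and $b\,\sigma T(\sigma^2)=\epsilon_1(b)T(\sigma^2)+\epsilon_0(b)\sigma T(\sigma^2)$, where $\epsilon_0(b)$ (resp.\ $\epsilon_1(b)$) is the sum of the $b_j$ over even (resp.\ odd) $j$, and $T(\sigma^2),\sigma T(\sigma^2)$ are $W$-independent since they have disjoint supports. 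Because $\tau$ has no fixed points among the odd powers of $\sigma$ (in the dihedral case $j\equiv-j$ forces $j$ even; in the semidihedral case $j\equiv-j+2^{n-2}$ forces $j\equiv 2^{n-3}$, even for $n\ge4$), the odd powers pair up into free $\tau$-orbits, so $\epsilon_1(b)\in 2W$ for every $b\in B$. Hence every element of $I'$ has $\sigma T(\sigma^2)$-coefficient in $2W$, which gives both $\sigma T(\sigma^2)\notin I'$ and $I/I'\cong W/2W=k$ generated by the class of $\sigma T(\sigma^2)$. With this inserted, your proof is complete.
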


\begin{proof}
If $D$ is dihedral or generalized quaternion, this follows from \cite[Lemma 2.3.6]{3sim} and 
\cite[Lemma 5.3]{3quat}.

For the remainder of the proof, assume that $D$ is semidihedral (in particular, $n\ge 4$). 
Let $J=\sigma^{2^{n-2}}$ so that $\tau(\sigma)=J\sigma^{-1}$.
Note that $(WZ)^{\langle \tau\rangle}$ is generated as a $W$-algebra
by $(\sigma+J\sigma^{-1})$ and $J$. Moreover, $(WZ)^{\langle \tau\rangle}$ is a free 
$W$-module of rank $2^{n-2}+1$ with $W$-basis given by
\begin{equation}
\label{eq:basiss}
\begin{array}{c}
(\sigma^{\pm 1} + J\sigma^{\mp 1}),(\sigma^{\pm 3}+J\sigma^{\mp 3}),\ldots,
(\sigma^{\pm (2^{n-3}-1)}+J\sigma^{\mp(2^{n-3}-1)}),\\
1,J, (\sigma^2+\sigma^{-2}),(\sigma^4+\sigma^{-4}),\ldots, 
(\sigma^{2^{n-2}-2}+\sigma^{-(2^{n-2}-2)}).
\end{array}
\end{equation}
Note that the $W$-sugbalgebra of $(WZ)^{\langle \tau\rangle}$ generated by 
$(\sigma+J\sigma^{-1})$ is a free $W$-module of the same rank $2^{n-2}+1$ and with
almost the same $W$-basis as in (\ref{eq:basiss}) except that $J$ must be replaced by $2J$. 
It follows that $S'= (WZ)^{\langle \tau\rangle}/\left(T(\sigma^2),\sigma T(\sigma^2)\right)$
is generated as a $W$-algebra by the image of $(\sigma+J\sigma^{-1})$ in $S'$. 
Hence we have a surjective $W$-algebra homomorphism
$$f:\quad W[[t]] \to S'=(WZ)^{\langle \tau\rangle}/\left(T(\sigma^2),\sigma T(\sigma^2)\right)$$
sending $t$ to the image in $S'$ of $\sigma+J\sigma^{-1}=\sigma+\tau(\sigma)$. 
Using the injective $W$-algebra homomorphism 
\begin{eqnarray*}
\iota: \qquad WZ &\to& W\times W \times \prod_{\ell=2}^{n-1} W[\zeta_\ell]\\
\sigma &\mapsto& \left(1\;,\;-1\;\;\;,\;\left(\zeta_\ell\right)_{\ell=2}^{n-1}\;\right)
\end{eqnarray*}
it is straightforward to prove that there exists an odd integer $c_n$ such that
$q_n(\sigma+J\sigma^{-1})=c_n\,\sigma\,T(\sigma^2)$. More precisely, $c_4=3$
and $c_n=2\, c_{n-1}^2-1$ for $n\ge 5$.
Thus $q_n(t)$  lies in the kernel of $f$. Since
both $R'=W[[t]]/(q_n(t))$ and $S'$ are free as $W$-modules of the same rank $2^{n-2}-1$, 
it follows that $R'=W[[t]]/(q_n(t))\cong S'$ as $W$-algebras.

To finish the proof of Lemma \ref{lem:semidihedraltrick},
it suffices to show that the ring $W[[t]]/((t-2)\,q_n(t))$ is isomorphic to a subquotient algebra of 
$WD$. Define
$$\Theta=(WZ)^{\langle\tau\rangle}/\left(T(\sigma^2)- \sigma T(\sigma^2)\right).$$
Then $\Theta$ is isomorphic to a subquotient algebra of $WD$ and it is generated as a 
$W$-algebra by the image of $(\sigma+J\sigma^{-1})$ in $\Theta$. Moreover, $\Theta$ is a free 
$W$-module of rank $2^{n-2}$, since the ideal $\left(T(\sigma^2)- \sigma T(\sigma^2)\right)$ is 
generated over $W$ by $T(\sigma^2)- \sigma T(\sigma^2)$. 
Define a surjective $W$-algebra homomorphism
$\theta:W[[t]]\to \Theta$ by sending $t$ to the image in $\Theta$ of $\sigma+J\sigma^{-1}
=\sigma+\tau(\sigma)$.  
Using the above calculations, we see that
\begin{eqnarray*}
\theta((t-2)\,q_n(t)) &=& ((\sigma+J\sigma^{-1})-2)\,q_n(\sigma+J\sigma^{-1}) \\
&=& ((\sigma+J\sigma^{-1})-2)\,c_n\,\sigma T(\sigma^2)\\
&=& 2\,c_n\,\left[ T(\sigma^2)- \sigma T(\sigma^2)\right]
\end{eqnarray*}
which is zero in $\Theta$. Hence $(t-2)\,q_n(t)$ lies in the kernel of $\theta$. 
Since both $W[[t]]/((t-2)\,q_n(t))$ and $\Theta$ are free over $W$ of rank $2^{n-2}$, it follows 
that $W[[t]]/((t-2)\,q_n(t))$ is isomorphic to $\Theta$, which completes the proof of Lemma 
\ref{lem:semidihedraltrick}.
\end{proof}

The following result gives a correction of \cite[Lemma 5.4]{3quat} and generalizes the corrected
result to all tame blocks with at least two isomorphism classes of simple modules.
\begin{lemma}
\label{lem:quaternionargument}
Assume Hypothesis $\ref{hyp:alltheway}$ and use the notation introduced above, and
in particular $(\ref{eq:nu})$, $(\ref{eq:goodchar1})$ and Definition $\ref{def:seemtoneed}$.
Assume $n\ge 3$.
If $D$ is quaternion of order $8$, let $U'$ be a $WG$-module which is free over $W$ and 
whose $F$-character is either $\chi_{5,1}$, or $\chi_6$, or $\chi_7$. If $D$ is not quaternion of
order $8$,
let $U'$ be a $WG$-module which is free over $W$ and whose $F$-character is equal to
$$\sum_{\ell=2}^{n-1} \rho_\ell = \sum_{i=1}^{2^{n-2}-1}\chi_{5,i}.$$
\begin{enumerate}
\item[(i)]
There exists a $WG$-module 
endomorphism $\alpha$ of $U'$ such that the $W$-algebra $W[\alpha]$ generated by $\alpha$
is isomorphic to $R'$.

\item[(ii)] 
Suppose either that $n=3$, or that $n\ge 4$ and
$\mathrm{End}_{kG}(U'/2U')\cong R'/2R'$ and $U'/2U'$ is free as a module
for $\mathrm{End}_{kG}(U'/2U')$ of rank $\mathrm{deg}(\chi_{5,1})$.
Then $\mathrm{End}_{WG}(U')=W[\alpha]\cong R'$ and $U'$ is free as a module 
for $\mathrm{End}_{WG}(U')$.
\end{enumerate}
\end{lemma}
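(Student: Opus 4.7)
The plan is to construct $\alpha$ as a $W$-linear combination of central class-sum operators using \eqref{eq:duhh2}, and then identify the resulting $W$-algebra with $R'$ via the description in Remark \ref{rem:ohyeah}. First dispose of the case $D$ quaternion of order $8$, where $n=3$ forces $q_3(t) = \mathrm{min.pol.}_F(\nu_2)(t) = t$ and so $R' = W$; part (i) is trivial with $\alpha = 0$. For (ii) in this case, the $F$-character of $U'$ is absolutely irreducible by Remark \ref{rem:cases}(b), so $\mathrm{End}_{FG}(F\otimes_W U') = F$, and the $W$-order $\mathrm{End}_{WG}(U')$ inside $F$ equals $W = W[\alpha]$; freeness of $U'$ over $W$ is by assumption.

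For the remainder assume $D$ is not quaternion of order $8$. For each $1\le j\le l(\sigma)$ the class sum $t(C_j)\in Z(WG)$ defines a $WG$-endomorphism of $U'$, and by \eqref{eq:goodone} it acts on the simple summand $V_\ell$ of $F\otimes_W U'$ as multiplication by $\mu_j(\ell)\in F(\nu_\ell)\cong\mathrm{End}_{FG}(V_\ell)$. Set
$$\alpha \;=\; \sum_{j=1}^{l(\sigma)} w_j\, t(C_j)\;\in\;\mathrm{End}_{WG}(U').$$
By \eqref{eq:duhh2}, $\alpha$ acts on $V_\ell$ as multiplication by $\nu_\ell$. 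Under the canonical identification
$$F\otimes_W \mathrm{End}_{WG}(U') \;=\; \mathrm{End}_{FG}\Bigl(\bigoplus_{\ell=2}^{n-1}V_\ell\Bigr) \;=\; \prod_{\ell=2}^{n-1} F(\nu_\ell),$$
the element $\alpha$ corresponds to $(\nu_\ell)_{\ell=2}^{n-1}$, so $W[\alpha]$ is the $W$-subalgebra of $\prod_\ell W[\nu_\ell]$ generated by $(\nu_\ell)_\ell$. By the last assertion of Remark \ref{rem:ohyeah} this is isomorphic to $R'$, proving (i).

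For (ii) when $n\ge 4$, both $W[\alpha]$ and $\mathrm{End}_{WG}(U')$ are $W$-orders of $W$-rank $2^{n-2}-1$ in $\prod_\ell F(\nu_\ell)$, with $W[\alpha]\subseteq \mathrm{End}_{WG}(U')$. Since $U'$ is $W$-free, the reduction map $\mathrm{End}_{WG}(U')/2\,\mathrm{End}_{WG}(U') \hookrightarrow \mathrm{End}_{kG}(U'/2U')$ is injective; together with $W[\alpha]/2W[\alpha]\cong R'/2R'$ and the hypothesis $\mathrm{End}_{kG}(U'/2U')\cong R'/2R'$, the chain
$$R'/2R' \;\cong\; W[\alpha]/2W[\alpha] \;\hookrightarrow\; \mathrm{End}_{WG}(U')/2\,\mathrm{End}_{WG}(U') \;\hookrightarrow\; \mathrm{End}_{kG}(U'/2U') \;\cong\; R'/2R'$$
collapses to a sequence of equalities, since each of these $k$-vector spaces has dimension $2^{n-2}-1$. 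Nakayama's lemma applied to the finitely generated $W$-module $\mathrm{End}_{WG}(U')/W[\alpha]$ then gives $\mathrm{End}_{WG}(U') = W[\alpha]\cong R'$. For freeness, lift a free $R'/2R'$-basis of $U'/2U'$ of rank $\deg(\chi_{5,1})$ arbitrarily to $U'$; Nakayama produces a surjection $(R')^{\deg(\chi_{5,1})}\twoheadrightarrow U'$, and a rank count shows both sides have the same $W$-rank $(2^{n-2}-1)\deg(\chi_{5,1}) = \dim_F(F\otimes_W U')$, so the surjection is an isomorphism over $F$ and hence, by $W$-torsion-freeness of the source, an isomorphism.

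The key observation underpinning the construction is that the coefficients $w_j$ in \eqref{eq:duhh2} are independent of $\ell$, which is what allows one central element to simultaneously interpolate all eigenvalues $\nu_\ell$ and so to generate the full ring $R'$ rather than some proper subring. The main obstacle is then the integral upgrade in (ii): passing from the $F$-rational identification to an equality of $W$-orders requires the mod-$2$ hypothesis on $\mathrm{End}_{kG}(U'/2U')$, without which the inclusion $W[\alpha]\subseteq\mathrm{End}_{WG}(U')$ need not be an equality.
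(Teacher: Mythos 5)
Your construction of $\alpha$ in part (i) and your treatment of the case $n=3$ match the paper's proof. Your route to the surjectivity of the reduction map $\mathrm{End}_{WG}(U')\to\mathrm{End}_{kG}(U'/2U')$ (injectivity of $\mathrm{End}_{WG}(U')/2\,\mathrm{End}_{WG}(U')\hookrightarrow\mathrm{End}_{kG}(U'/2U')$ plus a dimension count) is also fine and is essentially what the paper's $\HH^1$ exact sequence accomplishes. The freeness argument at the end is likewise sound once the rest is in place.

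However, there is a genuine gap in your proof that $\mathrm{End}_{WG}(U')=W[\alpha]$. The first arrow in your chain,
$$W[\alpha]/2W[\alpha]\;\longrightarrow\;\mathrm{End}_{WG}(U')/2\,\mathrm{End}_{WG}(U'),$$
is asserted to be injective, but an inclusion $W[\alpha]\subseteq\Lambda:=\mathrm{End}_{WG}(U')$ of $W$-orders of the same rank induces an injection modulo $2$ if and only if the torsion $W$-module $\Lambda/W[\alpha]$ vanishes, i.e.\ if and only if $W[\alpha]=\Lambda$ — which is exactly what you are trying to prove. (Concretely: if $y\in\Lambda\setminus W[\alpha]$ with $2y\in W[\alpha]$, then $2y$ is a nonzero class in $W[\alpha]/2W[\alpha]$ mapping to zero. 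Such proper finite-index suborders exist, e.g.\ $W+2W[\nu]\subsetneq W[\nu]$, so equality of ranks does not save you.) Thus the "collapse to equalities" is circular at its first step. What is actually needed — and what the paper supplies — is a proof that the image $\overline{\alpha}$ of $\alpha$ in $\mathrm{End}_{kG}(U'/2U')\cong k[t]/(t^{2^{n-2}-1})$ generates it as a $k$-algebra, i.e.\ that $\overline{\alpha}$ does not lie in the square of the maximal ideal. The paper does this by first producing a genuine generator $\beta$ of $\mathrm{End}_{WG}(U')$ (lifting a generator of $\mathrm{End}_{kG}(\overline{U'})$ via the surjectivity you also established), writing $\alpha=q(\beta)$ with $q(X)=2\tilde a_0+a_1X+\cdots+a_dX^d$, and showing $a_1\notin 2W$: projecting to $W[\nu_{n-1}]$, one uses that for $n\ge 4$ the extension is ramified with $2\in\mathfrak{m}^2$ (since $\sqrt{2}$ or $\sqrt{-2}$ lies in $\mathfrak{m}$), so if $a_1$ were even then every term of $q(\beta)$ would lie in $\mathfrak{m}^2$, contradicting $\pi_{n-1}(\alpha)=\nu_{n-1}\notin\mathfrak{m}^2$. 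This ramification argument is the missing idea; without it, $W[\alpha]$ could a priori be a proper suborder of $\mathrm{End}_{WG}(U')$ of the same rank.
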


\begin{proof}
Suppose first that $n=3$. Then $q_n(t)=t$ and $R'\cong W$. Hence 
we can choose the $WG$-module endomorphism $\alpha$ of $U'$ to be the zero endomorphism. 
Since $U'$ is free as a $W$-module and $F\otimes_W\mathrm{End}_{WG}(U')\cong
\mathrm{End}_{FG}(F\otimes_W U')\cong F$, 
it follows that $\mathrm{End}_{WG}(U')\cong W \cong R'$.

For the remainder of the proof, assume $n\ge 4$.
We first construct a $WG$-module endomorphism $\alpha$ of
$U'$ as in part (i) of the lemma. 
As before, let $\sigma\in D$ be an element of order $2^{n-1}$ and 
let $\{v_1,\ldots,v_{l(\sigma)}\}$ be a complete system of representatives of 
$C_G(\sigma)$-conjugacy classes of $2$-regular elements in $C_G(\sigma)$. 
For $1\le j\le l(\sigma)$, let $C_j$ be the conjugacy class in $G$ of $\sigma v_j$, and let 
$t(C_j)\in WG$ be the class sum of $C_j$. 

Let $1\le j\le l(\sigma)$. Since $t(C_j)$ lies in the center of $WG$, multiplication by $t(C_j)$
defines a $WG$-module endomorphism  of $U'$. Since $U'$ is free as a $W$-module,
$\mathrm{End}_{WG}(U')$ can naturally be identified with a $W$-subalgebra of 
$$F\otimes_W\mathrm{End}_{WG}(U')\cong
\mathrm{End}_{FG}(F\otimes_W U')\cong \prod_{\ell=2}^{n-1}\mathrm{End}_{FG}(V_\ell)
\cong\prod_{\ell=2}^{n-1}F(\nu_\ell).$$
Therefore, $t(C_j)$ acts on $U'$ as multiplication by a scalar 
$\lambda_j$ in the maximal $W$-order $\prod_{\ell=2}^{n-1} W[\nu_\ell]$ in
$\prod_{\ell=2}^{n-1}F(\nu_\ell)$. 
Moreover, $\lambda_j$ can be determined from the action of $t(C_j)$ on 
$F\otimes_W U'\cong\bigoplus_{\ell=2}^{n-1}V_\ell$, which implies by (\ref{eq:goodone})
that $\lambda_j=\left(\mu_j(\ell)\right)_{\ell=2}^{n-1}$.
By (\ref{eq:duhh2}), there exist elements $w_1,\ldots, w_{l(\sigma)}
\in W$ such that
$$\nu_\ell = w_1\cdot \mu_1(\ell) + \cdots + w_{l(\sigma)} \cdot \mu_{l(\sigma)}(\ell)$$
for $2\le \ell \le n-1$.
Define $\alpha$ to be the $WG$-module endomorphism of $U'$ given by multiplication by
the element $\sum_{j=1}^{l(\sigma)} w_j \,t(C_j)$ in the center of $WG$.
Then $\alpha$ acts on $U'$ as multiplication by the scalar 
$(\nu_\ell)_{\ell=2}^{n-1}\in \prod_{\ell=2}^{n-1} W[\nu_\ell]$, which implies 
$W[\alpha]\cong R'$ (see Remark \ref{rem:ohyeah}).

Let now $\overline{U'}=U'/2U'$ and suppose
that $\mathrm{End}_{kG}(\overline{U'})\cong R'/2R'\cong k[t]/(t^{2^{n-2}-1})$ 
and that $\overline{U'}$ is free as a module
for $\mathrm{End}_{kG}(U'/2U')$ of rank $x=\mathrm{deg}(\chi_{5,1})$.
Note that $x=\mathrm{deg}(\chi_{5,i})$ for all $1\le i\le 2^{n-2}-1$ and that
$\mathrm{dim}_{F(\nu_\ell)}V_\ell = x$ for all $2\le \ell \le n-1$, which implies that
$F\otimes_W U'$ is a free module of rank $x$ for $\mathrm{End}_{FG}(F\otimes_W U')$.
We have a short exact sequence of $W$-modules
$$\xymatrix
{0\ar[r] &\mathrm{End}_W(U') \ar[r]^{\cdot 2}& \mathrm{End}_W(U')
\ar[r]^{\mathrm{mod}\;2} & \mathrm{End}_k(\overline{U'})\ar[r]&0.}$$
Considering the $G$-action on these modules, we obtain an exact sequence of $W$-modules
\begin{equation}
\label{eq:cohom}
\xymatrix
{0\ar[r] &\mathrm{End}_{WG}(U') \ar[r]^{\cdot 2}& \mathrm{End}_{WG}(U')
\ar[r]^{\mathrm{mod}\;2} & \mathrm{End}_{kG}(\overline{U'})\ar[r]&
\HH^1(G,\mathrm{End}_W(U')).}
\end{equation}
Because $\mathrm{End}_{WG}(U')$ is a $W$-submodule of the free $W$-module
$\mathrm{End}_W(U')$, it follows that $\mathrm{End}_{WG}(U')$ is a free
$W$-module of rank 
$$\mathrm{dim}_F (F\otimes_W\mathrm{End}_{WG}(U')) =
\mathrm{dim}_F\; \mathrm{End}_{FG}(F\otimes_W U')=2^{n-2}-1.$$
Since by assumption $\mathrm{dim}_k\,
\mathrm{End}_{kG}(\overline{U'})=\mathrm{dim}_k (R'/2R')=2^{n-2}-1$, it follows that
(\ref{eq:cohom}) induces a short exact sequence of $W$-modules
\begin{equation}
\label{eq:cohom1}
\xymatrix
{0\ar[r] &\mathrm{End}_{WG}(U') \ar[r]^{\cdot 2}& \mathrm{End}_{WG}(U')
\ar[r]^{\mathrm{mod}\;2} & \mathrm{End}_{kG}(\overline{U'})\ar[r]&0.}
\end{equation}
Let $\overline{\beta}$ be a generator of $\mathrm{End}_{kG}(\overline{U'})$ as a 
$k$-algebra. By (\ref{eq:cohom1}), there exists $\beta\in \mathrm{End}_{WG}(U')$
whose induced $kG$-module endomorphism of $\overline{U'}$ is equal to
$\overline{\beta}$. Using Nakayama's lemma, we see that $\mathrm{End}_{WG}(U')=
W[\beta]$. Since the maximal ideal $\mathfrak{m}_{W[\beta]}$ of $W[\beta]$ is generated by 2 and 
$\beta$ and the maximal ideal $\mathfrak{m}_{k[\overline{\beta}]}$ of $k[\overline{\beta}]$ is 
generated by $\overline{\beta}$, it follows that
$$U'/\mathfrak{m}_{W[\beta]}U' \cong \overline{U'}/\overline{\beta}(\overline{U'})
\cong  \overline{U'}/\mathfrak{m}_{k[\overline{\beta}]}\overline{U'}$$
where the latter has $k$-dimension $x=\mathrm{deg}(\chi_{5,1})$ by assumption.
By Nakayama's lemma, we can lift a $k$-basis 
$\{\overline{s}_1,\ldots, \overline{s}_x\}$ of $U'/\mathfrak{m}_{W[\beta]}U'$  
to a set  $\{s_1,\ldots, s_x\}$ of generators of $U'$ over $W[\beta]=\mathrm{End}_{WG}(U')$. 
Because $F\otimes_W U'$ is a free module of rank $x$ for 
$\mathrm{End}_{FG}(F\otimes_W U')\cong F\otimes_W \mathrm{End}_{WG}(U')$,
it follows that $s_1,\ldots, s_x$ are linearly independent over $\mathrm{End}_{WG}(U')$. 
Hence $U'$ is free as a module for $\mathrm{End}_{WG}(U')$ of rank
$x=\mathrm{deg}(\chi_{5,1})$.

It remains to show that $\mathrm{End}_{WG}(U')=W[\alpha]$, i.e. we need to show 
$W[\alpha]=W[\beta]$. We identify both $W[\alpha]$ and $W[\beta]$ with $W$-subalgebras of  the 
maximal $W$-order $\prod_{\ell=2}^{n-1} W[\nu_\ell]$ in
$\prod_{\ell=2}^{n-1}F(\nu_\ell)\cong F\otimes_W \mathrm{End}_{WG}(U')$. 
Since $W[\alpha]\subseteq W[\beta]$, there exists a polynomial $q(X)\in W[X]$ such that 
$\alpha=q(\beta)$. Because $\alpha$ is not a unit, the constant coefficient of $q(X)$ must
be divisible by 2. Write
$$q(X)=2\tilde{a}_0+a_1X+a_2X^2+\cdots+a_dX^d$$
for certain $\tilde{a}_0,a_1,\ldots,a_d\in W$ and $d\ge 1$. 
Suppose $a_1=2\tilde{a}_1$ for some $\tilde{a}_1\in W$. Consider the natural projection
$$\pi_{n-1}:\prod_{\ell=2}^{n-1} W[\nu_\ell] \to W[\nu_{n-1}],$$
and let $\mathfrak{m}$ be the maximal ideal of $W[\nu_{n-1}]$. Since $n\ge 4$, either
$\sqrt{2}$ or $\sqrt{-2}$ lies in $\mathfrak{m}$. Hence $2\in\mathfrak{m}^2$. 
Since $2\pi_{n-1}(\beta),\pi_{n-1}(\beta)^2,\ldots,\pi_{n-1}(\beta)^d$ all lie in 
$\mathfrak{m}^2$, it follows that $\pi_{n-1}(\alpha)\in\mathfrak{m}^2$. However, 
we have seen that $\pi_{n-1}(\alpha)=\nu_{n-1}\not\in\mathfrak{m}^2$. Hence
$a_1$ is not divisible by 2. But then the $kG$-module endomorphism $\overline{\alpha}$ of 
$\overline{U'}$ which is induced by $\alpha$ has the form
$$\overline{\alpha}=\overline{a}_1\overline{\beta} + \overline{a}_2\overline{\beta}^2 + \cdots +
\overline{a}_d \overline{\beta}^d$$
where $\overline{a}_1\in k^*$ and $\overline{a}_2,\ldots,\overline{a}_d\in k$. This means that 
$k[\overline{\alpha}]=k[\overline{\beta}]=\mathrm{End}_{kG}(\overline{U'})$, which implies by Nakayama's
lemma that $\mathrm{End}_{WG}(U')=W[\alpha]$.
\end{proof}


\section{Universal deformation rings}
\label{s:udr}

Assume Hypothesis \ref{hyp:alltheway} and the notation introduced in Section \ref{s:tame}
and Section \ref{s:ordinarytame}. In particular,
$p=2$, $n\ge 2$, and $D$  is dihedral, semidihedral or generalized quaternion. 
In this section, we want to determine $R(G,V)$ for any 
maximally ordinary $kG$-module
$V$ in the sense of Definition \ref{def:maximallyordinary}.

We first determine all indecomposable $kG$-modules $V$ belonging to $B$ whose stable 
endomorphism rings are isomorphic to $k$ and whose Brauer characters are restrictions
of ordinary irreducible characters of height 1. By Remark \ref{rem:cases}, the ordinary irreducible 
characters of height 1 belonging to $B$ are (using the notation from Section \ref{s:ordinarytame}):
\begin{itemize}
\item none if $n=2$,
\item $\chi_{5,1},\chi_6,\chi_7$ if $D$ is quaternion of order 8, and
\item $\chi_{5,i}$ for all $1\le i\le 2^{n-2}-1$ if $n\ge 3$ and $D$ is not quaternion of order 8.
\end{itemize}

\begin{lemma}
\label{lem:biglist}
Assume Hypothesis $\ref{hyp:alltheway}$, and assume $n\ge 3$. 
Let $\Lambda=kQ/I$ be a basic algebra such that
$B$ is Morita equivalent to $\Lambda$. For each vertex $j$ in $Q$, let $T_j$ denote the
simple $B$-module corresponding to the simple $\Lambda$-module belonging to $j$.
Let $V$ be an indecomposable $kG$-module 
belonging to $B$ such that $\underline{\mathrm{End}}_{kG}(V)\cong k$ and such that
the Brauer character of $V$ is equal to the restriction to the $2$-regular conjugacy classes 
of an ordinary irreducible character of $G$ of height $1$.
\begin{enumerate}
\item[(a)] Suppose $D$ is quaternion of order $8$.
If $Q=3\mathcal{A}$ or $Q=3\mathcal{B}$ then $V$ is either isomorphic to $T_1$ or $T_2$, or
$V$ is a uniserial module of length $4$ whose radical quotient or socle is isomorphic to $T_0$.
If $Q=3\mathcal{K}$ then $V$ is an arbitrary uniserial module 
of length $2$.
\item[(b)] Suppose $D$ is not quaternion of order $8$.
\begin{enumerate}
\item[(i)] If $Q=2\mathcal{A}$ then $V$ is a uniserial module with descending composition factors
$T_0,T_0,T_1$, or $T_1,T_0,T_0$.
\item[(ii)] If $Q=2\mathcal{B}$ and $B$ is Morita equivalent to neither  
$\SD(2\mathcal{B})_4(c)$ nor $\Q(2\mathcal{B})_2(p,a,c)$ then $V$ is isomorphic to $T_1$. If
$B$ is Morita equivalent to $\SD(2\mathcal{B})_4(c)$ or $\Q(2\mathcal{B})_2(p,a,c)$ then $V$ is
a uniserial module 
with descending composition factors $T_0,T_1$, or $T_1,T_0$.
\item[(iii)] If $Q=3\mathcal{A}$ then $V$ is a uniserial module 
with descending composition factors
$T_0,T_1,T_0,T_2$, or $T_2,T_0,T_1,T_0$, or $T_0,T_2,T_0,T_1$, or $T_1,T_0,T_2,T_0$.
\item[(iv)] If $Q\in\{3\mathcal{B},3\mathcal{D}\}$ then $V$ is isomorphic to $T_1$.
\item[(v)] If $B$ is Morita equivalent to $\SD(3\mathcal{C})_{2,1}$ then
$V$ is isomorphic to $T_0$. If $B$ Morita equivalent to $\SD(3\mathcal{C})_{2,2}$ then
$V$ is indecomposable with descending radical factors $T_0,T_1\oplus T_2$, or 
$T_1\oplus T_2,T_0$,
or $V$ is uniserial with descending composition factors $T_1,T_0,T_2$, or $T_2,T_0,T_1$.
\item[(vi)] If $Q\in\{3\mathcal{H},3\mathcal{K}\}$ and $B$ is not Morita equivalent to
$\SD(3\mathcal{H})_2$ then $V$ is a uniserial module with 
descending composition factors $T_1,T_2$, or $T_2,T_1$.
If $B$ is Morita equivalent to $\SD(3\mathcal{H})_2$ then $V$ is a uniserial module with 
descending composition factors $T_0,T_1$, or $T_1,T_0$.
\end{enumerate}
\end{enumerate}
Conversely, if $V$ is as in $(a)$ or $(b)$, then $\mathrm{End}_{kG}(V)\cong k$ and
the Brauer character of $V$ is equal to the restriction to the $2$-regular conjugacy classes 
of an ordinary irreducible character of height $1$.
Moreover, $\mathrm{Ext}^1_{kG}(V,V)=0$ if  $D$ is quaternion of order $8$,
and $\mathrm{Ext}^1_{kG}(V,V)\cong k$ in all other cases.
\end{lemma}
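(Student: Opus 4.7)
The plan is to proceed by Morita equivalence: since $B$ is Morita equivalent to a basic algebra $\Lambda = kQ/I$ from the list in Section~\ref{s:tame}, and since all relevant invariants ($\mathrm{End}$, stable endomorphisms, $\mathrm{Ext}^1$, composition-factor multiplicities) transport across the equivalence, it suffices to classify the indecomposable $\Lambda$-modules $V$ whose composition factor multiplicities match the restriction of a height-$1$ ordinary character $\chi$ to the $2$-regular classes. These multiplicities can be read off directly from the rows of the decomposition matrices in Figures \ref{fig:decompD(2A)}--\ref{fig:decompQ(3K)}: the multiplicity of $T_j$ in $V$ equals the decomposition number $d_{\chi,\varphi_j}$, where $\varphi_j$ is the Brauer character of $T_j$.

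For each of the quivers $Q$ appearing in the 24 families of basic algebras, the list of multiplicities thus produced is very short (most entries are $0$ or $1$). I would then enumerate the indecomposable $\Lambda$-modules realizing each such multiplicity vector, using the explicit relations $I$: in almost every case the Loewy structure is forced to be uniserial, with the specific ascending/descending sequence of composition factors dictated by which arrows in $Q$ survive modulo the socle relations of $I$. For the few non-uniserial possibilities (notably for $\SD(3\mathcal{C})_{2,2}$ and for $\SD(2\mathcal{B})_4(c)$, $\Q(2\mathcal{B})_2(p,a,c)$), one uses the extra arrows or loops in $Q$ to exhibit the additional biserial or length-$2$ indecomposables, and checks directly from $I$ that no other configurations occur.

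Having produced the candidate list, I would verify for each $V$ the three required properties. First, $\mathrm{End}_{kG}(V) \cong k$ follows because $V$ is either a simple $T_j$ (handled directly) or uniserial/biserial with distinct composition factors arranged along a unique cycle of arrows, so every endomorphism is determined by its action on the head; one then checks that no non-scalar endomorphism lifts. Second, none of the modules on the list is projective (the projectives are easily identified from the Cartan matrix $C = D^{\mathrm{tr}}D$), so $\underline{\mathrm{End}}_{kG}(V) = \mathrm{End}_{kG}(V) \cong k$. Third, $\mathrm{Ext}^1_{kG}(V,V)$ is computed from the start of a minimal projective resolution $P_1 \to P_0 \to V \to 0$: one reads $P_0$ off from the head of $V$, computes $\Omega V$ as the kernel, and then $\dim_k \mathrm{Ext}^1(V,V)$ is the dimension of $\mathrm{Hom}_\Lambda(\Omega V, V)$ modulo the image of maps factoring through $P_0$. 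When $D$ is quaternion of order $8$, the algebras are self-injective of finite representation type with $\Omega$-period dividing $4$, and the modules in the list lie on $\Omega$-orbits that realize $\mathrm{Ext}^1(V,V) = 0$; in all other cases the tame structure and the specific uniserial form produce exactly one non-split self-extension, giving $\mathrm{Ext}^1(V,V) \cong k$.

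The main obstacle is the sheer bookkeeping: the 24 Morita classes split into many sub-cases depending on whether $D$ is dihedral, semidihedral, or quaternion, and several of the semidihedral and quaternion algebras ($\SD(2\mathcal{B})_4(c)$, $\Q(2\mathcal{B})_2(p,a,c)$, $\SD(3\mathcal{C})_{2,1}$ vs.\ $\SD(3\mathcal{C})_{2,2}$, $\SD(3\mathcal{H})_1$ vs.\ $\SD(3\mathcal{H})_2$) differ from their neighbors only by subtle changes in $I$ that nonetheless alter which composition-factor sequences can appear in an indecomposable. A second subtlety is the quaternion-of-order-$8$ case, where the three height-$1$ characters $\chi_{5,1}, \chi_6, \chi_7$ do \emph{not} all yield the same Brauer character, so each must be handled separately; a consultation of the decomposition matrices in Figures \ref{fig:decompQ(3A)}, \ref{fig:decompQ(3B)}, \ref{fig:decompQ(3K)} is needed to match each character to its candidate module. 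Once the case distinctions are properly tabulated, each individual verification is a short computation with the relations in $I$.
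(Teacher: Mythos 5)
Your overall strategy --- transport everything to the basic algebra $\Lambda=kQ/I$, read off the composition-factor multiplicities of $V$ from the relevant row of the decomposition matrix, enumerate the indecomposable $\Lambda$-modules realizing that multiplicity vector via the quiver and relations, and then verify $\mathrm{End}$, $\underline{\mathrm{End}}$ and $\mathrm{Ext}^1$ for the survivors --- is exactly the approach the paper takes. However, one step of your plan fails as written: you assert that the multiplicity vector alone forces the Loewy structure to be uniserial ``in almost every case.'' It does not. These blocks have infinite representation type, and already for a single dimension vector there are several indecomposables of different Loewy shapes. In the paper's worked case $\Lambda=\SD(3\mathcal{A})_1$ with multiplicity vector $(2,1,1)$, besides the four uniserial modules of (b)(iii) there are genuine indecomposable submodules and quotient modules of $P_{T_0}$ with descending radical factors $T_0$, $T_1\oplus T_2$, $T_0$ (and further candidates with two-dimensional radical quotient); these are eliminated \emph{only} because their stable endomorphism rings are $2$-dimensional, not because they fail to exist. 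So the hypothesis $\underline{\mathrm{End}}_{kG}(V)\cong k$ must be used actively as a filter during the enumeration --- for each candidate one must decide which non-scalar endomorphisms factor through a projective --- whereas your proposal invokes the stable endomorphism ring only passively, as a Morita-invariant and in the converse direction. Without this filter the forward classification yields a strictly larger list than the lemma claims.

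A smaller slip: in the quaternion-of-order-$8$ case you call the algebras ``self-injective of finite representation type.'' They are self-injective (all blocks are) and the relevant modules are $\Omega$-periodic of period dividing $4$, but by Hypothesis \ref{hyp:alltheway} the block has \emph{infinite} tame representation type, so finiteness is false. This does not derail the argument, since $\mathrm{Ext}^1_{kG}(V,V)=0$ in that case can and should be checked directly from the projective presentations, exactly as you propose for the other cases. Once the stable-endomorphism filter is incorporated into the enumeration, the remainder of your plan coincides with the paper's proof, which likewise works out only one representative Morita class in detail and leaves the remaining bookkeeping over the other families to the same routine.
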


\begin{proof}
Lemma \ref{lem:biglist} is proved using the description of the basic algebra $\Lambda=kQ/I$ of the
block $B$ together with its decomposition matrix as provided in Section \ref{s:tame}.
To give an idea of the arguments, we discuss the case when $\Lambda=\SD(3\mathcal{A})_1$
in part (b)(iii).
It follows from the decomposition matrix in Figure \ref{fig:decompSD(3A)} that $V$ is an
indecomposable $B$-module with composition factors $T_0,T_0,T_1,T_2$.
Let $t_V$ (resp. $s_V$) be the composition series length of the radical quotient (resp. socle) of $V$.
If $t_V=3$ then $V$ has radical series length 2 and $s_V=1$. But then $V$ is a submodule of a projective 
indecomposable $B$-module which is impossible for $t_V=3$. If $t_V=2$ then $V$ has radical series length 
at most 3 and $s_V\le 2$. Using that $\mathrm{Ext}^1_B(T_i,T_j)$ is one-dimensional over $k$ 
if $(i,j)\in\{(0,1),(0,2),(1,0),(2,0)\}$ and zero otherwise, it follows that there are no indecomposable
$B$-modules $V$ with $t_V=2$ and radical series length 2. Analyzing $\mathrm{Ext}^1_B(T_{i_1}\oplus
T_{i_2}, V')$ for appropriate indecomposable $B$-modules $V'$ of length 2, we see that the only indecomposable 
$B$-modules $V$ with $t_V=2$ and radical series length 3 are submodules of the projective cover
$P_{T_0}$ 
of the form
$$\begin{array}{c@{}cc}T_0\\&T_1&T_2\\&\multicolumn{2}{c}{T_0}\end{array}
\qquad\mbox{ or }\qquad \begin{array}{cc@{}c}&&T_0\\T_1&T_2\\\multicolumn{2}{c}{T_0}\end{array}\;.$$
However, the stable endomorphism ring of each of these modules has $k$-dimension 2.
If $t_V=1$ then $V$ has radical series length at most 4 and $s_V\le 3$. Since $V$ is a quotient module
of a projective indecomposable $B$-module in this case, there are no indecomposable $B$-modules
$V$ with $t_V=1$ and radical series length 2. If $t_V=1$ and the radical series length of $V$ is 3, we
use similar $\mathrm{Ext}^1$ arguments as above to see that the only indecomposable $B$-modules
$V$ with these properties are quotient modules of the projective cover $P_{T_0}$ 
with descending radical factors $T_0, T_1\oplus T_2,T_0$.
But all such modules have an endomorphism which factors through $T_0$ and which does not factor
through a projective module, meaning that
the stable endomorphism ring of each of these modules has 
$k$-dimension 2. Finally, if $t_V=1$ and the radical series length of $V$ is 4, then $V$ is one of
the uniserial modules described in part (b)(iii) of Lemma \ref{lem:biglist}. This description shows directly that 
$\mathrm{End}_B(V)\cong k$. Moreover, using the projective indecomposable 
$B$-modules, we see that $\mathrm{Ext}^1_B(V,V)\cong k$.
\end{proof}

\begin{cor}
\label{cor:biglist}
Assume Hypothesis $\ref{hyp:alltheway}$. 
Let $V$ be an indecomposable $kG$-module 
belonging to $B$ such that $\underline{\mathrm{End}}_{kG}(V)\cong k$.
The following statements are equivalent:
\begin{enumerate}
\item[(i)] $V$ is maximally ordinary in the sense of Definition $\ref{def:maximallyordinary}$;
\item[(ii)] $n\ge 4$ and the Brauer character of $V$ is equal to the restriction to the $2$-regular 
conjugacy classes of an ordinary irreducible character of $G$ of height $1$;
\item[(iii)] $n\ge 4$ and $V$ is as in part $(b)$ of Lemma $\ref{lem:biglist}$.
\end{enumerate}
\end{cor}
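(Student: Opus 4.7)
The plan is to establish $(ii) \Leftrightarrow (iii)$ directly from Lemma \ref{lem:biglist}, and then prove $(i) \Leftrightarrow (ii)$ by examining the generalized decomposition numbers at elements of maximal $2$-power order in $D$.

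The equivalence $(ii) \Leftrightarrow (iii)$ is essentially immediate: the assumption $n\ge 4$ rules out $D$ being quaternion of order $8$, so Lemma \ref{lem:biglist}(b) applies in both directions. The forward implication is the classification of that lemma, and the reverse is the converse statement at the end of the lemma.

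For $(ii) \Rightarrow (i)$, I would choose $\chi := \chi_{5,1}$, which lifts the Brauer character of $V$ because all height-$1$ ordinary characters of $B$ restrict to the same irreducible Brauer character on $2$-regular classes. For any $\sigma \in D$ of maximal order $2^{n-1}$, formula (\ref{eq:great0}) together with (\ref{eq:nu}) gives
$$d^{(\sigma)}_{\chi_{5,1},\,\varphi^{(\sigma)}} \;=\; \nu_{n-1},$$
which equals $\zeta+\zeta^{-1}$ in the dihedral and generalized quaternion cases and $\zeta-\zeta^{-1}$ in the semidihedral case. Since $n\ge 4$, the root of unity $\zeta$ has order at least $8$, so $\nu_{n-1}$ is an algebraic integer whose absolute value is $2\cos(\pi/2^{n-2})$ (or $2\sin(\pi/2^{n-2})$ in the semidihedral case). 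This is irrational and in particular not in $\{0,\pm 1\}$, so $V$ is maximally ordinary.

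For $(i) \Rightarrow (ii)$ I would argue by contraposition: assuming either $n\le 3$ or that the Brauer character of $V$ is not the restriction of a height-$1$ character, I would show that $V$ cannot be maximally ordinary. By Remark \ref{rem:cases}, any ordinary $\chi$ lifting the Brauer character of $V$ has height $0$, $1$, or $n-2$. For $n=2$ only the four height-$0$ characters occur and $\sigma$ ranges over the involutions of $D$, so the $d^{(\sigma)}_{\chi,\varphi}$ reduce to rational integers that one checks to lie in $\{0,\pm 1\}$. For $n=3$ the remark following (\ref{eq:great0}) yields $\zeta+\zeta^{-1}=0$ for all height-$1$ (and, in the quaternion case, height-$(n-2)$) characters at $\sigma$ of order $4$. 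The remaining cases — namely $\chi$ of height $0$ or $n-2$ with $n\ge 4$ — are handled by a case-by-case verification, based on the explicit description of these characters in \cite{brauer2,olsson} and on the subsection formula of Remark \ref{rem:fusion}: such characters are induced or inflated from representations of proper subgroups whose generalized decomposition numbers at elements of maximal $D$-order remain in $\{0,\pm 1\}$.

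The main obstacle is this last case analysis. Verifying uniformly that generalized decomposition numbers of height-$0$ and height-$(n-2)$ characters at $\sigma\in D$ of order $2^{n-1}$ lie in $\{0,\pm 1\}$ requires working through the sign conventions of \cite[Thm.~5]{brauer2} and \cite[Thms.~4.10, 4.11, 4.15]{olsson} across the Morita classes of blocks tabulated in Section \ref{s:tame}. Everything else in the argument is a short consequence of Lemma \ref{lem:biglist} and the explicit formula (\ref{eq:great0}) for height-$1$ characters.
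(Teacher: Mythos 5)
Your proposal is correct and follows essentially the same route as the paper: $(ii)\Leftrightarrow(iii)$ from Lemma \ref{lem:biglist}, and $(i)\Leftrightarrow(ii)$ by noting that for $n\ge 4$ the height-one characters give $d^{(\sigma)}_{\chi_{5,1},\varphi^{(\sigma)}}=\nu_{n-1}\notin\{0,\pm1\}$ via (\ref{eq:great0}) and (\ref{eq:nu}), while for $n\le 3$ and for characters of height $0$ or $n-2$ all relevant generalized decomposition numbers lie in $\{0,\pm1\}$. The case analysis you flag as the main obstacle is exactly what the paper disposes of by citing \cite{brIV,brauer2,olsson} (nonzero generalized decomposition numbers of height-$0$ characters at $\sigma$ are $\pm1$, and those of height-$(n-2)$ characters vanish), so nothing further is needed there.
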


\begin{proof}
If $n=2$ (resp. $n=3$), it follows from \cite[Sect. VII]{brIV} (resp. \cite{brauer2,olsson})
that all generalized decomposition numbers corresponding to maximal $2$-power
order elements in $D$ lie in $\{0,\pm 1\}$. 
Hence there are no maximally ordinary $kG$-modules belonging to $B$ if $n\le 3$.

Now suppose $n\ge 4$. Let $\sigma\in D$ be of maximal $2$-power order, and
let $\chi$ be an ordinary irreducible character belonging to $B$.
It follows from \cite{brauer2,olsson} that if $\chi$ has height 0, then the non-zero
generalized decomposition numbers corresponding to $\sigma$ and $\chi$
are $\pm 1$. Also, if $\chi$ has height $n-2$, then all
generalized decomposition numbers corresponding to $\sigma$ and $\chi$ are zero.
On the other hand, we see from (\ref{eq:great0}), since $n\ge 4$, that if $\chi$ has height 1, then
$d_{\chi,\varphi^{(\sigma)}}^{\,(\sigma)}\not\in\{0,\pm 1\}$ 
if and only if 
$\chi=\chi_{5,i}$ for $i\in\{1,2,\ldots,2^{n-2}-1\}-\{2^{n-3}\}$. Note that the restriction of
$\chi_{5,i}$ to the $2$-regular conjugacy classes is the same for all $1\le i\le 2^{n-2}-1$.
Therefore it follows that if $n\ge 4$ then $V$ is maximally ordinary if and only if its 
Brauer character is equal to the restriction to the $2$-regular conjugacy 
classes of an ordinary irreducible character of height 1. 
Hence Corollary \ref{cor:biglist}  follows from Lemma \ref{lem:biglist}.
\end{proof}

The lifts of some of the $kG$-modules $V$ from Lemma \ref{lem:biglist} are connected to 3-tubes in the 
stable Auslander-Reiten quiver of $B$ as follows.

\begin{dfn}
\label{def:mixedcase}
Assume Hypothesis \ref{hyp:alltheway}, assume $n\ge 3$,
and let $V$ be as in Lemma \ref{lem:biglist}.
We say $V$ \emph{corresponds to a $3$-tube} if  
there exists an indecomposable quotient module $\overline{U}$ of the projective 
$kG$-module cover $P_V$ of $V$ such that 
\begin{enumerate}
\item[(a)] $\overline{U}$ defines a lift of $V$ over $k[t]/(t^{2^{n-2}})$, and
\item[(b)] $\overline{U}$ belongs to a $3$-tube of the stable Auslander-Reiten quiver of $B$.
\end{enumerate}
\end{dfn}

The following lemma determines which modules $V$ from Lemma $\ref{lem:biglist}$ correspond to 3-tubes.

\begin{lemma}
\label{lem:mixedcase}
Assume Hypothesis $\ref{hyp:alltheway}$, assume $n\ge 3$, 
and let $V$ be as in Lemma $\ref{lem:biglist}$. 
If $D$ is dihedral then $V$ corresponds to a $3$-tube, and if $D$
is generalized quaternion then $V$ does not correspond to a $3$-tube.
If $D$ is semidihedral then $V$ corresponds to a $3$-tube if and only if
either 
\begin{itemize}
\item $B$ is Morita equivalent to one of the algebras in 
$$\{\SD(2\mathcal{A})_2(c),\SD(2\mathcal{B})_1(c),\SD(3\mathcal{B})_1,
\SD(3\mathcal{C})_{2,1}\}$$
and $V$ is arbitrary; or
\item $B$ is Morita equivalent to $\SD(3\mathcal{A})_1$ 
and $V$ is  such that its radical quotient or its socle
is isomorphic to $T_1$; or
\item $B$ is Morita equivalent to $\SD(3\mathcal{C})_{2,2}$ 
and $V$ is such that its radical quotient or its socle
is isomorphic to $T_0$; or
\item $B$ is Morita equivalent to $\SD(3\mathcal{H})_1$ and $V$ is such that 
its radical quotient is isomorphic to $T_1$;
\item $B$ is Morita equivalent to $\SD(3\mathcal{H})_2$ and $V$ is such that 
its radical quotient is isomorphic to $T_0$.
\end{itemize}
In all cases, if $V$ corresponds to a $3$-tube, then $\overline{U}$ from Definition $\ref{def:mixedcase}$ 
belongs to the boundary of its $3$-tube.
\end{lemma}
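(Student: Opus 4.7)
The plan is to perform a case analysis on the Morita equivalence class of the basic algebra $\Lambda=kQ/I$ of $B$, as enumerated in Section \ref{s:tame}, combining Erdmann's classification in \cite{erd} of the components of the stable Auslander-Reiten quiver of $\Lambda$ with an explicit construction of $\overline{U}$ from the relations defining $\Lambda$. The necessary input from \cite{erd} is that a $3$-tube occurs in the stable AR quiver of $\Lambda$ never when $\Lambda$ is of generalized quaternion type (every component is a tube of rank $1$, $2$, or $4$, or of Euclidean type), exactly twice when $\Lambda$ is of dihedral type, and for $\Lambda$ of semidihedral type only when the ideal $I$ contains a ``long loop'' relation of a specific form.

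The generalized quaternion case is therefore immediate: since no component of the stable AR quiver is a $3$-tube, condition (b) of Definition \ref{def:mixedcase} cannot be satisfied, and no $V$ corresponds to a $3$-tube.

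For the dihedral case I would produce $\overline{U}$ directly inside an appropriate projective indecomposable $P_{T_j}$. In each of the dihedral Morita classes the defining ideal contains a relation of the form $(\gamma\beta\alpha)^{2^{n-2}} = (\alpha\gamma\beta)^{2^{n-2}}$ or its analogue; this forces a uniserial subquotient of the projective cover $P_V$ whose $k$-dimension equals $2^{n-2}\dim_k V$ and on which one of the distinguished loops of $\Lambda$ acts as an endomorphism $t$ satisfying $t^{2^{n-2}} = 0$ but $t^{2^{n-2}-1} \neq 0$, giving a lift of $V$ over $k[t]/(t^{2^{n-2}})$ and verifying (a). For (b), Erdmann's description of the two $3$-tubes shows that their boundary modules are precisely these uniserial quotients, and the dimension of $\overline{U}$ places it at a boundary vertex. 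For the semidihedral case the same construction applies whenever the defining ideal provides a long loop of length $2^{n-2}$ attached to the required simple; inspecting the ideals of Section \ref{s:tame} shows that this is exactly the situation for the Morita classes listed in the lemma. For $\SD(3\mathcal{A})_1$, $\SD(3\mathcal{C})_{2,2}$, $\SD(3\mathcal{H})_1$, and $\SD(3\mathcal{H})_2$ the long loop is attached to only one of the two relevant simples, forcing the asymmetric restrictions on the radical quotient or socle of $V$ that the lemma asserts; for the remaining semidihedral Morita classes the defining relations kill the loop before the $2^{n-2}$-th power, so no lift $\overline{U}$ exists. The boundary assertion holds in every case because $\Omega\overline{U}$ has strictly smaller $k$-dimension than $\overline{U}$ by the explicit construction.

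The hard part will be the bookkeeping in the semidihedral case: translating each ideal of Section \ref{s:tame} into the Loewy structure of the appropriate projective indecomposable, locating the first nonzero vanishing power of a loop, and reading off on which simple the $3$-tube sits requires an individual argument for each of the ten or so semidihedral Morita classes, together with a verification that the $\Omega$-period of the constructed $\overline{U}$ is indeed $3$ rather than $1$. By contrast, once Erdmann's AR-quiver classification is invoked, the dihedral and quaternion cases are formal.
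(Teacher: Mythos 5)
Your overall plan --- dispose of the generalized quaternion case by the absence of $3$-tubes in the stable Auslander--Reiten quiver (\cite[V.4.3]{erd}), and otherwise construct $\overline{U}$ explicitly from the quiver relations and test conditions (a) and (b) of Definition \ref{def:mixedcase} algebra by algebra --- matches the paper's, and your ``long loop'' criterion for when a lift of $V$ over $k[t]/(t^{2^{n-2}})$ exists in the semidihedral case is in substance the paper's computation of $R(G,V)/2R(G,V)$ via \cite[Lemma 2.5]{3quat}. (One technical difference: for the dihedral case the paper does not read the boundary modules of the two $3$-tubes off from \cite{erd} directly; it uses that $\Lambda/\mathrm{soc}(\Lambda)$ is special biserial by \cite[VI.10.1]{erd} and locates the $3$-tubes and their boundaries by the string combinatorics of \cite[Sect.~3]{buri}, which is the tool that makes the ``direct inspection'' rigorous.)

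The step that would fail is your justification of the final boundary assertion: the inequality $\dim_k\Omega\overline{U}<\dim_k\overline{U}$ does not imply that $\overline{U}$ lies at the boundary of its tube. Since $B$ is self-injective, $\Omega$ is an auto-equivalence of the stable module category and hence maps the stable AR component of $\overline{U}$ isomorphically onto a component, preserving quasi-length; meanwhile $\dim_k\Omega\overline{U}=\dim_k P_{\overline{U}}-\dim_k\overline{U}$ carries no information about the position of $\overline{U}$ within its tube. The criterion the paper actually uses is that $\overline{U}$ is at the boundary precisely when its almost split sequence has an indecomposable middle term, verified via \cite[Sect.~1]{buri}. Relatedly, in the semidihedral case existence of the lift $\overline{U}$ only establishes condition (a) of Definition \ref{def:mixedcase}; the content of condition (b) is the verification that $\Omega^3(\overline{U})\cong\overline{U}$, carried out from the explicit identification of $\overline{U}$ as $\Omega^{\pm1}$ of a simple or short uniserial module with uniserial projective cover. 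You flag this $\Omega$-period check as bookkeeping, but it is the decisive step: without it the ``long loop'' inspection shows only that the lift exists, not that it lies in a $3$-tube rather than in some other component.
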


\begin{proof}
If $D$ is generalized quaternion, Lemma \ref{lem:mixedcase} follows from 
\cite[V.4.3]{erd}.

Suppose next that $D$ is dihedral, and let $\Lambda=kQ/I$ be a basic algebra such that $B$ is Morita 
equivalent to $\Lambda$. By \cite[VI.10.1]{erd}, it follows that $\Lambda/\mathrm{soc}(\Lambda)$ is
special biserial. Therefore, we can use the techniques described in \cite[Sect. 3]{buri} to determine
the 3-tubes of the stable Auslander-Reiten quiver of $B$. Note that the modules at the boundaries
of the 3-tubes are either maximal uniserial or simple. Using the list of the possible $V$ in Lemma
\ref{lem:biglist}, we see by direct inspection that $V$ always corresponds to a 3-tube and
that the module $\overline{U}$ from Definition $\ref{def:mixedcase}$ 
always belongs to the boundary of its $3$-tube.

Finally, suppose that $D$ is semidihedral. By \cite[V.4.2]{erd}, the stable Auslander-Reiten quiver
of $B$ has at most one 3-tube. 
To give an idea of the arguments, we discuss the case when $B$ is Morita equivalent to 
$\Lambda=\SD(3\mathcal{A})_1$.
By Lemma \ref{lem:biglist}(b)(iii), $V$ is a uniserial module with descending composition factors
$T_0,T_1,T_0,T_2$, or $T_2,T_0,T_1,T_0$, or $T_0,T_2,T_0,T_1$, or $T_1,T_0,T_2,T_0$.
Since $V$ is uniserial with $\mathrm{End}_{kG}(V)\cong k\cong\mathrm{Ext}^1_{kG}(V,V)$, 
we can use \cite[Lemma 2.5]{3quat} to show that $R(G,V)/2R(G,V)$ is isomorphic to $k[t]/(t^{2^{n-2}})$
(resp. $k[t]/(t^{2^{n-2}-1})$) if the radical quotient or the socle of $V$ is isomorphic to $T_1$
(resp. $T_2$). Hence by Definition \ref{def:mixedcase}, $V$ can only correspond to a 3-tube
if the radical quotient or the socle of $V$ is isomorphic to $T_1$. Since the projective cover
$P_{T_1}$ is uniserial, we obtain from \cite[Lemma 2.5]{3quat} that $\overline{U}=\Omega^{-1}(T_1)$
(resp. $\overline{U}=\Omega(T_1)$) defines a lift of $V$ over $k[t]/(t^{2^{n-2}})$
if the radical quotient (resp. the socle) of $V$ is isomorphic to $T_1$. Since
$\Omega^3(\overline{U})\cong \overline{U}$, it follows that $\overline{U}$ belongs to a
3-tube of the stable Auslander-Reiten quiver of $B$. Moreover, using the criterion given in 
\cite[Sect. 1]{buri} for almost split sequences to have an indecomposable middle term, we see
that $\overline{U}$ belongs to the boundary of this 3-tube.
\end{proof}

The following result shows that if  $V$ corresponds to a 3-tube, then
the module $\overline{U}$ from Definition \ref{def:mixedcase} has a universal deformation
ring $R(G,\overline{U})\cong k$.

\begin{prop}
\label{prop:3tubes}
Assume Hypothesis $\ref{hyp:alltheway}$, assume $n\ge 3$,
and let $V$ be as in Lemma $\ref{lem:biglist}$. 
Moreover suppose that $V$ corresponds to a $3$-tube.
Let $\overline{U}$ be the $kG$-module from Definition $\ref{def:mixedcase}$ which
belongs to a $3$-tube of the stable Auslander-Reiten quiver of $B$. Then 
$\underline{\mathrm{End}}_{kG}(\overline{U})\cong k$ and $R(G,\overline{U})\cong k$.
\end{prop}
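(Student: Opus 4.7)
The plan is to reduce the statement to computations of stable endomorphisms and of $\mathrm{Ext}^1$ for $\overline{U}$. By Lemma~\ref{lem:mixedcase}, $\overline{U}$ lies on the boundary of a $3$-tube, so $\Omega^3\overline{U}\cong\overline{U}$ and the three modules $\overline{U},\Omega\overline{U},\Omega^2\overline{U}$ are pairwise non-isomorphic. Since $kG$ is a symmetric algebra, the Auslander--Reiten translate satisfies $\tau\cong\Omega^2$ on the stable category $\underline{\mathrm{mod}}\,kG$, so $\tau\overline{U}\cong\Omega^2\overline{U}$, and the almost split sequence ending at $\overline{U}$ has indecomposable middle term.

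The first step is to verify $\underline{\mathrm{End}}_{kG}(\overline{U})\cong k$ in each of the cases enumerated in Lemma~\ref{lem:mixedcase}. In most of these cases one has $\overline{U}\cong\Omega^{\pm1}(T)$ for a simple non-projective $B$-module $T$, as is already visible from the discussion of $\SD(3\mathcal{A})_1$ in the proof of Lemma~\ref{lem:mixedcase}; here the claim follows at once because $\Omega$ is an auto-equivalence of the stable module category and $\underline{\mathrm{End}}_{kG}(T)\cong k$. In the remaining cases a direct inspection of the Loewy structure of $\overline{U}$ as a quotient of $P_V$, using the presentations of the basic algebra of $B$ from Section~\ref{s:tame} and quotienting out morphisms factoring through projectives, yields the same answer. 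By Proposition~\ref{prop:stablendudr}, $\overline{U}$ then admits a universal deformation ring $R(G,\overline{U})$; and since weak and strong deformations coincide under the hypothesis $\underline{\mathrm{End}}_{kG}(\overline{U})\cong k$ (see \cite[Remark 2.1]{3quat}), the tangent space of $R(G,\overline{U})$ is naturally identified with $\mathrm{Ext}^1_{kG}(\overline{U},\overline{U})$.

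To conclude $R(G,\overline{U})\cong k$ it therefore suffices to show $\mathrm{Ext}^1_{kG}(\overline{U},\overline{U})=0$. Using $\mathrm{Ext}^1_{kG}(X,Y)\cong\underline{\mathrm{Hom}}_{kG}(X,\Omega^{-1}Y)$ in the stable category of $kG$, together with $\Omega^{-1}\overline{U}\cong\Omega^{2}\overline{U}\cong\tau\overline{U}$, we get
$$\mathrm{Ext}^1_{kG}(\overline{U},\overline{U})\;\cong\;\underline{\mathrm{Hom}}_{kG}(\overline{U},\tau\overline{U}).$$
Since $\overline{U}$ and $\tau\overline{U}$ are distinct boundary modules of a tube of $\tau$-rank $3$, I would argue by the mesh relations in the AR-quiver (following \cite{buri}) that every stable morphism $\overline{U}\to\tau\overline{U}$ must factor through the indecomposable middle term $E$ of the almost split sequence ending at $\tau\overline{U}$ and is thereby forced to be zero in the stable category. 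Alternatively, the vanishing can be checked by hand in each case from Lemma~\ref{lem:mixedcase} using the explicit radical series of $\overline{U}$ and $\tau\overline{U}$, read off from the projective $kG$-modules in the block.

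The main obstacle will be this last Hom-vanishing in the cases where the Ext quiver of $B$ carries a loop at the vertex corresponding to the simple constituent of $\overline{U}$ relevant to the argument---for instance $\SD(3\mathcal{C})_{2,1}$ and $\SD(3\mathcal{C})_{2,2}$, where the quiver $3\mathcal{C}$ has the loop $\rho$ at vertex~$0$. In those cases the clean reduction $\underline{\mathrm{Hom}}_{kG}(\overline{U},\tau\overline{U})\cong\mathrm{Ext}^1_{kG}(T,T)$ via $\Omega$-shifts of a simple module is unavailable, and one must exploit the precise form of the ideal defining the basic algebra of $B$ from Section~\ref{s:tame} to rule out any nonzero stable homomorphism directly.
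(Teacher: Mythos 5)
Your reduction breaks down at the sentence ``To conclude $R(G,\overline{U})\cong k$ it therefore suffices to show $\mathrm{Ext}^1_{kG}(\overline{U},\overline{U})=0$.'' That is not sufficient. Vanishing of the tangent space $\mathrm{Ext}^1_{kG}(\overline{U},\overline{U})$ only forces $\mathfrak{m}_{R}=2R$ for $R=R(G,\overline{U})$, i.e.\ it shows that $R(G,\overline{U})$ is a quotient of $W$; it leaves open all the possibilities $k,\,W/4W,\,W/8W,\dots,W$. The actual content of the proposition is the mixed-characteristic obstruction statement that $\overline{U}$ does not even lift over $W/4W$, and nothing in your argument addresses this. (This is exactly the point that makes the proposition useful in the proof of Theorem \ref{thm:main}: there it is used to force $a=1$ and $m=1$ in $R(G,V)\cong W[[t]]/(q_n(t)(t-2\mu),a2^mq_n(t))$, i.e.\ to rule out lifts of $\overline{U}$ over $W$ or over $W/2^mW$ with $m\ge 2$.) Your computations of $\underline{\mathrm{End}}_{kG}(\overline{U})\cong k$ and of $\mathrm{Ext}^1_{kG}(\overline{U},\overline{U})=0$ are consistent with what the paper does (it verifies both case by case from Lemma \ref{lem:mixedcase}), but they are only the first half of the proof.

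The missing ingredient is supplied in the paper by local methods. For $D$ dihedral the result is quoted from \cite[Sect.~5.2]{3sim} and \cite[Prop.~6.3]{2sim} (the generalized quaternion case does not occur, since then $V$ never corresponds to a $3$-tube). For $D$ semidihedral one takes a vertex $K$ of $\overline{U}$: by \cite[Chapter V]{erd} (in particular V.4.2.1 and the proof of V.4.2), $K$ is a Klein four group with $N_G(K)/C_G(K)\cong S_3$, and the Green correspondent $f\overline{U}$ lies on the boundary of a $3$-tube of a block $b$ of $kN_G(K)$ with $b^G=B$ which is Morita equivalent to $kS_4$ modulo the socle. One then runs the arguments of \cite[Prop.~5.2.4 and Cor.~5.2.5]{3sim}, which transfer the non-liftability of the corresponding $kS_4$-module back to $\overline{U}$ via Green correspondence. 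To repair your proof you would need to add an argument of this kind (or some other characteristic-zero obstruction computation) showing that every lift of $\overline{U}$ over $W/4W$ leads to a contradiction.
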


\begin{proof}
If $D$ is dihedral, this follows from \cite[Sect. 5.2]{3sim} and \cite[Prop. 6.3]{2sim}.
For the remainder of the proof, assume that $D$ is semidihedral.

Let $\mathfrak{T}$ be the 3-tube of the stable Auslander-Reiten quiver of $B$ to which 
$\overline{U}$ belongs. By Lemma \ref{lem:mixedcase}, 
$\overline{U}$ belongs to the boundary of $\mathfrak{T}$. Going through the cases
described in Lemma \ref{lem:mixedcase}, it is straightforward to show that 
$\underline{\mathrm{End}}_{kG}(\overline{U})\cong k$ and $\mathrm{Ext}^1_{kG}(
\overline{U},\overline{U})=0$.

Let $K$ be a vertex of $\overline{U}$. Because of the work in  \cite[Chapter V]{erd},
and in particular \cite[V.4.2.1 and proof of V.4.2]{erd}, we 
have the following facts:
\begin{enumerate}
\item[(i)] The group $K$ is a Klein four group and the quotient group $N_G(K )/C_G(K )$ is 
isomorphic to a symmetric group $S_3$ . 
\item[(ii)] There is a block $b$ of $kN_G(K )$ with $b^G = B$ such that the Green correspondent 
$f \overline{U}$ of $\overline{U}$
belongs to the boundary of a 3-tube in the stable Auslander-Reiten quiver of $b$. Moreover, 
$b$ is Morita equivalent to $kS_4$ modulo the socle. 
\end{enumerate}
Using these facts in lieu of \cite[Facts 5.2.1]{3sim}, we can use similar arguments as in 
the proofs of \cite[Prop. 5.2.4 and Cor. 5.2.5]{3sim} 
to show that $R(G, \overline{U}) \cong k$. 
\end{proof}

By Corollary \ref{cor:biglist}, Theorem \ref{thm:supermainNEW} is a consequence of the following
result.

\begin{thm}
\label{thm:main}
Assume Hypothesis $\ref{hyp:alltheway}$, and assume $n\ge 3$.
Let $V$ be an indecomposable $kG$-module 
belonging to $B$ such that $\underline{\mathrm{End}}_{kG}(V)\cong k$ and such that
the Brauer character of $V$ is equal to the restriction to the $2$-regular conjugacy classes 
of an ordinary irreducible character of $G$ of height $1$.
Then
$$R(G,V)\cong \left\{\begin{array}{cl} 
W[[t]]/(t\,q_n(t),2\,q_n(t))&\mbox{ if $V$ corresponds to a $3$-tube,}\\
W[[t]]/(q_n(t))&\mbox{ if $V$ does not correspond to a $3$-tube,}\end{array}\right.$$
where $q_n(t)$ is as in Definition $\ref{def:seemtoneed}$.
In all cases, the ring $R(G,V)$ is a subquotient ring of $WD$, and it is a complete intersection 
ring if and only if $V$ does not correspond to a $3$-tube.
\end{thm}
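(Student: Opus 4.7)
The plan is organized in three stages, using Lemma \ref{lem:biglist} to control $V$ and Lemmas \ref{lem:quaternionargument} and \ref{lem:semidihedraltrick} to produce the relevant lifts.

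\emph{Stage 1 (mod-$2$ reduction).} Since $\mathrm{Ext}^1_{kG}(V,V)\cong k$ by Lemma \ref{lem:biglist}, the mod-$2$ tangent space of $R(G,V)$ is $1$-dimensional, so $R(G,V)$ is a quotient of $W[[t]]$. Running through the Morita-equivalence classes of Section \ref{s:tame} together with the list of $V$ in Lemma \ref{lem:biglist}, I would adapt the Auslander--Reiten and uniserial-module techniques of \cite{3sim,3quat,2sim} to prove that
\begin{equation*}
R(G,V)/2R(G,V)\;\cong\;
\begin{cases}
k[[t]]/(t^{2^{n-2}})&\text{if $V$ corresponds to a $3$-tube,}\\
k[[t]]/(t^{2^{n-2}-1})&\text{otherwise.}
\end{cases}
\end{equation*}
In the $3$-tube case, the extra degree of freedom is witnessed by the module $\overline{U}$ of Definition \ref{def:mixedcase}, and Proposition \ref{prop:3tubes} together with the combinatorics of the $3$-tube confirms that no further mod-$2$ deformations exist.

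\emph{Stage 2 (lift to $R'$).} With Stage 1 in hand, the hypothesis of Lemma \ref{lem:quaternionargument}(ii) is verified for the $WG$-lattice $U'$ with the prescribed character, so one obtains $\alpha\in\mathrm{End}_{WG}(U')$ with $W[\alpha]\cong R'$ and $U'$ free of rank $\deg(\chi_{5,1})$ over $W[\alpha]$. Letting $t$ act on $U'$ as $\alpha$ makes $U'$ into an $R'G$-module which is free over $R'$. The reduction $k\otimes_{R'}U'$ has the same Brauer character and $k$-dimension as $V$, and by the uniqueness clauses in Lemma \ref{lem:biglist} it must be isomorphic to $V$. Thus $U'$ is a lift of $V$ over $R'$, yielding a morphism $\phi:R(G,V)\to R'$; surjectivity of $\phi$ follows from the fact that the mod-$2$ image of $\alpha$ generates $\mathrm{End}_{kG}(U'/2U')\cong R'/2R'$.

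\emph{Stage 3 (identify $R(G,V)$).} In case (i), $\phi\bmod 2$ is an isomorphism $R(G,V)/2R(G,V)\stackrel{\sim}{\to}R'/2R'$ between two copies of $k[[t]]/(t^{2^{n-2}-1})$; since $R'$ is $W$-flat and $R(G,V)$ surjects onto it, matching $W$-ranks via Nakayama promotes this to $R(G,V)\cong R'=W[[t]]/(q_n(t))$. In case (ii), the lifts $U'$ over $R'$ and $\overline{U}$ over $k[[t]]/(t^{2^{n-2}})$ have isomorphic reductions over the common quotient $R'/2R'\cong k[[t]]/(t^{2^{n-2}-1})$, so they glue to a lift over the pullback
\begin{equation*}
R'\times_{k[[t]]/(t^{2^{n-2}-1})}k[[t]]/(t^{2^{n-2}})\;\cong\;W[[t]]/(t\,q_n(t),\,2\,q_n(t)),
\end{equation*}
where the identification on the right is a direct generators-and-relations computation using $q_n(t)\equiv t^{2^{n-2}-1}\pmod{2}$ (from Remark \ref{rem:ohyeah}). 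The induced morphism $R(G,V)\to W[[t]]/(t\,q_n(t),2\,q_n(t))$ is surjective and reduces mod $2$ to an isomorphism of copies of $k[[t]]/(t^{2^{n-2}})$, so it is an isomorphism. The subquotient-of-$WD$ statement is then Lemma \ref{lem:semidihedraltrick}, and the complete-intersection dichotomy is ring-theoretic: $R'$ is a hypersurface in the $2$-dimensional regular local ring $W[[t]]$ and hence a complete intersection, while in case (ii) the relations $t\,q_n$ and $2\,q_n$ share the factor $q_n$ and cannot form a regular sequence.

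The main obstacle I expect is the gluing step in case (ii): both the ring-theoretic identification of the pullback with $W[[t]]/(t\,q_n(t),2\,q_n(t))$ and the compatibility of the chosen isomorphisms $k\otimes_{R'}U'\cong V$ and $\overline{U}/t\overline{U}\cong V$ under further reduction to $k[[t]]/(t^{2^{n-2}-1})$ must be verified uniformly across every Morita class listed in Section \ref{s:tame}, which requires careful case-by-case bookkeeping.
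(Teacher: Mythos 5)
Your Stages 1--2 and the non-$3$-tube half of Stage 3 follow the paper's route (the paper's Claims 0--3 are exactly the case-by-case construction of $\overline{U'}$, the computation $\mathrm{End}_{kG}(\overline{U'})\cong k[t]/(t^{2^{n-2}-1})$, and the lift of $\overline{U'}$ to a $W$-lattice $U'$ with character $\sum_i\chi_{5,i}$ that you need before Lemma \ref{lem:quaternionargument}(ii) can be invoked; note these are nontrivial and block-by-block, but you flag that). One small omission: when $D$ is quaternion of order $8$ Lemma \ref{lem:biglist} gives $\mathrm{Ext}^1_{kG}(V,V)=0$, not $k$, so that case must be handled separately (there $R(G,V)\cong W=W[[t]]/(q_3(t))$ by a direct lifting argument from the decomposition matrix).

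The genuine gap is the last step of Stage 3 in the $3$-tube case. Your gluing over the fiber product correctly produces a surjection $R(G,V)\twoheadrightarrow W[[t]]/(t\,q_n(t),2\,q_n(t))$ whose mod-$2$ reduction is an isomorphism, but "surjective and an isomorphism mod $2$" does not imply "isomorphism" when the target is not $W$-flat: for instance $W[[t]]/(t\,q_n(t),4\,q_n(t))\twoheadrightarrow W[[t]]/(t\,q_n(t),2\,q_n(t))$ reduces to the identity mod $2$ yet has nonzero kernel, and nothing in your argument excludes $R(G,V)$ being such a larger ring, or even a $W$-flat ring like $W[[t]]/(q_n(t)(t-2\mu))$. (This inference is valid in case (i) only because $R'$ is $W$-free, which is how the paper promotes $\overline{\tau}$ to $\tau$ there.) To close the gap one needs an upper bound on $R(G,V)$, and this is where Proposition \ref{prop:3tubes} is actually used in the paper --- not, as in your Stage 1, to control mod-$2$ deformations. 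Concretely, the paper applies \cite[Lemma 2.3.3]{3sim} to write $R(G,V)\cong W[[t]]/(q_n(t)(t-2\mu),a2^mq_n(t))$ and observes that if $a=0$ (resp. $a=1$ with $m\ge 2$) then $\overline{U}$ would lift over $W$ (resp. over $W/2^mW$), contradicting $R(G,\overline{U})\cong k$; this forces $a=1$, $m=1$ and hence the stated answer. You should replace the final implication of Stage 3(ii) by this (or an equivalent) obstruction argument.
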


\begin{proof}
By Lemma \ref{lem:biglist}, $V$ is one of the modules in Lemma \ref{lem:biglist}, listed in
part (a) or part (b).

Suppose first that $D$ is quaternion of order 8. By Lemma \ref{lem:biglist}(a), 
$\mathrm{Ext}^1_{kG}(V,V)=0$, which implies that $R(G,V)$ is isomorphic to a quotient
algebra of $W$. Using the decomposition matrices in Figures \ref{fig:decompQ(3A)}, 
\ref{fig:decompQ(3B)} and \ref{fig:decompQ(3K)} together with \cite[Prop. (23.7)]{CR},
we see that in all cases $V$ can be lifted over $W$, which implies that 
$R(G,V)\cong W\cong W[[t]]/(q_3(t))$. 

For the remainder of the proof, assume that $D$ is not quaternion of order 8.
In particular, the Brauer character of $V$ is the restriction of $\chi_{5,1}$ to the
2-regular conjugacy classes of $G$.
By Lemma \ref{lem:biglist}(b), $\mathrm{Ext}^1_{kG}(V,V)\cong k$,
which implies that $R(G,V)$ is isomorphic to a quotient algebra of $W[[t]]$ but not
to a quotient algebra of $W$. Let $P_V$ be the projective $kG$-module cover of $V$.

\medskip

\textit{Claim $0$.} There exists an indecomposable quotient module $\overline{U'}$ of $P_V$ such that 
$\overline{U'}$ defines a lift of $V$ over $k[t]/(t^{2^{n-2}-1})$ and
$\overline{U'}/t^2\,\overline{U'}$ is an indecomposable $kG$-module.
Moreover, if $V$ corresponds to a 3-tube then the indecomposable $kG$-module
$\overline{U}$ of Definition \ref{def:mixedcase}, which is also an indecomposable quotient
module of $P_V$, defines a lift of $V$ over $k[t]/(t^{2^{n-2}})$ and
satisfies $\overline{U}/t^{2^{n-2}-1}\,\overline{U}\cong \overline{U'}$.

\medskip

\textit{Proof of Claim $0$.}
Suppose first that $B$ is Morita equivalent neither to $\SD(2\mathcal{B})_4(c)$, nor to 
$\Q(2\mathcal{B})_2(p,a,c)$, nor to $\SD(3\mathcal{C})_{2,2}$. Then $V$ is uniserial of length
$\ell\le 4$. Using the quiver and relations of the basic algebra of $B$, as provided in Section \ref{s:tame},
it follows that in all cases $P_V$ has an indecomposable quotient module
$\overline{U'}$ (resp. $\overline{U}$, provided it exists) which is uniserial of
length $\ell\,(2^{n-2}-1)$ (resp. $\ell\,2^{n-2}$). Moreover, $\overline{U'}$ (resp. $\overline{U}$)
can be pictured as having $2^{n-2}-1$ (resp. $2^{n-2}$) copies of $V$ stacked
on top of each other. This means that the action of $t$ on $\overline{U'}$ (resp. $\overline{U}$)
is given by an automorphism of $\overline{U'}$ (resp. $\overline{U}$) which is
unique up to multiplication by a non-zero scalar and which factors through $\mathrm{rad}^\ell(\overline{U'})$ 
(resp. $\mathrm{rad}^\ell(\overline{U})$). This implies Claim 0 in this case.

If $B$ is Morita equivalent to $\SD(2\mathcal{B})_4(c)$ or $\Q(2\mathcal{B})_2(p,a,c)$,
it follows from Lemma \ref{lem:biglist}(b)(ii) that $V$ is uniserial of length 2 with descending 
composition factors $T_u,T_v$, where $\{u,v\}=\{0,1\}$. Using the relations in Section
\ref{ss:quiver2B}, we see
that the projective cover $P_V=P_{T_u}$ 
has a unique submodule $K_u$ which is uniserial with descending composition factors $T_u,T_u$.
It follows that $\overline{U'}=P_{T_u}/K_u$, which can be
visualized as in (\ref{eq:lacy2B}), where $T_u$ (resp. $T_v$) occurs $2^{n-2}-1$ times.
This implies Claim 0 in this case, since $V$ does not correspond to a 3-tube, so
$\overline{U}$ does not exist.
\begin{equation}
\label{eq:lacy2B}
\overline{U'}=\vcenter{ \xymatrix @R=.1pc @C=.2pc{&T_u&\\
T_u\ar@{-}[rrd]&&T_v\ar@{-}[lld]\\T_u\ar@{-}[rrd]&&T_v\ar@{-}[lld]\\T_u&&T_v\\ \vdots&&\vdots\\
T_u\ar@{-}[rrd]&&T_v\ar@{-}[lld]\\T_u&&T_v\\&T_v&}}
\end{equation}

Finally, suppose $B$ is Morita equivalent to $\SD(3\mathcal{C})_{2,2}$. By Lemma \ref{lem:biglist}(b)(v),
$V$ is indecomposable with descending radical factors $T_0,T_1\oplus T_2$, or $T_1\oplus T_2,T_0$,
or $V$ is uniserial with descending composition factors $T_1,T_0,T_2$, or $T_2,T_0,T_1$.
Using the relations in Section \ref{ss:quiver3C}, we see that
there exists a unique uniserial $B$-module $T_{00}$, up to isomorphism, with descending composition
factors $T_0,T_0$. Also, for $u\in\{1,2\}$, there exists a unique uniserial $B$-module
$T_{u0u}$, up to isomorphism, with descending composition factors $T_u,T_0,T_u$. 
Moreover, the projective cover $P_{T_0}$ has a unique submodule isomorphic to $T_{00}$, and
for $u\in\{1,2\}$, the projective cover $P_{T_u}$ has a unique submodule isomorphic to $T_{u0u}$.
If the radical quotient (resp. socle) of $V$  is $T_0$, it follows that $\overline{U}$ is 
isomorphic to $\Omega^{-1}(T_{00})$ (resp. $\Omega(T_{00})$), and $\overline{U'}$ is 
isomorphic to $\mathrm{rad}^2(\overline{U})$. 
In particular, the action of $t$ on $\overline{U'}$ (resp. $\overline{U}$)
is given by an automorphism of $\overline{U'}$ (resp. $\overline{U}$) which is
unique up to multiplication by a non-zero scalar and which factors through $\mathrm{rad}^2(\overline{U'})$ 
(resp. $\mathrm{rad}^2(\overline{U})$).
If the radical quotient of $V$ is
$T_u$ for $u\in\{1,2\}$, it follows that $\overline{U}$ does not exist and $\overline{U'}$ is isomorphic to
$\Omega^{-1}(T_{u0u})$. This completes the proof of Claim 0.

\medskip

\textit{Claim $1$.} The universal mod 2 deformation ring $R(G,V)/2R(G,V)$ is 
isomorphic to $k[t]/(t^{2^{n-2}})$
(resp. $k[t]/(t^{2^{n-2}-1})$) and the universal mod 2 deformation of $V$ is isomorphic to
$\overline{U}$ (resp. $\overline{U'}$) if $V$ corresponds to a 3-tube (resp. does not
correspond to a 3-tube).

\medskip

\textit{Proof of Claim $1$.}
Suppose first that $B$ is Morita equivalent neither to $\SD(2\mathcal{B})_4(c)$, nor to 
$\Q(2\mathcal{B})_2(p,a,c)$, nor to $\SD(3\mathcal{C})_{2,2}$. As seen in the proof of Claim 0, 
$V$ is uniserial in this case. Moreover, it is straightforward to check that the projective cover of $V$ 
satisfies the hypotheses of \cite[Lemma 2.5]{3quat}. Hence we can use \cite[Lemma 2.5]{3quat} to 
prove Claim 1 in this case.

If $B$ is Morita equivalent to $\SD(2\mathcal{B})_4(c)$ or $\Q(2\mathcal{B})_2(p,a,c)$,
it follows from Lemma \ref{lem:biglist}(b)(ii) that $V$ is uniserial with descending 
composition factors $T_u,T_v$ where $\{u,v\}=\{0,1\}$. 
As seen in the proof of Claim 0, the projective cover $P_V=P_{T_u}$ has a unique submodule 
$K_u$ which is uniserial with descending composition factors $T_u,T_u$, and 
$\overline{U'}=P_{T_u}/K_u$ can be visualized as in (\ref{eq:lacy2B}).
Using the relations in Section \ref{ss:quiver2B}, we see that there is a unique $B$-submodule $V'$ of 
$\overline{U'}$ which is isomorphic to $V$. Moreover, if $T=\overline{U'}/V'$ then there is a
unique $B$-submodule $T'$ of $\overline{U'}$ such that there are $kG$-module isomorphisms
$\varphi:\overline{U'}/T'\to V$ and $\psi:\overline{U'}/V'=T\to T'$. Since
$\mathrm{Ext}^1_{kG}(\overline{U'},V)=0$ and since every surjective $kG$-module
homomorphism $\overline{U'}\to V$ must have kernel equal to $T$, we can argue as
in the proof of \cite[Lemma 2.5]{3quat} to show that $R(G,V)/2R(G,V)$ is isomorphic to 
$k[t]/(t^{2^{n-2}-1})$ and that the universal mod 2 deformation of $V$ is isomorphic to 
$\overline{U'}$. This implies Claim 1 in this case.

If $B$ is Morita equivalent to $\SD(3\mathcal{C})_{2,2}$, we use again
similar arguments as in the proof of \cite[Lemma 2.5]{3quat} to prove Claim 1.
The main point is that, as seen in the proof of Claim 0, 
$\overline{U'}$ and $\overline{U}$ are suitable submodules (resp. 
suitable quotient modules) of the projective cover $P_{T_0}$ if the radical quotient (resp. socle)
of $V$ is $T_0$, and $\overline{U'}$ is a suitable quotient module of $P_{T_u}$ if the
radical quotient of $V$ is $T_u$ for $u\in\{1,2\}$. This completes the proof of Claim 1.

\medskip

\textit{Claim $2$.} In all cases, $\mathrm{End}_{kG}(\overline{U'})\cong k[t]/(t^{2^{n-2}-1})$.

\medskip

\textit{Proof of Claim $2$.} Suppose first that $B$ is Morita equivalent neither to 
$\SD(2\mathcal{B})_4(c)$, nor to $\Q(2\mathcal{B})_2(p,a,c)$, nor to $\SD(3\mathcal{C})_{2,2}$.
As seen in the proof of Claim 0, 
$\overline{U'}$ is a uniserial module which can be pictured as having $2^{n-2}-1$ copies
of $V$ stacked on top of each other.
Since $\overline{U'}$ is isomorphic to a tree module in the sense of \cite{krause}, we 
can use the main result of \cite{krause} to prove Claim 2.

If $B$ is Morita equivalent to $\Lambda\in\{\SD(2\mathcal{B})_4(c),\Q(2\mathcal{B})_2(p,a,c),
\SD(3\mathcal{C})_{2,2}\}$, we use the relations in Section \ref{ss:quiver2B} and Section \ref{ss:quiver3C}
to analyze the possible $kG$-module endomorphisms of $\overline{U'}$, where we use the
description of $\overline{U'}$ as given in the proof of Claim 0.
Using explicit $k$-bases for the $\Lambda$-module $\overline{U'_\Lambda}$ corresponding to 
$\overline{U'}$ under the Morita equivalence, 
a straightforward linear algebra calculation shows that 
$\mathrm{End}_{\Lambda}(\overline{U'_\Lambda})\cong k[t]/(t^{2^{n-2}-1})$. This proves
Claim 2.

\medskip

\textit{Claim $3$.} In all cases, $\overline{U'}$ has a lift
$U'$ over $W$ such that the $F$-character of $U'$ is equal to
$$\sum_{\ell=2}^{n-1}\rho_\ell=\sum_{i=1}^{2^{n-2}-1}\chi_{5,i}$$
where $\rho_\ell$ is as in (\ref{eq:goodchar1}).

\medskip

\textit{Proof of Claim $3$.}
In all cases, we use the description of $\overline{U'}$ as given in the proof of Claim 0.
Suppose first that either $B$ is not Morita equivalent to any of the algebras in 
$$\{\SD(2\mathcal{A})_1(c),\Q(2\mathcal{A})(c),\SD(3\mathcal{A})_1,\Q(3\mathcal{A})_2\}$$
and $V$ is arbitrary 
or $B$ is Morita equivalent to $\SD(3\mathcal{A})_1$ and $V$ is such that its radical quotient 
(resp. socle) is isomorphic to $T_1$. Then Claim 3 follows by using the decomposition matrix of 
$B$ together with \cite[Prop. (23.7)]{CR}. 

Suppose next that $B$ is Morita equivalent to one of the algebras in
$\{\SD(2\mathcal{A})_1(c),\Q(2\mathcal{A})(c)\}$. If
the radical quotient (resp. socle) of $V$ is isomorphic to $T_1$, then 
$\Omega(\overline{U'})\cong Z_{001}$ (resp. $\Omega^{-1}(\overline{U'})\cong Z_{100}$) where
$$Z_{001}=
\begin{array}{c@{}c@{}c@{}c} &&&T_1\\T_1&&T_0\\&T_0\\&T_1
\end{array},\qquad \qquad Z_{100}=
\begin{array}{c@{}c@{}c@{}c}&T_1\\&T_0\\T_1&&T_0\\&&&T_1\end{array}$$
and $Z_{001}$ is a submodule of $P_{T_1}$ and $Z_{100}$ is a quotient module of 
$P_{T_1}$. 
Using \cite[Lemma 2.3.2]{3sim}, it follows that $Z_{001}$ and 
$Z_{100}$ each have a lift over $W$ such that the $F$-character of this lift is
$\chi_3+\chi_4+\chi_6$ (using the notation in Figure \ref{fig:decompSD(2A)}),
which implies Claim 3 in this case.

Finally, suppose that $B$ is Morita equivalent to $\SD(3\mathcal{A})_1$ and $u=2$ and $v=1$,
or $B$ is Morita equivalent to $\Q(3\mathcal{A})_2$ and $\{u,v\}=\{1,2\}$.
If $V$ is such that its radical quotient (resp. socle) is isomorphic to $T_u$ then 
$\Omega(\overline{U'})=Z_{0v0u}$ (resp. $\Omega^{-1}(\overline{U'})=Z_{u0v0})$ where
$$Z_{0v0u}=
\begin{array}{c@{}c@{}c@{}c@{}c} &&&&T_u\\&&&T_0\\T_u&&T_v\\&T_0\\&T_u
\end{array},\qquad \qquad Z_{u0v0}=
\begin{array}{c@{}c@{}c@{}c@{}c}&T_u\\&T_0\\T_u&&T_v\\&&&T_0\\
&&&&T_u\end{array}$$
and $Z_{0v0u}$ is a submodule of $P_{T_u}$ and $Z_{u0v0}$ is a quotient module of 
$P_{T_u}$.
Using \cite[Lemma 2.3.2]{3sim}, it follows that $Z_{0v0u}$ and 
$Z_{u0v0}$ each have a lift over $W$ such that the $F$-character of this lift is
$\chi_2+\chi_3+\chi_6$ if $B$ is Morita equivalent to $\SD(3\mathcal{A})_1$ 
(using the notation in Figure \ref{fig:decompSD(3A)}) and
the $F$-character of this lift is
$\chi_2+\chi_4+\chi_6$ (resp. $\chi_2+\chi_3+\chi_7$) if $B$ is Morita equivalent to 
$\Q(3\mathcal{A})_2$ and $u=1$ (resp. $u=2$) (using the notation in Figure 
\ref{fig:decompQ(3A)}). This completes the proof of Claim 3.

\medskip

\textit{Claim $4$.} The universal deformation ring $R(G,V)$ is as stated in Theorem \ref{thm:main}. 

\medskip

\textit{Proof of Claim $4$.}
In all cases, it follows by Lemma \ref{lem:quaternionargument}
that $U'$ from Claim 3 is an $R'G$-module. More precisely, there exists a $WG$-module 
endomorphism $\alpha$ of $U'$ such that $W[\alpha]\cong R'$.
By Claim 2, we have $\mathrm{End}_{kG}(\overline{U'})\cong k[t]/(t^{2^{n-2}-1})\cong R'/2R'$.
Moreover, since $\overline{U'}$ is a lift of $V$ over $k[t]/(t^{2^{n-2}-1})$, $\overline{U'}$
is free as a module for $\mathrm{End}_{kG}(\overline{U'})$ of rank 
$\mathrm{dim}_k\,V=\mathrm{deg}(\chi_{5,1})$.
Hence it follows by Lemma \ref{lem:quaternionargument} that 
$\mathrm{End}_{WG}(U')=W[\alpha]\cong R'$ and 
$U'$ is free as a module for $\mathrm{End}_{WG}(U')$.

In other words,  $U'$ defines a lift of $V$ over $R'$.
Let $\tau:R(G,V)\to R'$ be the unique continuous $W$-algebra homomorphism
relative to the lift defined by $U'$. Since $R'/(\mathfrak{m}_{R'}^2+2R')\cong k[t]/(t^2)$,
$\tau$ is surjective if and only if  $R'/(\mathfrak{m}_{R'}^2+2R')\otimes_{R'}U'$ does not define the
trivial lift of $V$ over $k[t]/(t^2)$. However,
$$R'/(\mathfrak{m}_{R'}^2+2R')\otimes_{R'}U'\cong U'/(\alpha^2(U')+2U')\cong 
\overline{U'}/\overline{\alpha}^2(\overline{U'})\cong \overline{U'}/t^2\,\overline{U'}$$
is an indecomposable $kG$-module, which implies that this does not define the
trivial lift of $V$ over $k[t]/(t^2)$. Hence $\tau$ is surjective and induces a surjective 
$k$-algebra homomorphism $$\overline{\tau}:R(G,V)/2R(G,V)\to R'/2R'.$$

Suppose first that $V$ does not correspond to a 3-tube. 
Then $R(G,V)/2R(G,V)$ and $R'/2R'$ are isomorphic
and finite dimensional over $k$, which implies that $\overline{\tau}$ is an isomorphism. 
Because $R'$ is a free $W$-module of finite rank, 
it follows that $\tau$ is an isomorphism.
By Lemma \ref{lem:semidihedraltrick}, $R'$ is isomorphic to a subquotient ring of $WD$.
This proves Claim 4 and
completes the proof of Theorem \ref{thm:main} if $V$ does not correspond to a 3-tube.

Suppose next that $V$ corresponds to a 3-tube. Then the universal mod 2 deformation ring
$R(G,V)/2R(G,V)$ is isomorphic to $k[t]/(2^{n-2})$ and the universal mod 2 deformation is
isomorphic to $\overline{U}$. 
By \cite[Lemma 2.3.3]{3sim}, it follows that $R(G,V)\cong W [[t]]/(q_n(t)(t - 2\mu), a 2^mq_n(t))$ 
for certain $\mu\in W$, $a\in \{0, 1\}$ and $0 < m \in \mathbb{Z}$. If $a = 0$ then 
$R(G,V)\cong W [[t]]/(q_n(t)(t -2\mu))$ is free over $W$. If $a = 1$ then 
$R(G,V)/2^m\,R(G,V) \cong(W/2^mW )[[t]]/(q_n(t)(t - 2\mu))$ is free over 
$W/2^mW$. Therefore it follows that if $a = 0$ (resp. $a = 1$), then there is a lift of 
$\overline{U}$, when regarded as a $kG$-module, over $W$ (resp. over 
$W/2^mW$). But by Proposition \ref{prop:3tubes} we have
$R(G, \overline{U}) \cong k$,
which means we must have $a = 1$ and $m = 1$. Since $V$ corresponds to a 3-tube, 
$D$ is dihedral or semidihedral. Hence by Lemma \ref{lem:semidihedraltrick}, 
the ring $W [[t]]/(t\, q_n(t), 2\, q_n(t))$ is isomorphic to a subquotient ring of $WD$.
This proves Claim 4 and
completes the proof of Theorem \ref{thm:main} if $V$ corresponds to a 3-tube. 
\end{proof}


\vspace{.5cm}\noindent
Frauke M. Bleher\\
Department of Mathematics\\
University of Iowa\\
Iowa City, IA 52242-1419\\
U.S.A.\\[.1cm]
{frauke-bleher@uiowa.edu}

\newpage
\section*{Appendix: Decomposition matrices for the algebras in Section \ref{s:tame}}

\begin{figure}[ht] \caption{\label{fig:decompD(2A)} The decomposition matrix for blocks 
of type  $\D(2\mathcal{A})$ or $\SD(2\mathcal{A})_2(c)$.}
$$\begin{array}{ccc}
&\begin{array}{c@{}c}\varphi_0\,&\,\varphi_1\end{array}\\[1ex]
\begin{array}{c}\chi_1\\ \chi_2\\ \chi_3 \\ \chi_4\\ \chi_{5,i}\end{array} &
\left[\begin{array}{cc}1&0\\1&0\\1&1\\1&1\\2&1\end{array}\right]
&\begin{array}{c}\\ \\ \\ \\1\le i\le 2^{n-2}-1\end{array}
\end{array}$$
\end{figure}

\begin{figure}[ht] \caption{\label{fig:decompSD(2A)} The decomposition matrix for blocks 
of type $\SD(2\mathcal{A})_1(c)$ or $\Q(2\mathcal{A})(c)$.}
$$\begin{array}{ccc}
&\begin{array}{c@{}c}\varphi_0\,&\,\varphi_1\end{array}\\[1ex]
\begin{array}{c}\chi_1\\ \chi_2\\ \chi_3 \\ \chi_4\\ \chi_{5,i}\\ \chi_6\end{array} &
\left[\begin{array}{cc}1&0\\1&0\\1&1\\1&1\\2&1\\0&1\end{array}\right]
&\begin{array}{c}\\ \\ \\ 1\le i\le 2^{n-2}-1\\ \end{array}
\end{array}$$
\end{figure}

\begin{figure}[ht] \caption{\label{fig:decompD(2B)} The decomposition matrix for blocks 
of type $\D(2\mathcal{B})$ or $\SD(2\mathcal{B})_1(c)$.}
$$\begin{array}{ccc}
&\begin{array}{c@{}c}\varphi_0\,&\,\varphi_1\end{array}\\[1ex]
\begin{array}{c}\chi_1\\ \chi_2\\ \chi_3 \\ \chi_4\\ \chi_{5,i}\end{array} &
\left[\begin{array}{cc}1&0\\1&0\\1&1\\1&1\\0&1\end{array}\right]
&\begin{array}{c}\\ \\ \\ \\1\le i\le 2^{n-2}-1\end{array}
\end{array}$$
\end{figure}

\begin{figure}[ht] \caption{\label{fig:decompSD(2B)} The decomposition matrix for blocks 
of type $\SD(2\mathcal{B})_2(c)$ or $\Q(2\mathcal{B})_1(c)$.}
$$\begin{array}{ccc}
&\begin{array}{c@{}c}\varphi_0\,&\,\varphi_1\end{array}\\[1ex]
\begin{array}{c}\chi_1\\ \chi_2\\ \chi_3 \\ \chi_4\\ \chi_{5,i}\\ \chi_6\end{array} &
\left[\begin{array}{cc}1&0\\1&0\\1&1\\1&1\\0&1\\2&1\end{array}\right]
&\begin{array}{c}\\ \\ \\ 1\le i\le 2^{n-2}-1\\\end{array}
\end{array}$$
\end{figure}

\begin{figure}[ht] \caption{\label{fig:decompSD(2B)weird} The decomposition matrix for blocks 
of type $\SD(2\mathcal{B})_4(c)$ or $\Q(2\mathcal{B})_2(p,a,c)$.}
$$\begin{array}{ccc}
&\begin{array}{c@{}c}\varphi_0\,&\,\varphi_1\end{array}\\[1ex]
\begin{array}{c}\chi_1\\ \chi_2\\ \chi_3 \\ \chi_4\\ \chi_{5,i}\end{array} &
\left[\begin{array}{cc}1&0\\1&0\\0&1\\0&1\\1&1\end{array}\right]
&\begin{array}{c}\\ \\ \\ \\1\le i\le 2^{n-2}-1\end{array}
\end{array}$$
\end{figure}

\begin{figure}[ht] \caption{\label{fig:decompD(3A)} The decomposition matrix for blocks of type 
$\D(3\mathcal{A})_1$.}
$$\begin{array}{ccc}
&\begin{array}{c@{}c@{}c}\varphi_0\,&\,\varphi_1\,&\,\varphi_2\end{array}\\[1ex]
\begin{array}{c}\chi_1\\ \chi_2\\ \chi_3 \\ \chi_4\\ \chi_{5,i}\end{array} &
\left[\begin{array}{ccc}1&0&0\\1&1&1\\1&0&1\\1&1&0\\2&1&1\end{array}\right]
&\begin{array}{c}\\ \\ \\ \\1\le i\le 2^{n-2}-1\end{array}
\end{array}$$
\end{figure}

\begin{figure}[ht] \caption{\label{fig:decompSD(3A)} The decomposition matrix for blocks
of type $\SD(3\mathcal{A})_1$.}
$$\begin{array}{ccc}
&\begin{array}{c@{}c@{}c}\varphi_0\,&\,\varphi_1\,&\,\varphi_2\end{array}\\[1ex]
\begin{array}{c}\chi_1\\ \chi_2\\ \chi_3 \\ \chi_4\\ \chi_{5,i}\\ \chi_6\end{array} &
\left[\begin{array}{ccc}1&0&0\\1&1&1\\1&0&1\\1&1&0\\2&1&1\\0&0&1\end{array}\right]
&\begin{array}{c}\\ \\ \\ 1\le i\le 2^{n-2}-1\\\end{array}
\end{array}$$
\end{figure}

\begin{figure}[ht] \caption{\label{fig:decompQ(3A)} The decomposition matrix for blocks
of type  $\Q(3\mathcal{A})_2$.}
$$\begin{array}{ccc}
&\begin{array}{c@{}c@{}c}\varphi_0\,&\,\varphi_1\,&\,\varphi_2\end{array}\\[1ex]
\begin{array}{c}\chi_1\\ \chi_2\\ \chi_3 \\ \chi_4\\ \chi_{5,i}\\ \chi_6\\ \chi_7\end{array} &
\left[\begin{array}{ccc}1&0&0\\1&1&1\\1&0&1\\1&1&0\\2&1&1\\0&1&0\\0&0&1\end{array}\right]
&\begin{array}{c}\\ \\ \\ 1\le i\le 2^{n-2}-1\\ \\ \end{array}
\end{array}$$
\end{figure}

\begin{figure}[ht] \caption{\label{fig:decompD(3B)} The decomposition matrix for blocks 
of type $\D(3\mathcal{B})_1$.}
$$\begin{array}{ccc}
&\begin{array}{c@{}c@{}c}\varphi_0\,&\,\varphi_1\,&\,\varphi_2\end{array}\\[1ex]
\begin{array}{c}\chi_1\\ \chi_2\\ \chi_3 \\ \chi_4\\ \chi_{5,i}\end{array} &
\left[\begin{array}{ccc}1&0&0\\1&1&0\\1&0&1\\1&1&1\\0&1&0\end{array}\right]
&\begin{array}{c}\\ \\ \\ \\1\le i\le 2^{n-2}-1\end{array}
\end{array}$$
\end{figure}

\begin{figure}[ht] \caption{\label{fig:decompSD(3B)1} The decomposition matrix for blocks
of type $\SD(3\mathcal{B})_1$ or $\SD(3\mathcal{D})$.}
$$\begin{array}{ccc}
&\begin{array}{c@{}c@{}c}\varphi_0\,&\,\varphi_1\,&\,\varphi_2\end{array}\\[1ex]
\begin{array}{c}\chi_1\\ \chi_2\\ \chi_3 \\ \chi_4\\ \chi_{5,i}\\ \chi_6\end{array} &
\left[\begin{array}{ccc}1&0&0\\1&1&0\\1&0&1\\1&1&1\\0&1&0\\0&0&1\end{array}\right]
&\begin{array}{c}\\ \\ \\ 1\le i\le 2^{n-2}-1\\\end{array}
\end{array}$$
\end{figure}

\begin{figure}[ht] \caption{\label{fig:decompSD(3B)2} The decomposition matrix for blocks 
of type $\SD(3\mathcal{B})_2$.}
$$\begin{array}{ccc}
&\begin{array}{c@{}c@{}c}\varphi_0\,&\,\varphi_1\,&\,\varphi_2\end{array}\\[1ex]
\begin{array}{c}\chi_1\\ \chi_2\\ \chi_3 \\ \chi_4\\ \chi_{5,i}\\ \chi_6\end{array} &
\left[\begin{array}{ccc}1&0&0\\1&1&0\\1&1&1\\1&0&1\\0&1&0\\2&1&1\end{array}\right]
&\begin{array}{c}\\ \\ \\ 1\le i\le 2^{n-2}-1\\\end{array}
\end{array}$$
\end{figure}

\begin{figure}[ht] \caption{\label{fig:decompQ(3B)} The decomposition matrix for blocks 
of type $\Q(3\mathcal{B})$.}
$$\begin{array}{ccc}
&\begin{array}{c@{}c@{}c}\varphi_0\,&\,\varphi_1\,&\,\varphi_2\end{array}\\[1ex]
\begin{array}{c}\chi_1\\ \chi_2\\ \chi_3 \\ \chi_4\\ \chi_{5,i}\\ \chi_6\\ \chi_7\end{array} &
\left[\begin{array}{ccc}1&0&0\\1&1&0\\1&1&1\\1&0&1\\0&1&0\\0&0&1\\2&1&1\end{array}\right]
&\begin{array}{c}\\ \\ \\ 1\le i\le 2^{n-2}-1\\ \\ \end{array}
\end{array}$$
\end{figure}

\begin{figure}[ht] \caption{\label{fig:decompSD(3C)1} The decomposition matrix for blocks
of type $\SD(3\mathcal{C})_{2,1}$.}
$$\begin{array}{ccc}
&\begin{array}{c@{}c@{}c}\varphi_0\,&\,\varphi_1\,&\,\varphi_2\end{array}\\[1ex]
\begin{array}{c}\chi_1\\ \chi_2\\ \chi_3 \\ \chi_4\\ \chi_{5,i}\\ \chi_6\end{array} &
\left[\begin{array}{ccc}0&1&0\\1&1&0\\1&0&1\\0&0&1\\1&0&0\\1&1&1\end{array}\right]
&\begin{array}{c}\\ \\ \\ 1\le i\le 2^{n-2}-1\\ \end{array}
\end{array}$$
\end{figure}

\begin{figure}[ht] \caption{\label{fig:decompSD(3C)2} The decomposition matrix for blocks
of type $\SD(3\mathcal{C})_{2,2}$.}
$$\begin{array}{ccc}
&\begin{array}{c@{}c@{}c}\varphi_0\,&\,\varphi_1\,&\,\varphi_2\end{array}\\[1ex]
\begin{array}{c}\chi_1\\ \chi_2\\ \chi_3 \\ \chi_4\\ \chi_{5,i}\\ \chi_6\end{array} &
\left[\begin{array}{ccc}0&1&0\\1&0&1\\1&1&0\\0&0&1\\1&1&1\\1&0&0\end{array}\right]
&\begin{array}{c}\\ \\ \\ 1\le i\le 2^{n-2}-1\\ \end{array}
\end{array}$$
\end{figure}

\begin{figure}[ht] \caption{\label{fig:decompSD(3H)1} The decomposition matrix for blocks
of type $\SD(3\mathcal{H})_1$.}
$$\begin{array}{ccc}
&\begin{array}{c@{}c@{}c}\varphi_0\,&\,\varphi_1\,&\,\varphi_2\end{array}\\[1ex]
\begin{array}{c}\chi_1\\ \chi_2\\ \chi_3 \\ \chi_4\\ \chi_{5,i}\\ \chi_6\end{array} &
\left[\begin{array}{ccc}1&0&0\\1&1&1\\0&1&0\\0&0&1\\0&1&1\\1&1&0\end{array}\right]
&\begin{array}{c}\\ \\ \\ 1\le i\le 2^{n-2}-1\\\end{array}
\end{array}$$
\end{figure}

\begin{figure}[ht] \caption{\label{fig:decompSD(3H)2} The decomposition matrix for blocks
of type $\SD(3\mathcal{H})_2$.}
$$\begin{array}{ccc}
&\begin{array}{c@{}c@{}c}\varphi_0\,&\,\varphi_1\,&\,\varphi_2\end{array}\\[1ex]
\begin{array}{c}\chi_1\\ \chi_2\\ \chi_3 \\ \chi_4\\ \chi_{5,i}\\ \chi_6\end{array} &
\left[\begin{array}{ccc}1&0&0\\0&1&0\\1&1&1\\0&0&1\\1&1&0\\0&1&1\end{array}\right]
&\begin{array}{c}\\ \\ \\ 1\le i\le 2^{n-2}-1\\\end{array}
\end{array}$$
\end{figure}

\begin{figure}[ht] \caption{\label{fig:decompD(3K)} The decomposition matrix for blocks
of type $\D(3\mathcal{K})$.}
$$\begin{array}{ccc}
&\begin{array}{c@{}c@{}c}\varphi_0\,&\,\varphi_1\,&\,\varphi_2\end{array}\\[1ex]
\begin{array}{c}\chi_1\\ \chi_2\\ \chi_3 \\ \chi_4\\ \chi_{5,i}\end{array} &
\left[\begin{array}{ccc}1&0&0\\1&1&1\\0&1&0\\0&0&1\\0&1&1\end{array}\right]
&\begin{array}{c}\\ \\ \\ \\1\le i\le 2^{n-2}-1\end{array}
\end{array}$$
\end{figure}

\begin{figure}[ht] \caption{\label{fig:decompQ(3K)} The decomposition matrix for blocks 
of type $\Q(3\mathcal{K})$.}
$$\begin{array}{ccc}
&\begin{array}{c@{}c@{}c}\varphi_0\,&\,\varphi_1\,&\,\varphi_2\end{array}\\[1ex]
\begin{array}{c}\chi_1\\ \chi_2\\ \chi_3 \\ \chi_4\\ \chi_{5,i}\\ \chi_6\\ \chi_7\end{array} &
\left[\begin{array}{ccc}1&0&0\\1&1&1\\0&1&0\\0&0&1\\0&1&1\\1&0&1\\1&1&0\end{array}\right]
&\begin{array}{c}\\ \\ \\ 1\le i\le 2^{n-2}-1\\ \\\end{array}
\end{array}$$
\end{figure}


\begin{thebibliography}{88}

\bibitem{alp} J.~L.~Alperin, Local representation theory. Modular representations as an introduction to the local representation theory of finite groups. Cambridge Studies in Advanced Mathematics, vol. 11, Cambridge University Press, Cambridge, 1986.

\bibitem{diloc} F.~M.~Bleher, Universal deformation rings for dihedral $2$-groups. 
J. London Math Soc. 79, (2009), 225--237. 

\bibitem{3sim} F.~M.~Bleher, Universal deformation rings and dihedral defect groups. 
Trans. Amer. Math. Soc. 361 (2009), 3661--3705. 

\bibitem{3quat} F.~M.~Bleher, Universal deformation rings and generalized quaternion defect 
groups. Adv. Math. 225 (2010), 1499--1522.


\bibitem{bc} F.~M.~Bleher and T.~Chinburg, Universal deformation rings and cyclic blocks. Math. Ann. 318 (2000), 805--836.

\bibitem{bc4.9} F.~M.~Bleher and T.~Chinburg, Universal deformation rings need not be
complete intersections. C. R. Math. Acad. Sci. Paris 342 (2006),  229--232.

\bibitem{bc5} F.~M.~Bleher and T.~Chinburg, Universal deformation rings need not be complete intersections. Math. Ann. 337 (2007),  739--767.

\bibitem{bcs} F.~M.~Bleher, T.~Chinburg and B.~de Smit, Inverse Problems for deformation rings.
Accepted by Trans. Amer. Math. Soc., 2012, {\tt arXiv:1012.1290}


\bibitem{2sim} F.~M.~Bleher, G.~Llosent and J.~B.~Schaefer, Universal deformation rings and 
dihedral blocks with two simple modules.  J. Algebra 345 (2011), 49--71. 

\bibitem{bd} V.~M.~Bondarenko and J.~A.~Drozd, The representation type of finite groups. 
Zap. Nauchn. Sem. Leningrad. Otdel. Mat. Inst. Steklov. (LOMI) 71 (1977), 24--41. 
English translation: J. Soviet Math. 20 (1982), 2515--2528.

\bibitem{brauerordinaryandmodular} R.~Brauer, On the Connection Between the Ordinary and 
The Modular Characters of Groups of Finite Order. Ann. Math. (2) (1942), 926--935.

\bibitem{brauerdarst2} R.~Brauer, Zur Darstellungstheorie der Gruppen endlicher Ordnung. II.
Math. Zeitschr. 72 (1959), 25--46.


\bibitem{brIII} R.~Brauer, Some applications of the theory of blocks of characters of finite 
groups. III.  J. Algebra  3  (1966), 225--255.

\bibitem{brblocks} R.~Brauer, On blocks and sections in finite groups. II. Amer. J. Math.
90 (1968), 895--925.

\bibitem{brIV} R.~Brauer, Some applications of the theory of blocks of characters of finite 
groups. IV.  J. Algebra 17 (1971), 489--521.

\bibitem{brauer2} R.~Brauer, On $2$-blocks with dihedral defect groups. Symposia Mathematica, vol. XIII 
(Convegno di Gruppi e loro Rappresentazioni, INDAM, Rome, 1972), pp. 367--393, Academic Press, 
London, 1974. 

\bibitem{br} S.~Brenner, Modular representations of $p$ groups. J. Algebra 15 (1970) 89--102. 

\bibitem{buri} M.~C.~R.~Butler and C.~M.~Ringel, Auslander-Reiten sequences with few middle terms and 
applications to string algebras. Comm. Algebra 15 (1987), 145--179. 


\bibitem{carl2} J.~F.~Carlson and J.~Th\'{e}venaz, The classification of endo-trivial modules.  
Invent. Math.  158  (2004),  389--411.

\bibitem{carl1.5} J.~F.~Carlson and J.~Th\'{e}venaz, The classification of torsion endo-trivial modules.  
Ann. of Math. (2)  162  (2005),  823--883.

\bibitem{flach} T.~Chinburg, Can deformation rings of group representations not be 
	local complete intersections? In: Problems from the Workshop on Automorphisms of Curves. 
	Edited by Gunther Cornelissen and Frans Oort, with contributions by I. Bouw, T. Chinburg, 
	Cornelissen, C. Gasbarri, D. Glass, C. Lehr, M. Matignon, Oort, R. Pries and S. Wewers.
	Rend. Sem. Mat. Univ. Padova 113 (2005), 129--177.


\bibitem{CR} C.~W.~Curtis and I.~Reiner,  Methods of representation theory. Vols. I and II. With applications to finite groups and orders. John Wiley \& Sons, Inc., New York, 1981 and 1987. 

\bibitem{lendesmit} B.~de Smit and H.~W.~Lenstra, Explicit construction of universal deformation rings. In: Modular Forms and Fermat's Last Theorem (Boston,  1995), Springer-Verlag, Berlin-Heidelberg-New York, 1997, pp. 313--326.

\bibitem{eisele} F.~Eisele, $p$-adic lifting problems and derived equivalences.
 J. Algebra 356 (2012), 90--114. 
 
 \bibitem{erdklein} K.~Erdmann, Blocks whose defect groups are Klein four groups: a correction. 
 J. Algebra 76 (1982), 505--518.

\bibitem{erdsemid} K.~Erdmann, Algebras and semidihedral defect groups I. Proc. London
Math. Soc. (3) 57 (1988), 109--150.

\bibitem{erd} K.~Erdmann, Blocks of Tame Representation Type and Related Algebras. Lecture Notes in Mathematics, vol. 1428, Springer-Verlag, Berlin-Heidelberg-New York, 1990.

\bibitem{fong} P.~Fong, A note on splitting fields of representations of finite groups.  
Illinois J. Math.  7  (1963), 515--520.

\bibitem{hi} D.~Higman, Indecomposable representations at characteristic $p$.
Duke Math. J. 21 (1954), 377--381. 

\bibitem{holm} T.~Holm, Derived equivalence classification of algebras of dihedral, semidihedral, 
and quaternion type. J. Algebra 211 (1999), 159--205.

\bibitem{hup} B.~Huppert, Endliche Gruppen. I. Die Grundlehren der Mathematischen Wissenschaften, Band 134, Springer-Verlag, Berlin-New York, 1967.


\bibitem{krause} H.~Krause, Maps between tree and band modules. J. Algebra 137 (1991), 186--194.

\bibitem{linckel} M.~Linckelmann, A derived equivalence for blocks with dihedral defect groups. J. 
Algebra 164 (1994), 244--255.

\bibitem{linckel1} M.~Linckelmann, The source algebras of blocks with a Klein four defect group, J.
Algebra 167 (1994), 821--854.

\bibitem{linckelfuse} M.~Linckelmann, Introduction to fusion systems. In: Group representation 
theory, EPFL Press, Lausanne, 2007, pp. 79--113.

       
\bibitem{maz1} B.~Mazur, Deforming Galois representations. In: Galois groups over $\mathbb{Q}$ (Berkeley,  1987), Springer-Verlag, Berlin-Heidelberg-New York, 1989, pp. 385--437.
        
\bibitem{olsson} J.~B.~Olsson, On $2$-blocks with quaternion and quasidihedral defect groups.
J. Algebra 36 (1975), 212--241.   


\end{thebibliography}
\end{document}